\newtheorem{theorem}{Theorem}[section]
\newtheorem{prop}{Proposition}[section]
\newtheorem{lemma}[theorem]{Lemma}
\newtheorem{cor}[theorem]{Corollary}
\theoremstyle{definition}
\newtheorem{definition}[theorem]{Definition}
\theoremstyle{remark}
\newtheorem{remark}[theorem]{Remark}
\numberwithin{equation}{section}
\newcommand{\abs}[1]{\left\lvert#1\right\rvert}
\newcommand{\ds}{\displaystyle}
\newcommand{\CE}{\mathcal{E}}
\newcommand{\CX}{\mathcal{X}}
\newcommand{\ff}{\mathfrak{f}}
\newcommand{\fm}{\mathfrak{m}}
\newcommand{\fp}{\mathfrak{p}}
\newcommand{\fP}{\mathfrak{P}}
\newcommand{\fq}{\mathfrak{q}}
\newcommand{\fG}{\mathfrak{G}}
\newcommand{\zpz}{\mathbb{Z}/p\mathbb{Z}}
\def\Gal{\mathop{\operatorfont Gal}\nolimits}
\begin{document}

\title[On lower bounds for the Ihara constants $A(2)$ and $A(3)$]
{On lower bounds for the Ihara constants $A(2)$ and $A(3)$}


\author{Iwan Duursma}
\address{Department of Mathematics \\
University of Illinois at Urbana-Champaign \\
273 Altgeld Hall, MC-382 \\
1409 W. Green Street \\
Urbana, Illinois 61801, USA}
\email{duursma@math.uiuc.edu}

\author{Kit-Ho Mak}
\address{Department of Mathematics \\
University of Illinois at Urbana-Champaign \\
273 Altgeld Hall, MC-382 \\
1409 W. Green Street \\
Urbana, Illinois 61801, USA}
\email{mak4@illinois.edu}

\subjclass[2010]{Primary 11G20; Secondary 14G15, 14G05}
\keywords{curves with many points, asymptotic bound, Ihara constant}

\begin{abstract}
Let $\mathcal{X}$ be a curve over $\mathbb{F}_q$ and let $N(\mathcal{X})$, $g(\mathcal{X})$ be its number of rational points and genus respectively. The Ihara constant $A(q)$ is defined by $A(q)=\limsup_{g(\mathcal{X})\rightarrow\infty}N(\mathcal{X})/g(\mathcal{X})$. In this paper, we employ a variant of Serre's class field tower method to obtain an improvement of the best known lower bounds on $A(2)$ and $A(3)$.
\end{abstract}

\maketitle

\section{Introduction}

Let $p$ be a prime and let $q=p^e$ be a prime power. Let $\CX$ be a projective, nonsingular, geometrically irreducible curve (hereafter referred to as \textit{curve}) of genus $g$. It is well-known that the Weil bound
\begin{equation*}
\#\CX(\mathbb{F}_q) \leq q+1+2g\sqrt{q}
\end{equation*}
is not sharp if $g$ is large compared to $q$. Put
\begin{equation*}
N_q(g):=\max{\#\CX(\mathbb{F}_q)},
\end{equation*}
where the maximum is taken over all curves $\CX/\mathbb{F}_q$ with genus $g$. The \textit{Ihara constant} is defined by
\begin{equation*}
A(q):=\limsup_{g\rightarrow\infty}\frac{N_g(q)}{g}.
\end{equation*}
This is a measure of the asymptotic behaviour of the number of rational points on curves over $\mathbb{F}_q$ when the genus becomes large. For any $q$, we have $A(q) \leq \sqrt{q}-1$ (see \cite{DrVl83}), and if $q$ is a square we have (see \cite{Iha81,TVZ82}) $A(q)=\sqrt{q}-1$. For a nonsquare $q$ we know much less. Serre proved that $A(q)\geq c\log{q}$ for some absolute constant $c$ by the class field tower method (see \cite{Ser85} and \cite[Chapter IX]{CaFr10}), and Temkine \cite{Tem01}, using the same method, improved this to $A(q^n)=(c'n\log{q})^2/(\log{n}+\log{q})$ with $c'$ an effectively computable constant. Variations of Serre's result are also obtained in \cite{NiXi99} and \cite{LiMa02}. Recently, using the recursive tower method, Garcia, Stichtenoth, Bassa and Beelen \cite{GSBB12} obtained very good lower bounds for $A(q^n)$ when $n>1$ is odd.

In this paper, we develop another variation of Serre's method, based on Kuhnt's work in his PhD thesis \cite{Kuh02}, for constructing class field towers. This yields the following lower bound for $A(q)$.
\begin{theorem} \label{thmtower}
Let $k$ be a function field of genus $g$ over $\mathbb{F}_q$ of characteristic $p$, where $q=p^e$. Let $S$ be a finite set of places, and for each $\fp \in S$, let $f_{\fp}$ denotes its degree and let $\nu_{\fp}$ be a positive integer. Form the conductor $\fm=\sum_{\fp\in S}\nu_{\fp}\fp$.
For a set $T$ of $t>0$ rational places disjoint from $S$ with $t \leq \sum_{\fp\in S}ef_{\fp}(\nu_{\fp}-1-[(\nu_{\fp}-1)/p])$,
let $K=k_{\fm}^T$ be the ray class field with conductor $\fm$ so that all places in $T$ split completely. If $t$ satisfies the inequality
\begin{multline}\label{ineqr-d}
(1+\sum_{\fp\in S}ef_{\fp}(\nu_{\fp}-1-[(\nu_{\fp}-1)/p])-t)^2 \\
- 2\sum_{\fp\in S}ef_{\fp}(\nu_{\fp}-1)(ef_{\fp}(\nu_{\fp}-1)+1) - 4\sum_{\fp\in S}ef_{\fp}(\nu_{\fp}-1-[(\nu_{\fp}-1)/p]) \geq 0,
\end{multline}
then the $(T,p)$-class field tower $L/K$ is infinite, and we have
\begin{equation*}
A(q) \geq \frac{t}{g-1+\frac{1}{2[K:k]}\sum_{\chi}\deg\ff_\chi},
\end{equation*}
where $\chi$ runs through the characters of $\Gal(K/k)$.
\end{theorem}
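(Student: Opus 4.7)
The plan is to combine three standard ingredients of the class-field-tower method. The first is a Golod--Shafarevich-type criterion that forces the $(T,p)$-class field tower $L/K$ to be infinite once the numerical inequality \eqref{ineqr-d} holds. The second is the conductor--discriminant formula, which computes the genus of $K$ in terms of the conductors of the characters of $\Gal(K/k)$. The third is a counting argument in the tower: every finite intermediate layer $F$ has at least $t[F:k]$ rational places and a controlled genus, and passing up the tower yields the asserted lower bound on $A(q)$.

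For the infinitude of the tower, let $G = \Gal(L/K)$ and view $G$ as a pro-$p$ group with generator rank $d(G)$ and relation rank $r(G)$. Golod--Shafarevich asserts that $G$ is infinite whenever $d(G)^2 \geq 4 r(G)$, so the task reduces to proving the two bounds whose substitution into this inequality gives \eqref{ineqr-d}. I would bound $d(G)$ below by the $p$-rank of the $T$-ray class group of $K$ with conductor $\fm$. A dimension count using the filtration of principal units at each $\fp \in S$ contributes $ef_\fp(\nu_\fp - 1)$ generators, from which one removes $ef_\fp[(\nu_\fp - 1)/p]$ for $p$-th powers and $t$ for the splitting constraints at the places of $T$, yielding
\[
d(G) \;\geq\; 1 + \sum_{\fp \in S} ef_\fp\bigl(\nu_\fp - 1 - [(\nu_\fp - 1)/p]\bigr) - t.
\]
The upper bound for $r(G)$ is more subtle; following Kuhnt, one analyzes relations in the pro-$p$ completion that come from local units at each ramified place, obtaining the bound appearing as the second and third terms of \eqref{ineqr-d}.

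For the point count and genus, I would apply the conductor--discriminant formula to get $2g_K - 2 = [K:k](2g - 2) + \sum_\chi \deg \ff_\chi$, hence
\[
g_K - 1 = [K:k](g - 1) + \tfrac{1}{2} \sum_\chi \deg \ff_\chi.
\]
Since $L/K$ is unramified, Riemann--Hurwitz gives $g_F - 1 = (g_K - 1)[F:K]$ for each finite layer $F$. The $t$ rational places of $T$ split completely in $K/k$ and then in $F/K$, so $N(F) \geq t[F:k]$. Letting $F$ range through the infinite tower,
\[
A(q) \;\geq\; \frac{t\,[K:k]}{g_K - 1} \;=\; \frac{t}{g - 1 + \frac{1}{2[K:k]}\sum_\chi \deg \ff_\chi},
\]
which is the claimed bound.

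The main obstacle will be the relation-rank bound, since controlling local relations at the wildly ramified places of $S$ requires a detailed analysis of the higher unit filtration $U_\fp^{(i)}/U_\fp^{(i+1)}$, and the combinatorial correction $[(\nu_\fp - 1)/p]$ in \eqref{ineqr-d} is precisely what isolates the part of this filtration that survives modulo $p$-th powers. This is where Kuhnt's refinement of Serre's method enters; once the local input is in place, the remainder is essentially conductor--discriminant bookkeeping combined with completely-split point counting in an unramified tower.
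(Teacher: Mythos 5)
Your proposal gets the second half right---the conductor--discriminant computation of $g(K)$ and the count $N(F)\geq t[F:k]$ in the unramified tower---but the first half contains a genuine conceptual gap: you identify the wrong Galois group, and as a consequence the two bounds you write down are not bounds for the same object.

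You set $G=\Gal(L/K)$, which is the \emph{classical} Serre group (pro-$p$ Galois group of the unramified $T_K$-class field tower over $K$). The generator rank of that group is the $p$-rank of the $T_K$-divisor class group of $K$, a quantity that is genuinely hard to control and has nothing directly to do with the filtration of principal units at the places of $S$ in $k$. The bound
\[
1 + \sum_{\fp \in S} ef_\fp\bigl(\nu_\fp - 1 - [(\nu_\fp - 1)/p]\bigr) - t
\]
that you derive from the unit filtration is a lower bound for $d_p(\Gal(K/k))$ (the $p$-rank of the ray class group of $k$), not for $d_p(\Gal(L/K))$. Similarly, the relation-rank estimate you attribute to Kuhnt (the sum of binomial terms $\binom{ef_\fp(\nu_\fp-1)+1}{2}$ appearing in~\eqref{ineqr-d}) is Theorem~8.10 of \cite{Kuh02}, which is a statement about the group $\Gal(L/k)$ and bounds $r_p - d_p$ by \emph{local} relation ranks at the wildly ramified places of $K/k$; it has no counterpart for $\Gal(L/K)$, since $L/K$ is unramified. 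So as written, your two estimates cannot be fed into Golod--Shafarevich for the same group.

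The paper's central move, following Kuhnt, is to take $G = \Gal(L/k)$ for the entire tower $L/K/k$. Then both inequalities become simultaneously available: $\Gal(K/k)$ is an abelian quotient of $G$, so $d_p(G)\geq d_p(\Gal(K/k))$ and the ray-class-group computation (Proposition~\ref{propdpK}) applies; and Theorem~\ref{thm810} together with Proposition~\ref{propr-d} bounds $r_p(G)-d_p(G)$ by local data at the wildly ramified places of $S$. Finiteness of $G=\Gal(L/k)$ is equivalent to finiteness of the tower $L/K$ (since $K/k$ is finite), so the Golod--Shafarevich contradiction still forces the tower to be infinite. If you replace your $G=\Gal(L/K)$ by $\Gal(L/k)$ and cite the corresponding generator-rank and relation-rank results for that group, your outline becomes the paper's proof; without that change, the inequalities you quote are bounds on two different groups and cannot be combined.
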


In general it may be difficult to determine exactly the degrees of the conductors for the characters of the group $\Gal(K/k)$, but we know that the conductors are bounded by $\fm=\sum_{\fp\in S}\nu_{\fp}\fp$ and their degrees by $\deg\fm=\sum_{\fp\in S}f_{\fp}\nu_{\fp}$. Therefore, we obtain the following corollary, which is slightly weaker than Theorem \ref{thmtower}, but has the advantage of being easier to use.
\begin{cor}\label{cortower}
Assumptions and settings as in Theorem \ref{thmtower}. We have
\begin{equation*}
A(q) \geq \frac{t}{g-1+\frac{1}{2}\sum_{\fp\in S}f_{\fp}\nu_{\fp}}.
\end{equation*}
\end{cor}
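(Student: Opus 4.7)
The plan is to derive the corollary by simply replacing the character-sum term in the denominator of Theorem~\ref{thmtower} with the cruder, but explicit, upper bound $\deg\fm = \sum_{\fp\in S} f_{\fp}\nu_{\fp}$. Since the hypotheses of Theorem~\ref{thmtower} are assumed verbatim, all the tower-infiniteness conditions and the point-count estimate are already in place, and only the denominator needs to be loosened.

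First I would recall that $K = k_{\fm}^{T}$ is a ray class field of $k$ with conductor dividing $\fm$, so $\Gal(K/k)$ is a finite abelian group of order $[K:k]$ with exactly $[K:k]$ distinct characters $\chi$. For each such $\chi$, the associated conductor $\ff_\chi$ is the conductor of the cyclic extension cut out by $\chi$; by the conductor-divisor theorem (or directly from the definition of the ray class field) $\ff_\chi \mid \fm$, and in particular $\deg \ff_\chi \leq \deg \fm = \sum_{\fp\in S} f_{\fp}\nu_{\fp}$.

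Next I would sum this bound over all $[K:k]$ characters to obtain
\begin{equation*}
\frac{1}{2[K:k]} \sum_{\chi} \deg \ff_\chi \;\leq\; \frac{1}{2[K:k]} \cdot [K:k] \cdot \sum_{\fp\in S} f_{\fp}\nu_{\fp} \;=\; \frac{1}{2}\sum_{\fp\in S} f_{\fp}\nu_{\fp}.
\end{equation*}
Substituting this into the denominator of the lower bound supplied by Theorem~\ref{thmtower} can only decrease the fraction, yielding
\begin{equation*}
A(q) \;\geq\; \frac{t}{g-1+\frac{1}{2[K:k]}\sum_{\chi}\deg\ff_\chi} \;\geq\; \frac{t}{g-1+\frac{1}{2}\sum_{\fp\in S}f_{\fp}\nu_{\fp}},
\end{equation*}
which is the claimed inequality.

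There is no real obstacle here: the only conceptual input is the fact that every character of the ray class group modulo $\fm$ has conductor dividing $\fm$, together with the trivial counting identity $|\widehat{\Gal(K/k)}| = [K:k]$. The proof is therefore a one-line consequence of Theorem~\ref{thmtower}, and the corollary is weaker precisely because it ignores the (often substantial) cancellation that occurs when many characters have conductor strictly smaller than $\fm$.
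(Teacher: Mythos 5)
Your proof is correct and matches the paper's own derivation: the paper explicitly notes, in the paragraph introducing the corollary, that each $\deg\ff_\chi \leq \deg\fm = \sum_{\fp\in S} f_{\fp}\nu_{\fp}$ and that the corollary follows by substituting this bound into Theorem~\ref{thmtower}. (A separate remark in the paper observes that Kuhnt originally obtained the corollary by a different route via Hayes ray class fields, but the paper's own proof, like yours, is the one-line reduction from Theorem~\ref{thmtower}.)
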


The purpose of this paper is twofold. First, since Kuhnt's method is not widely known, we will summarize it in this paper, and clarify some of his points. Second, using Theorem \ref{thmtower}, we further improve the lower bounds for $A(2)$ and $A(3)$.

\begin{theorem}\label{thmA2}
\begin{equation*}
A(2) \geq 0.316999\ldots.
\end{equation*}
\end{theorem}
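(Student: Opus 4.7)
I would apply Theorem \ref{thmtower} in characteristic $p=2$ to a well-chosen function field $k/\mathbb{F}_2$, together with appropriate data $(S, T, \{\nu_\fp\}_{\fp \in S})$, and verify that the resulting lower bound
\begin{equation*}
\frac{t}{g-1 + \frac{1}{2[K:k]}\sum_{\chi}\deg \ff_\chi}
\end{equation*}
exceeds $0.316999$. Because $p=2$, the bracket $[(\nu_\fp-1)/p]$ appearing in both the admissibility constraint on $t$ and the inequality (\ref{ineqr-d}) reduces to $[(\nu_\fp-1)/2]$, making small conductor exponents (e.g.\ $\nu_\fp = 2$ or $3$) relatively efficient uses of the ``budget'' of ramification.

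The first step is to search for a base curve $k/\mathbb{F}_2$ with small genus $g$ and an abundance of low-degree places, drawing from the van der Geer--van der Vlugt tables, explicit function field towers, or targeted constant-field constructions. For each candidate, I would designate a set $S$ of rational and/or degree-$2$ places with small conductor exponents, and then take $T$ to be a maximal set of rational places disjoint from $S$ that simultaneously satisfies the linear constraint $t \leq \sum_{\fp\in S} ef_\fp(\nu_\fp - 1 - [(\nu_\fp-1)/2])$ and the quadratic inequality (\ref{ineqr-d}). Ensuring infiniteness of the $(T,2)$-class field tower is then a direct numerical check, after which the displayed ratio yields a candidate lower bound for $A(2)$.

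The main obstacle is evaluating $\sum_\chi \deg\ff_\chi$ accurately enough to improve on Corollary \ref{cortower}. The crude bound $\sum_\chi \deg \ff_\chi \leq [K:k]\deg\fm$ will not in general be sharp enough to reach $0.316999$, so one must stratify the characters of $\Gal(K/k) \cong \mathrm{Cl}_\fm^T(k)$ by their conductors. This amounts to understanding the natural filtration of ray class groups $\mathrm{Cl}_{\fm'}^T(k)$ as $\fm'$ ranges over divisors of $\fm$: locally at each $\fp \in S$ via the higher unit groups $U_\fp^{(i)}$, and globally via their image modulo principal divisors and the subgroup generated by $T$. Once the orders $|\mathrm{Cl}_{\fm'}^T(k)|$ are known, the number of characters of each conductor $\fm' \mid \fm$ is determined, and $\sum_\chi \deg\ff_\chi$ can be computed exactly. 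Optimizing this refined bound over the admissible parameter set $(k, S, T, \{\nu_\fp\})$ should then yield $A(2) \geq 0.316999\ldots$, proving the theorem. I expect the delicate part to be the simultaneous optimization: one needs a base curve rich enough in low-degree places to make $t$ large and $\deg\fm$ small, while keeping the character conductors low enough that the refined sum gives a real gain over the corollary.
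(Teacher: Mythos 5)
Your proposal identifies the correct high-level strategy—apply Theorem~\ref{thmtower} with $p=2$ to a carefully chosen base curve, and improve on Corollary~\ref{cortower} by refining the bound on $\sum_\chi \deg\ff_\chi$—but it is a plan, not a proof. You never produce a specific function field $k$, a set $S$, or a conductor $\fm$, so you cannot verify the infiniteness criterion~\eqref{ineqr-d}, and you cannot obtain the constant $0.316999\ldots$, which is the output of one explicit numerical computation. Saying ``optimizing should then yield $A(2) \geq 0.316999\ldots$'' is asserting the conclusion rather than deriving it.

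The paper's actual proof (Proposition~\ref{propF22}) builds $k$ in two explicit steps: first a degree-$2$ extension $H$ of $\mathbb{F}_2(x)$ with $y^2+(x^3+x+1)y=x^2+x$ (genus $2$, all $3$ rational places split, conductor $2P_3$), then an elementary abelian degree-$32$ extension $k/H$ with conductor $2P_5+2P'_5$ for two degree-$5$ places, giving $g(k)=343$, $a_1(k)=192$, and enough places of degrees $5,6,8,10$. It then takes $S$ to be $2$ places of degree $5$, $16$ of degree $6$, $15$ of degree $8$, $4$ of degree $10$, all with $\nu_\fp=2$, and $T$ the full set of $192$ rational places, checks~\eqref{ineqr-d}, and evaluates the displayed ratio. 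Moreover, the conductor refinement the paper uses is considerably lighter than what you propose: rather than computing the exact filtration of ray class groups $\mathrm{Cl}_{\fm'}^T(k)$ for every $\fm' \mid \fm$, it simply observes that for a place $\fp \in S$ of degree $f_\fp$ with $\nu_\fp=2$, at most a fraction $1-2^{-f_\fp}$ of the characters of $\Gal(K/k)$ can be ramified at $\fp$ (the local rank being at most $f_\fp$), so $\frac{1}{[K:k]}\sum_\chi \deg\ff_\chi \leq \sum_{\fp\in S} \nu_\fp f_\fp(1-2^{-f_\fp})$. This modest improvement over $\deg\fm$ is already enough to push $6/19 = 0.3158\ldots$ up to $0.316999\ldots$. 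Your exact-stratification plan would be sound in principle but is not needed, and without the explicit curve the argument does not close.
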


\begin{theorem}\label{thmA3}
\begin{equation*}
A(3) \geq 0.492876\ldots.
\end{equation*}
\end{theorem}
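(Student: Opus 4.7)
The plan is to apply Theorem \ref{thmtower} with a carefully chosen starting function field $k/\mathbb{F}_3$. First I would search among low-genus curves over $\mathbb{F}_3$ that are rich in rational points, since we need a large set $T$ of rational places that will split completely in the $(T,p)$-class field tower. Good candidates include subfields of known recursive towers with many rational points over $\mathbb{F}_3$, small-degree covers of $\mathbb{P}^1$ ramified at only a few places, and explicit hyperelliptic or Artin--Schreier covers with controlled ramification at a small set $S$.

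Having fixed $k$, I would enumerate possible data $(S,\{\nu_\fp\})$ for the conductor and $T$ for the split set, subject to the arithmetic constraint $t \leq \sum_{\fp \in S} e f_\fp(\nu_\fp - 1 - [(\nu_\fp-1)/p])$ and the key inequality \eqref{ineqr-d} which guarantees infiniteness of the $(T,3)$-class field tower. With $p=3$ and $e=1$ the floor term reads $[(\nu_\fp-1)/3]$, so only small values of $\nu_\fp$ (and only places of small degree $f_\fp$) give a favorable trade-off between satisfying \eqref{ineqr-d} and keeping the denominator $g-1+\frac{1}{2[K:k]}\sum_\chi\deg\ff_\chi$ small; this makes the search tractable.

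To push past the previously best bound I would refine the conductor computation beyond what is captured by Corollary \ref{cortower}. Instead of bounding each character conductor by $\fm$, I would use the structure of $\Gal(K/k)$ via the conductor--discriminant formula: local ramification at $\fp \in S$ contributes to the conductor of $\chi$ only when $\chi$ is nontrivial on the local inertia subgroup at $\fp$, so averaging $\frac{1}{2[K:k]}\sum_\chi \deg\ff_\chi$ typically gains a nontrivial factor over the crude estimate $\frac{1}{2}\sum_{\fp \in S} f_\fp \nu_\fp$. Concretely I would split the sum according to the decomposition/inertia data at each $\fp$, obtaining a sharper denominator, and then plug the optimal numerical data into Theorem \ref{thmtower} and check that the resulting ratio exceeds $0.492876\ldots$.

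The main obstacle will be the combinatorial balancing act: large $\nu_\fp$ make \eqref{ineqr-d} easy to satisfy (and so allow larger $t$) but inflate the conductor denominator, while small $\nu_\fp$ keep the denominator lean but restrict $t$. Squeezing out the numerical improvement therefore depends on (i) a clever choice of $k$ so that the image of the global units in the idele class group is as small as possible, giving a large ray class field and many split-complete rational places, and (ii) an honest local-global analysis of $\sum_\chi \deg\ff_\chi$ rather than the crude uniform bound; I expect these two refinements, performed carefully, to be exactly what produces the constant $0.492876\ldots$.
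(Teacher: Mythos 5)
Your overall strategy is the right one --- apply Theorem \ref{thmtower} and sharpen the denominator by tracking which characters of $\Gal(K/k)$ are actually ramified at each $\fp\in S$, rather than using the crude bound of Corollary \ref{cortower} --- and that second refinement is indeed exactly what the paper does to move from $63/128=0.4921875$ to $0.492876\ldots$. But as written the argument has a genuine gap: it never exhibits the base field $k$ or the data $(S,\nu,T)$, and the search space you propose (``low-genus curves over $\mathbb{F}_3$ rich in rational points'') cannot contain a suitable $k$. The inequality \eqref{ineqr-d} forces $\sum_{\fp\in S}ef_\fp(\nu_\fp-1-[(\nu_\fp-1)/p])$ to exceed $t$ by an amount of order $\sqrt{\sum_\fp (ef_\fp(\nu_\fp-1))^2}$, while keeping the ratio $t/(g-1+\tfrac12\sum_\fp f_\fp\nu_\fp)$ near $1/2$ requires the conductor degree to be comparable to $t$; together these force $t$ into the hundreds, hence $k$ must have hundreds of rational places (and a correspondingly large supply of places of one fixed small degree to populate $S$). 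No low-genus curve over $\mathbb{F}_3$ does this.

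The missing idea is a preliminary class-field-theoretic amplification step. The paper starts from the maximal elliptic function field $E=\mathbb{F}_3(x,y)$, $y^2=x^3-x+1$, with $7$ rational places and a degree-$5$ place $P_5$, and uses Proposition \ref{propdpK} to find inside the ray class field of conductor $3P_5$ an elementary abelian extension $k/E$ of degree $81$ in which all $7$ rational places split completely. This gives $a_1(k)=567$ and, via Proposition \ref{propcdf}, $g(k)=601$, together with $162$ places of degree $8$. Only then is Theorem \ref{thmtower} applied to $k$, with $S$ consisting of one place of degree $5$, $43$ places of degree $8$ and two of degree $9$, $\nu_\fp=3$, and $t=|T|=567$; one checks \eqref{ineqr-d} numerically and the refined conductor average (each $\fp\in S$ contributing $f_\fp\nu_\fp(1-3^{-f_\fp})$ on average) yields $0.492876\ldots$. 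Without this two-stage construction --- and without the explicit numerical verification of \eqref{ineqr-d} and of the genus and place counts of $k$ --- the stated constant is not established.
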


Previous lower bounds for $A(2)$ appear in \cite{Ser85}, \cite{Sch92}, \cite{NiXi98}, \cite{XiYe07}, and lower bounds for $A(3)$ in
\cite{NiXi98}, \cite{Tem01}. Lower bounds for $A(3)$ using tamely ramified towers appear in \cite{AnMa02}, \cite[Section 4.2]{HaMa01}. Among these results,
the best lower bounds are
\[A(2)\geq 97/376 = 0.257979\ldots\]
by Xing and Yeo \cite{XiYe07}, and
\[A(3) \geq \frac{12}{25} = 0.48\]
by Atiken and Hajir \cite{HaMa01}.

In \cite{Kuh02}, Kuhnt obtained a better lower bound for $A(2)$, which says
\[A(2)\geq 39/129 = 0.302325\ldots.\]
For a survey about the recent developments on upper and lower bounds for $A(q)$, see \cite{Li07}.

We summarize the idea of the class field tower method in the next section. Two important inequalities will then be proved in Section \ref{secdp} and \ref{secr-d}. Finally, in Section \ref{secA23}, we will construct our towers and prove the theorems stated above.

\section{The class field tower method}\label{seccft}

Recall (see \cite[\S I.6]{Har77}) that there is an equivalence of categories between the category of curves (in the sense of the previous section) over $\mathbb{F}_q$ and the category of function fields of transcendence degree one (we will refer them hereafter as \textit{function fields}) over $\mathbb{F}_q$. Under this correspondence, the rational points of a curve correspond to the rational places (places of degree one) of its corresponding function field.

We briefly summarize the idea of Serre's class field tower method in this section. Suppose we have a curve $\CX$ over $\mathbb{F}_q$ of genus $g$, and let $K$ be its corresponding function field, called the \textit{ground field}. Let $l$ be a prime and $T$ a nonempty set of rational places in $K$. The $(T,l)$-Hilbert class field $H(K)$ of $K$ is the maximal unramified $l$-abelian extension of $K$ in which all places in $T$ split completely. We construct the $(T,l)$-class field tower using class field theory as follows. We construct recursively $K_1=H(K), K_2=H(K_1),\ldots$ and obtain the tower of fields
\[\ds K\subseteq K_1\subseteq K_2\subseteq\ldots.\]
If the tower is infinite, then using the Hurwitz genus formula, we get a lower bound for $A(q)$ with
\begin{equation} \label{eqnT/g}
A(q) \geq \frac{\abs{T}}{g-1}.
\end{equation}
See \cite[Section 2.7]{NiXi01} for more details.


One of the essential tools of the class field tower method is the Golod-Shafarevich theorem \cite{GoSa64,GaNe70}.
\begin{theorem}[Golod-Shafarevich]
Let $p$ be a prime and let $G$ be a nontrivial finite $p$-group. We have
\begin{equation*}
r_p(G)>\frac{d_p(G)^2}{4}.
\end{equation*}
\end{theorem}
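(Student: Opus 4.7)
The plan is to analyze the group algebra $A = \mathbb{F}_p[G]$ through its augmentation ideal $I = \ker(A \to \mathbb{F}_p)$. Since $G$ is a nontrivial finite $p$-group, $A$ is local with residue field $\mathbb{F}_p$, the ideal $I$ is nilpotent, and the associated graded ring $\bigoplus_{n \geq 0} I^n/I^{n+1}$ has a polynomial Hilbert series $P(t) = \sum_{n \geq 0} a_n t^n$ with nonnegative coefficients and $\sum_n a_n = \abs{G}$. Standard identifications give $a_0 = 1$ and $a_1 = \dim_{\mathbb{F}_p}(I/I^2) = d_p(G) =: d$.

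The crux of the argument is the Hilbert-series inequality
\[
P(t)\cdot(1 - d t + r t^2) \succeq 1,
\]
where $r = r_p(G)$ and $\succeq$ denotes coefficient-wise comparison (constant term equal to $1$, every other coefficient $\geq 0$). To establish this I would fix a minimal pro-$p$ presentation $1 \to R \to F \to G \to 1$ with $F$ free pro-$p$ of rank $d$ and $R$ normally generated by $r$ elements, and translate it into the first two stages of a minimal free resolution $A^r \xrightarrow{\partial_2} A^d \xrightarrow{\partial_1} A \to \mathbb{F}_p \to 0$. Minimality, combined with the identification $r_p(G) = \dim_{\mathbb{F}_p} H^2(G,\mathbb{F}_p)$, forces the images of $\partial_1$ and $\partial_2$ to lie in $I$ and $I\cdot A^d$ respectively and to give minimal generating sets there. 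Passing to associated graded pieces and counting dimensions degree by degree then yields exactly the stated inequality.

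To finish, suppose for contradiction that $r \leq d^2/4$. If $d = 0$ then $G$ is trivial, contradicting the hypothesis, so $d \geq 1$ and the quadratic $q(t) = 1 - dt + rt^2$ has a positive real root $t_0$ (take $t_0 = 1/d$ when $r = 0$; otherwise $t_0 = (d - \sqrt{d^2 - 4r})/(2r) > 0$). Since $a_0 = 1$ and all coefficients of $P$ are nonnegative, $P(t_0) \geq 1 > 0$, while evaluating the Hilbert-series inequality at $t_0 > 0$ gives $P(t_0)\cdot q(t_0) \geq 1$. But $q(t_0) = 0$ makes the left-hand side $0$, a contradiction. Hence $r > d^2/4$.

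The main obstacle is the second step, establishing the Hilbert-series inequality from a minimal pro-$p$ presentation. This requires the cohomological identification of $r_p(G)$ as the minimal number of relations, together with a careful tracking of the $I$-adic filtration through the partial resolution (typically via the Fox / Jacobian formalism, or equivalently by comparing with the Zassenhaus filtration of $G$). Once that inequality is in hand, the concluding polynomial-evaluation trick is essentially a one-line calculation.
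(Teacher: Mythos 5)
The paper does not prove this theorem; it is quoted as a black box from \cite{GoSa64} and \cite{GaNe70}, so there is no in-paper argument to compare against. Your outline is the standard modern proof of the Golod--Shafarevich inequality in the sharp Gasch\"utz--Vinberg form (essentially Roquette's exposition in Cassels--Fr\"ohlich, or Koch's): pass to $A=\mathbb{F}_p[G]$, filter by powers of the augmentation ideal $I$, and compare the Hilbert series $P(t)=\sum\dim_{\mathbb{F}_p}(I^n/I^{n+1})\,t^n$ against $1-dt+rt^2$. The closing step is sound as written: since $G$ is finite, $I$ is nilpotent and $P$ is a polynomial with nonnegative coefficients and $P(0)=1$, so if $r\le d^2/4$ the quadratic $1-dt+rt^2$ has a positive real root $t_0$, and evaluating the coefficient-wise inequality $P(t)(1-dt+rt^2)\succeq 1$ at $t_0$ gives $0\ge 1$. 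The one place you explicitly leave as a sketch is precisely the substance of the theorem: establishing $P(t)(1-dt+rt^2)\succeq 1$, equivalently $\dim(I^n/I^{n+1})+r\dim(I^{n-2}/I^{n-1})\ge d\dim(I^{n-1}/I^n)$ for all $n\ge 1$. Your proposed route --- take a minimal presentation with $d=d_p(G)$ generators and $r=r_p(G)=\dim H^2(G,\mathbb{F}_p)$ relations, form the first two steps $A^r\to A^d\to A\to\mathbb{F}_p\to 0$ of a minimal free resolution, and compare graded pieces under the shifted $I$-adic filtrations --- is exactly how this is done, so the plan is correct; but to be a complete proof one would still need to carry out that filtration bookkeeping (in particular verifying that minimality forces $\partial_1(A^d)\subseteq I$ and $\partial_2(A^r)\subseteq I\cdot A^d$ with graded images generating in the lowest degree), which the proposal acknowledges but does not do.
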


If the $(T,\ell)$-Hilbert class field tower of $K$ stabilizes at a finite extension $L/K$ then the relation rank and the generator rank of $G'=\Gal(L/K)$ satisfy
bounds of the form $d(G') \geq A'$ and $r(G')-d(G') \leq B'$. When $A'$ and $B'$ are such that $B' \leq A'^2/4-A'$ the possibility that the tower stabilizes can be excluded.
In \cite{Kuh02}, Kuhnt considers the tower of extensions $L/K/k$, where $L/K$ is the usual class field tower of Serre and $K/k$ is a finite Galois $p$-extension
with controlled wild ramification. He then considers the Galois group $G=\Gal(L/k)$ instead of the usually considered $G'=\Gal(L/K)$, and show that the same type of inequalities hold for $G$. i.e. $d(G) \geq A$ and $r(G)-d(G) \leq B$ for some $A$ and $B$. Again if $B \leq A^2/4-A$, then $G$ is infinite and so is the class field tower.

Theorem \ref{thm626} in Section~\ref{secdp} gives a lower bound for the generator rank that applies to $\Gal(L/K)$ and with a minor modification to $\Gal(L/k)$. Theorem \ref{thm810} in Section \ref{secr-d} gives an upper bound for the difference of the relation rank and the generator rank for the group $\Gal(L/k)$. The theorem is due to Kuhnt \cite{Kuh02}. In Section~\ref{secA23} we give a general method of constructing towers, and state the criteria for the towers to be infinite. This allow us to improve the known lower bounds for $A(2)$ and $A(3)$.


\section{Lower Bound for the Generator Rank} \label{secdp}

\subsection{Ramification of bounded depth}

Let $K$ be a global function field of characteristic $p$ with exact constant field $\mathbb{F}_q$. Let $S$ be a set of primes in $K$, and $\nu:S\rightarrow[0,\infty]$ be a map sending $\fp$ to $\nu_{\fp}$. We extend $\nu$ to all primes in $K$ by setting $\nu_{\fp}=0$ for all $\fp\notin S$. The first concept we need is ramification of bounded depth. We will be contented with outlining only the necessary background, for more details see \cite{HaMa02}.

\begin{definition}[See Section 3 of \cite{HaMa02}]
\text{}
\begin{enumerate}
\item Let $L/K$ be a Galois extension of global fields with Galois group $G$. We say that $L/K$ has ramification of depth at most $n$ at a prime $\fp$ in $K$ if the ramification groups $G_{\fP}^n$ in upper numbering (normalized as in \cite{Ser79}) are trivial for all primes $\fP$ in $L$ above $\fp$.
\item Let $\nu:S\rightarrow[0,\infty]$ be a map. We say that the ramification depth of $L/K$ is bounded by $\nu$ if $L/K$ has ramification of depth at most $\nu_{\fp}$ at any prime $\fp$.
\item Let $K_{S,\nu}$ be the maximal $p$-extension of $K$ unramified outside $S$ and with the property that the ramification depth of $K_{S,\nu}/K$ at $\fp$ is at most $\nu_{\fp}$ for any prime $\fp\in S$. Let $G_{S,\nu}=\Gal(K_{S,\nu}/K)$. This is a $p$-group.
\end{enumerate}
\end{definition}

Suppose $L/K$ is a finite extension contained in $K_{S,\nu}$, and $S_L$ is the set of all primes in $L$ lying above $S$. We lift the map $\nu$ in $S$ to a map $\nu_L$ in $S_L$ by setting
\begin{equation} \label{eqnnuL}
\nu_{L,\fP}=\psi_{\fP/\fp}(\nu_{\fp}),
\end{equation}
where $\psi_{\fP/\fp}$ is given by the equation $G^s_{\fP}=G_{\psi_{\fP/\fp}(s),\fP}$ relating the upper numberings and lower numberings of the ramification groups. This definition allows us to describe the extension $K_{S,\nu}/K$ as a tower of abelian extensions. Set $K_1=K$, $S_1=S$. We define $K_{n+1}$ to be the maximal abelian extension of $K_n$ contained in $K_{S_n,\nu_n}$, and $S_{n+1}$ the set of primes in $K_{n+1}$ lying above $S_n$, and $\nu_{n+1}$ the extension of $\nu$ from $S_n$ to $S_{n+1}$. Let $K_{\infty}$ be the union of all $K_n$.
\begin{prop}[Theorem 3.5 of \cite{HaMa02}]
Let $K$, $S$, $\nu$ be as above. Then $K_{\infty}=K_{S,\nu}$.
\end{prop}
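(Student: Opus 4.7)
The plan is to prove the two inclusions $K_\infty \subseteq K_{S,\nu}$ and $K_{S,\nu} \subseteq K_\infty$ separately, using Herbrand's theorem as the main tool. Recall two complementary compatibilities of ramification groups in a Galois tower $L/K'/K$ with $H = \Gal(L/K')$: upper numbering descends through quotients, $(G/H)^u = G^u H/H$, while lower numbering restricts to subgroups, $G_i(L/K') = G_i(L/K) \cap H$. The definition $\nu_{n,\fP} = \psi_{\fP/\fp}(\nu_\fp)$ is precisely chosen so that the Herbrand composition formulas combine these two behaviors into the transfer identity $G^{\nu_{n,\fP}}_\fq(L/K_n) = G^{\nu_\fp}_\fq(L/K) \cap \Gal(L/K_n)$, for any prime $\fq$ of $L$ lying above $\fP$ of $K_n$ lying above $\fp$ of $K$; this identity drives both directions of the proof.

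For the forward inclusion $K_\infty \subseteq K_{S,\nu}$, I would induct on $n$, the base case $K_1 = K$ being trivial. For the inductive step, the defining property $G^{\nu_{n,\fP}}_\fq(K_{n+1}/K_n) = 1$ combined with the transfer identity yields $G^{\nu_\fp}_\fq(K_{n+1}/K) \cap \Gal(K_{n+1}/K_n) = 1$, while the inductive hypothesis $G^{\nu_\fp}_\fP(K_n/K) = 1$ together with quotient compatibility forces $G^{\nu_\fp}_\fq(K_{n+1}/K) \subseteq \Gal(K_{n+1}/K_n)$. Together these give $G^{\nu_\fp}_\fq(K_{n+1}/K) = 1$; unramification outside $S$ is inherited directly, and $K_{n+1}/K$ remains a $p$-extension.

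For the reverse inclusion $K_{S,\nu} \subseteq K_\infty$, take any finite Galois $p$-subextension $L/K$ of $K_{S,\nu}$. Since $\Gal(L/K)$ is a finite $p$-group it admits a chief series with elementary abelian quotients, producing an interpolating tower $K = L_0 \subsetneq L_1 \subsetneq \cdots \subsetneq L_r = L$ with each $L_{j+1}/L_j$ abelian. Inducting on $j$ that $L_j \subseteq K_j$, the composite $M := L_{j+1} K_j$ is abelian over $K_j$, inherits unramification outside $S_j$ from $M \subseteq K_{S,\nu}$, and by the transfer identity satisfies $G^{\nu_{j,\fP}}_\fq(M/K_j) = G^{\nu_\fp}_\fq(M/K) \cap \Gal(M/K_j) = 1$, so $M \subseteq K_{S_j,\nu_j}$ and maximality gives $M \subseteq K_{j+1}$. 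The main obstacle is bookkeeping the Herbrand composition $\varphi_{L/K} = \varphi_{K'/K} \circ \varphi_{L/K'}$ to establish the transfer identity with the correct convention for $\psi$; once this is in hand, the forward induction requires the mildly subtle combination of intersection and quotient-image information, while the reverse induction is immediate.
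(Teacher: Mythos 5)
The paper does not actually prove this proposition: it is imported verbatim as Theorem~3.5 of \cite{HaMa02}, so there is no in-paper argument to compare against. That said, your sketch is essentially the correct (and the expected) proof, and it is, to my recollection, the same mechanism used in \cite{HaMa02}: the definition $\nu_{L,\fP}=\psi_{\fP/\fp}(\nu_\fp)$ is rigged precisely so that Herbrand's composition law $\psi_{L/K}=\psi_{L/K'}\circ\psi_{K'/K}$, together with $G_{i}(L/K')=G_i(L/K)\cap\Gal(L/K')$ in lower numbering and $(G/H)^u=G^uH/H$ in upper numbering, yields the ``transfer identity'' you state, and both inclusions then follow by the inductions you give. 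Two small points are elided and should at least be noticed: (a) for Herbrand's composition law and the transfer identity to apply at each stage you need $K_n/K$ Galois with $S_n$, $\nu_n$ stable under $\Gal(K_n/K)$, which must be carried along in the induction (it holds because $S_n$ is the set of primes over $S$ and $\psi_{\fP/\fp}$ is conjugation-equivariant); and (b) your indexing in the reverse direction is shifted by one relative to the paper's convention $K_1=K$ (with $K=L_0$ you should be proving $L_j\subseteq K_{j+1}$), but this is cosmetic. Neither affects the correctness of the argument.
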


\subsection{The generator rank}

Following the notations from \cite{HaMa02}, we set
\begin{align*}
U_{\fp}^{(n)} &= \{ x\in K_{\fp}|v_{\fp}(x-1)\geq n \} \text{~(the $n$-th higher unit group in $K_{\fp}$)}, \\
\Delta &= \{ x\in K^{\ast}|(x) \text{~is a $p$-th power in the group of fractional ideals of $K$} \}, \\
\Delta_S &= \{ x\in\Delta|x\in K_{\fp}^{\ast p} \,\,\forall\fp\in S \}/K^{\ast p}, \\
\Delta_{S,\nu} &= \{ x\in\Delta|x\in K^{\ast p}_{\fp}U_{\fp}^{(\nu_{\fp})} \,\,\forall\fp\in S \}/K^{\ast p}.
\end{align*}
Then the generator rank $d_{S,\nu}$ of $G_{S,\nu}$ is given by the following proposition.
\begin{prop}[Theorem 6.26 of \cite{Kuh02}] \label{prop626}
\begin{equation*}
d_{S,\nu}=1+d_p(\Delta_{S,\nu})+\sum_{\fp\in S}d_p\left(\frac{U_{\fp}^{(1)}}{U_{\fp}^{(\nu_{\fp})}}\right).
\end{equation*}
\end{prop}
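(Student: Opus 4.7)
The plan is to identify $d_{S,\nu}$ with an $\mathbb{F}_p$-dimension coming from class field theory, and then evaluate it through a pair of exact sequences that separate the constant-field contribution (the summand $1$) from the global contribution ($d_p(\Delta_{S,\nu})$) and the local contributions (the sum over $\fp\in S$).

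First I would reduce to the abelian setting. Burnside's basis theorem for the pro-$p$ group $G_{S,\nu}$ gives $d_{S,\nu}=\dim_{\mathbb{F}_p}\Gal(M/K)$, where $M$ is the maximal abelian exponent-$p$ subextension of $K_{S,\nu}/K$, equivalently the largest abelian extension of $K$ of exponent $p$ that is unramified outside $S$ and has ramification depth at most $\nu_\fp$ at each $\fp\in S$. By Hasse--Arf and local class field theory, the depth condition at $\fp$ translates to the local Artin kernel containing $U_\fp^{(\nu_\fp)}$, so global class field theory yields
\begin{equation*}
\Gal(M/K)\cong \mathbb{I}_K\big/\bigl(K^{\ast}\cdot U_{S,\nu}\cdot\mathbb{I}_K^p\bigr),
\end{equation*}
where $U_{S,\nu}=\prod_{\fp\in S}U_\fp^{(\nu_\fp)}\times\prod_{\fp\notin S}\mathcal{O}_\fp^{\ast}$.

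Next I would extract the summand $1$ using the degree map $\deg\colon\mathbb{I}_K\to\mathbb{Z}$. Since $K^{\ast},U_{S,\nu}\subset\mathbb{I}_K^0$ and $\mathbb{I}_K^p\cap\mathbb{I}_K^0=(\mathbb{I}_K^0)^p$, the degree map induces a split short exact sequence of $\mathbb{F}_p$-vector spaces
\begin{equation*}
0\to \mathbb{I}_K^0\big/\bigl(K^{\ast}\cdot U_{S,\nu}\cdot(\mathbb{I}_K^0)^p\bigr)\to \mathbb{I}_K\big/\bigl(K^{\ast}\cdot U_{S,\nu}\cdot\mathbb{I}_K^p\bigr)\to \mathbb{Z}/p\to 0,
\end{equation*}
in which the rightmost $\mathbb{Z}/p$ records the constant-field extension and contributes the $1$ in the formula. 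To compute the leftmost term I would then apply the four-term exact sequence
\begin{equation*}
0\to \frac{K^{\ast}\cap U_{S,\nu}(\mathbb{I}_K^0)^p}{K^{\ast p}}\to \frac{K^{\ast}}{K^{\ast p}}\to \frac{\mathbb{I}_K^0}{U_{S,\nu}(\mathbb{I}_K^0)^p}\to \frac{\mathbb{I}_K^0}{K^{\ast} U_{S,\nu}(\mathbb{I}_K^0)^p}\to 0,
\end{equation*}
and identify the leftmost term with $\Delta_{S,\nu}$: an $x\in K^{\ast}$ lies in $U_{S,\nu}(\mathbb{I}_K^0)^p$ precisely when $v_\fp(x)\in p\mathbb{Z}$ for all $\fp\notin S$ (so $(x)$ is a $p$-th power divisor) and $x\in K_\fp^{\ast p}U_\fp^{(\nu_\fp)}$ for all $\fp\in S$, which are exactly the defining conditions of $\Delta_{S,\nu}$.

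Finally I would evaluate the middle term place by place, using the restricted-product structure of $\mathbb{I}_K$ to reduce to local factors $K_\fp^{\ast}/U_\fp^{(\nu_\fp)}K_\fp^{\ast p}$. With $K_\fp^{\ast}\cong\mathbb{Z}\times\mathbb{F}_{q^{f_\fp}}^{\ast}\times U_\fp^{(1)}$, and using that $|\mathbb{F}_{q^{f_\fp}}^{\ast}|$ is coprime to $p$ together with the observation that the Frattini quotient of the finite $p$-group $U_\fp^{(1)}/U_\fp^{(\nu_\fp)}$ equals $U_\fp^{(1)}/\bigl(U_\fp^{(1)p}U_\fp^{(\nu_\fp)}\bigr)$, each place $\fp\in S$ contributes $1+d_p(U_\fp^{(1)}/U_\fp^{(\nu_\fp)})$ and each $\fp\notin S$ contributes $1$. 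The main obstacle will be the bookkeeping in the last step, since both $\dim_{\mathbb{F}_p} K^{\ast}/K^{\ast p}$ and $\sum_{\fp}1$ are infinite. To regularize, I would compare the composition $K^{\ast}/K^{\ast p}\to\bigoplus_\fp\mathbb{Z}/p$ (reduction of valuations) with the local $\mathbb{Z}/p$-contributions via the snake-lemma sequence $0\to\mathrm{Pic}^0(K)[p]\to K^{\ast}/K^{\ast p}\to \mathrm{Div}(K)/p\to\mathrm{Pic}^0(K)[p]\oplus\mathbb{Z}/p\to 0$. This shows that the image of principal divisors absorbs the $1$-per-place contributions at $\fp\notin S$ up to a finite-dimensional correction, which after accounting for the degree term already extracted leaves exactly the cokernel $d_p(\Delta_{S,\nu})+\sum_{\fp\in S}d_p(U_\fp^{(1)}/U_\fp^{(\nu_\fp)})$, completing the proof.
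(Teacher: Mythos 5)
Your opening reductions are correct and aligned with the paper: Burnside gives $d_{S,\nu}=\dim_{\mathbb{F}_p}\Gal(M/K)$ where $M$ is the $S$-ramified, depth-$\nu$-bounded, exponent-$p$ abelian extension, and global class field theory identifies this with the ray class group mod $p$. Your degree splitting to extract the constant-field $1$ is also fine. But the last two steps contain a real gap, and it is precisely the difficulty that the paper's choice of exact sequence is designed to avoid.

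In your four-term sequence the two middle terms $K^{\ast}/K^{\ast p}$ and $\mathbb{I}_K^0/U_{S,\nu}(\mathbb{I}_K^0)^p$ are both infinite-dimensional $\mathbb{F}_p$-vector spaces, so ``taking the alternating sum of dimensions'' is not available and you are forced into the regularization you describe. Worse, your plan to ``evaluate the middle term place by place, using the restricted-product structure of $\mathbb{I}_K$'' does not apply to $\mathbb{I}_K^0$: once you have cut down to degree zero, the group is no longer a restricted product over places, so the claimed local factors $K_\fp^{\ast}/U_\fp^{(\nu_\fp)}K_\fp^{\ast p}$ do not appear. The snake-lemma sequence you write for $K^{\ast}/K^{\ast p}\to\mathrm{Div}(K)/p$ is correct, and the idea that ``principal divisors absorb the $1$-per-place contributions away from $S$'' is the right intuition, but as stated it is a heuristic, not a proof; one would need to set up a genuine ladder of exact sequences with finite-dimensional kernels and cokernels and apply the snake lemma there, carrying the degree term consistently (your sequence still uses $\mathrm{Div}(K)$, whereas after Step 3 you should be working with $\mathrm{Div}^0(K)$).

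The paper sidesteps all of this by never comparing the infinite-dimensional groups $K^{\ast}/K^{\ast p}$ and the idele-class quotient directly. Instead it uses the five-term exact sequence
\begin{equation*}
1 \rightarrow \Delta_{S,\nu} \rightarrow \Delta/K^{\ast p} \rightarrow \frac{\mathcal{U}_{\varnothing}}{\mathcal{U}^p_{\varnothing}\mathcal{U}_{S,\nu}} \rightarrow \frac{I_K}{\mathcal{U}_{S,\nu}I^p_K} \rightarrow \frac{I_K}{K^{\ast}\mathcal{U}_{\varnothing}I^p_K} \rightarrow 1,
\end{equation*}
in which every term is finite-dimensional: $\Delta/K^{\ast p}$ is identified with $Cl_K[p]$, not all of $K^{\ast}/K^{\ast p}$, and the unit quotient $\mathcal{U}_{\varnothing}/\mathcal{U}_{\varnothing}^p\mathcal{U}_{S,\nu}$ genuinely decomposes place by place as $\prod_{\fp\in S}U_\fp/U_\fp^pU_\fp^{(\nu_\fp)}$, each factor having $p$-rank $d_p(U_\fp^{(1)}/U_\fp^{(\nu_\fp)})$ because $\lvert\mathbb{F}_{q^{f_\fp}}^{\ast}\rvert$ is prime to $p$. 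So the key move you are missing is to replace $K^{\ast}/K^{\ast p}$ by the group $\Delta/K^{\ast p}$ of $p$-power principal divisors modulo $p$-th powers, and to replace the idelic degree-zero quotient by the unit-idele quotient $\mathcal{U}_\varnothing/\mathcal{U}_\varnothing^p\mathcal{U}_{S,\nu}$. With that replacement the alternating sum of $p$-ranks is a finite computation and, after the constant-field accounting, gives the stated formula directly. As written, your Step 6 is an outline of a regularization argument rather than a proof, and its central reduction to local factors is not valid for $\mathbb{I}_K^0$.
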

\begin{proof}
This proposition is the function field analogue of \cite[Theorem 3.7]{HaMa02}, and the proof follows the same line as in that theorem. We outline the proof here. Let $I_K$ be the group of ideles of $K$, and let $\mathcal{U}_{S,\nu}=\prod_{\fp\notin S}U_{\fp}\prod_{\fp\in S}U_{\fp}^{(\nu_{\fp})}$. In particular, if $S=\varnothing$, we write $\mathcal{U}_{\varnothing}=\prod_{\fp}U_{\fp}$. Consider the following exact sequence
\begin{equation*}
1 \rightarrow \Delta_{S,\nu} \rightarrow \Delta/K^{\ast p} \rightarrow \frac{\mathcal{U}_{\varnothing}}{\mathcal{U}^p_{\varnothing}\mathcal{U}_{S,\nu}} \rightarrow \frac{I_K}{\mathcal{U}_{S,\nu}I^p_K} \rightarrow \frac{I_K}{K^{\ast}\mathcal{U}_{\varnothing}I^p_K} \rightarrow 1
\end{equation*}
in class field theory. Note that $I_K/K^{\ast}\mathcal{U}_{\varnothing}I^p_K=Cl_K/Cl_K^p$, and $d_p(\Delta_{\varnothing})=d_p(Cl_K)$ since $d_p(\mathbb{F}_q^{\ast})=0$. Let $\fm=\sum_{\fp\in S}\nu_{\fp}\fp$ be the conductor corresponding to $\nu$. Taking account of the constant fields in $K_{S,\nu}$ and $Cl_K$, we obtain $d_p(G_{S,\nu})=d_p(Cl^{\fm}(K))+1$. Putting all these together gives the proposition.
\end{proof}

To calculate the generator rank above, we need the following.
\begin{prop} \label{prop627}
Let $K$ be a global function field of characteristic $p$, with full constant field $\mathbb{F}_q$, where $q=p^e$. Let $\fp$ be a prime in $K$ of degree $f$, then
\begin{equation*}
d_p\left(\frac{U_{\fp}^{(1)}}{U_{\fp}^{(\nu_{\fp})}}\right)=f\cdot e\cdot (\nu_{\fp}-1-[(\nu_{\fp}-1)/p]),
\end{equation*}
where $[\cdot]$ denotes the integer part.
\end{prop}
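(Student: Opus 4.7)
The plan is to reduce to a purely local computation in the completion $K_{\fp}$, which is a local field of characteristic $p$ with residue field $\mathbb{F}_{q^f}$. Fixing a uniformizer $\pi$, the map $1 + a\pi^m \mapsto a$ induces an isomorphism $U_{\fp}^{(m)}/U_{\fp}^{(m+1)} \cong (\mathbb{F}_{q^f}, +)$, which is an $\mathbb{F}_p$-vector space of dimension $ef$. The group $U_{\fp}^{(1)}/U_{\fp}^{(\nu_{\fp})}$ therefore has order $p^{ef(\nu_{\fp}-1)}$. The key observation is that in characteristic $p$ the identity $(1+x)^p = 1 + x^p$ holds exactly, so the $p$-th power map carries $U_{\fp}^{(m)}$ into $U_{\fp}^{(mp)}$ and induces on graded pieces the Frobenius $a \mapsto a^p$, which is a bijection on $\mathbb{F}_{q^f}$.

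I would then proceed by induction on $\nu_{\fp}$, writing $V_n := U_{\fp}^{(1)}/U_{\fp}^{(n)}$. The base case $n=2$ is immediate, since $V_2 \cong \mathbb{F}_{q^f}$ has $p$-rank $ef$, matching the formula. For the inductive step, consider the short exact sequence $0 \to K_n \to V_n \to V_{n-1} \to 0$ with $K_n := U_{\fp}^{(n-1)}/U_{\fp}^{(n)} \cong \mathbb{F}_{q^f}$, and examine the induced behaviour on $p$-th powers. If $p \mid (n-1)$, write $n-1 = pm$; then $p$-th powers of lifts from $U_{\fp}^{(m)}$ surject onto $K_n$ by the Frobenius observation, so $K_n \subseteq V_n^p$ and $d_p(V_n) = d_p(V_{n-1})$. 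If instead $p \nmid (n-1)$, then for any nontrivial $u \in V_n$ with lift $1+x \in U_{\fp}^{(1)}$ one has $u^p = 1 + x^p$ and $v_{\fp}(u^p - 1) = p\, v_{\fp}(x)$, a multiple of $p$; hence $u^p$ cannot lie nontrivially in $K_n$, which would require this valuation to equal $n-1$. Using that $V_n$ is abelian, so that $V_n^p = \{u^p : u \in V_n\}$, one concludes $V_n^p \cap K_n = \{1\}$, whence $d_p(V_n) = d_p(V_{n-1}) + ef$.

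Iterating from $n = 2$ to $n = \nu_{\fp}$, this gives
$d_p(V_{\nu_{\fp}}) = ef \cdot \#\{1 \leq m \leq \nu_{\fp}-1 : p \nmid m\} = f \cdot e \cdot (\nu_{\fp} - 1 - [(\nu_{\fp}-1)/p])$,
as desired. The main technical subtlety lies in the case $p \nmid (n-1)$, where one must control the entire subgroup $V_n^p$ rather than merely individual $p$-th powers; this is handled by exploiting the abelianness of $V_n$, so that the subgroup of $p$-th powers coincides with the set-theoretic image of the $p$-th power map and the valuation argument extends uniformly. Everything else reduces to the bijectivity of Frobenius on $\mathbb{F}_{q^f}$ and a careful bookkeeping of the filtration on $U_{\fp}^{(1)}$.
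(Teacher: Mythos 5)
Your argument is correct, but it is worth noting that the paper does not actually prove this statement: it simply cites Lemma 4.2.5(i) of Niederreiter--Xing, so you have supplied a self-contained proof where the authors rely on a reference. Your proof is essentially the standard filtration argument behind that lemma. The key points all check out: the graded pieces $U_{\fp}^{(m)}/U_{\fp}^{(m+1)}\cong\mathbb{F}_{q^f}$ each have $\mathbb{F}_p$-dimension $ef$; the identity $(1+x)^p=1+x^p$ in characteristic $p$ shows that $p$-th powering shifts the filtration level by a factor of $p$ and acts as the (bijective) Frobenius on graded pieces; when $p\mid(n-1)$ this gives $K_n\subseteq V_n^p$ and hence no change in rank, and when $p\nmid(n-1)$ the fact that $v_{\fp}(u^p-1)$ is always a multiple of $p$ forces $V_n^p\cap K_n=\{1\}$. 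For the last step one should make explicit that $V_{n-1}^p=V_n^pK_n/K_n\cong V_n^p/(V_n^p\cap K_n)$, so that comparing orders of $V_n/V_n^p$ and $V_{n-1}/V_{n-1}^p$ yields the increment $ef$ exactly when $p\nmid(n-1)$; you gesture at this and it is correct. The count $\#\{1\le m\le\nu_{\fp}-1: p\nmid m\}=\nu_{\fp}-1-[(\nu_{\fp}-1)/p]$ then gives the stated formula. An equivalent, non-inductive route (closer to the structure theory of one-units in $\mathbb{F}_{q^f}((\pi))$) is to observe directly that $U_{\fp}^{(1)p}U_{\fp}^{(\nu_{\fp})}$ absorbs exactly the graded pieces at levels $m$ with $p\mid m$, but your induction buys the same conclusion with only elementary valuation arguments.
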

\begin{proof}
This is \cite[Lemma 4.2.5 (i)]{NiXi01}.
\end{proof}

Combining Proposition \ref{prop626} and \ref{prop627}, we get the following theorem.
\begin{theorem} \label{thm626}
Let $K$ be a global function field of characteristic $p$, with full constant field $\mathbb{F}_q$, where $q=p^e$. For a prime $\fp$ in $K$ let $f_{\fp}$ be its degree. The generator rank $d_{S,\nu}$ of $G_{S,\nu}=\Gal(K_{S,\nu}/K)$ satisfies
\begin{equation*}
d_{S,\nu} = 1+d_p\Delta_{S,\nu}+\sum_{\fp\in S}ef_{\fp}(\nu_{\fp}-1-[(\nu_{\fp}-1)/p]).
\end{equation*}
\end{theorem}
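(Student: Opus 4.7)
The statement of Theorem \ref{thm626} is an immediate combination of Propositions \ref{prop626} and \ref{prop627}, so my plan is simply to substitute the second into the first. First I would invoke Proposition \ref{prop626}, which rewrites the generator rank as
\begin{equation*}
d_{S,\nu} = 1 + d_p(\Delta_{S,\nu}) + \sum_{\fp \in S} d_p\left(\frac{U_{\fp}^{(1)}}{U_{\fp}^{(\nu_{\fp})}}\right).
\end{equation*}
The only remaining task is to evaluate each local contribution $d_p(U_{\fp}^{(1)}/U_{\fp}^{(\nu_{\fp})})$.

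Proposition \ref{prop627} provides the exact value of this local quantity in terms of the residue degree $f_{\fp}$ of $\fp$, the exponent $e$ coming from $q = p^e$, and the arithmetic of the depth $\nu_{\fp}$:
\begin{equation*}
d_p\left(\frac{U_{\fp}^{(1)}}{U_{\fp}^{(\nu_{\fp})}}\right) = ef_{\fp}(\nu_{\fp} - 1 - [(\nu_{\fp}-1)/p]).
\end{equation*}
Summing over $\fp \in S$ and inserting into the expression from Proposition \ref{prop626} produces the claimed formula. The only bookkeeping required is to identify the symbol $f$ in Proposition \ref{prop627} with the local degree $f_{\fp}$ in the theorem, and to observe that the constant field $\mathbb{F}_q$ with $q = p^e$ is consistent between the two statements. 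There is no substantive obstacle here, since all the real work---the idele-class exact sequence computation underlying Proposition \ref{prop626} and the higher unit group filtration count underlying Proposition \ref{prop627}---has already been carried out in the cited results.
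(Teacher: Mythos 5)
Your proposal is correct and matches the paper's own proof exactly: the theorem is presented there with the single line ``Combining Proposition \ref{prop626} and \ref{prop627}, we get the following theorem,'' which is precisely the substitution you describe. Nothing further is needed.
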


\section{Difference between the generator rank and the relation rank} \label{secr-d}

The following theorem about the estimation of the difference between the generator rank and the relation rank is due to Kuhnt \cite{Kuh02}.

\begin{theorem}[Theorem 8.10 of \cite{Kuh02}] \label{thm810}
Let $L/K/k$ be a tower of Galois $p$-extensions of global function fields over a finite field. The extension $K/k$ is finite, unramified outside a set $S$ of primes of $k$ and $L/K$ is unramified. Let $T_L$ be a nonempty, finite set of primes in $L$ and let $T_k=T_L\cap k$. Let $G=\Gal(L/k)$ and let
\begin{equation*}
\delta = r_p(G)-d_p(G)-\sum_{\fp\in S}(r_p(G_{\fp})-d_p(G_{\fp}))-(\abs{T_k}-1)+d_p(\Delta_{S}).
\end{equation*}
If $\delta>0$, then there exists an unramified Galois $p$-extension $\tilde{L}/L$, 
in which $T_L$ splits completely with $d_p(\Gal(\tilde{L}/L))\geq\delta$ and with the same constant field as $L$. In other words, if $L$ is the maximal unramified $p$-extension of $K$ (and $S, T_L, T_k$ as above), then
\begin{equation*}
r_p(G)-d_p(G) \leq \sum_{\fp\in S}(r_p(G_{\fp})-d_p(G_{\fp}))+(\abs{T_k}-1)-d_p(\Delta_{S}).
\end{equation*}
\end{theorem}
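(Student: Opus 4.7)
The plan is to reformulate the inequality cohomologically and then invoke the function field analogue of Poitou--Tate duality, closely following the number-field template of Hajir--Maire \cite{HaMa02}. Writing $d_p(G) = \dim_{\mathbb{F}_p} H^1(G,\mathbb{F}_p)$, $r_p(G) = \dim_{\mathbb{F}_p} H^2(G,\mathbb{F}_p)$, and similarly for the decomposition subgroups $G_{\fp} \subset G$ at a chosen prime of $L$ above each $\fp \in S$, producing an unramified Galois $p$-extension $\tilde{L}/L$ in which $T_L$ splits completely is equivalent, via class field theory and the solution of embedding problems, to exhibiting independent classes in a Shafarevich--Tate-type group of the shape
\begin{equation*}
\mathrm{Sha}^2(G,S,T_k) \;=\; \ker\Bigl(H^2(G,\mathbb{F}_p) \longrightarrow \bigoplus_{\fp \in S} H^2(G_{\fp},\mathbb{F}_p) \;\oplus\; \bigoplus_{\fq \in T_k} H^2(D_{\fq},\mathbb{F}_p)\Bigr),
\end{equation*}
where $D_{\fq}$ is the decomposition group at a prime of $L$ above $\fq \in T_k$. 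Local triviality at $\fp \in S$ encodes unramifiedness of $\tilde{L}/L$ (crucially using that $L/K$ is already unramified and that all ramification of $K/k$ sits on $S$), while local triviality at $T_k$ encodes the complete splitting of $T_L$.

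Next I would bound $\dim \mathrm{Sha}^2(G,S,T_k)$ from below via the function field Poitou--Tate nine-term exact sequence. An Euler-characteristic calculation should show that the local contributions at $S$ give back exactly $\sum_{\fp \in S}(r_p(G_{\fp}) - d_p(G_{\fp}))$, that the $\abs{T_k}$ split-completely conditions contribute $\abs{T_k} - 1$ (the missing $-1$ reflecting the reciprocity law $\sum_{\fp} \mathrm{inv}_{\fp} = 0$, which makes one local condition dependent on the others), and that the $S$-units modulo $p$-th powers, sitting inside the Selmer-style subgroup via principal ideles, contribute the correction $-d_p(\Delta_{S})$. Assembling these pieces yields
\begin{equation*}
\dim \mathrm{Sha}^2(G,S,T_k) \;\geq\; r_p(G) - d_p(G) - \sum_{\fp \in S}\bigl(r_p(G_{\fp}) - d_p(G_{\fp})\bigr) - (\abs{T_k} - 1) + d_p(\Delta_{S}) \;=\; \delta.
\end{equation*}
Given $\delta > 0$ independent classes, one realises the desired $\tilde{L}/L$ by class field theory, with the constant field controlled exactly as in the $+1$ term of Proposition \ref{prop626}. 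The ``In other words'' inequality then follows by contraposition: if $L$ is the maximal unramified $p$-extension, then no such $\tilde{L}$ exists, forcing $\delta \leq 0$.

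The main obstacle is the careful characteristic-$p$ bookkeeping. Because $\mu_p$ is trivial, the duality pairings must be set up via Artin--Schreier theory rather than Kummer theory; local duality at wildly ramified primes in $S$ requires the appropriate form of Serre's local duality, rather than the Tate-local-duality reflex available in characteristic zero; and the constant field of $L$ has to be tracked throughout so that classes in $\mathrm{Sha}^2$ do not correspond merely to extensions of the constant field (a point already visible in the $+1$ appearing in Proposition \ref{prop626}). Each of these adjustments is handled in \cite{Kuh02}, essentially by replacing the cyclotomic inputs of \cite{HaMa02} with their characteristic-$p$ substitutes and otherwise preserving the line of argument.
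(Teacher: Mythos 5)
Your high-level framing (classes in $H^2(G,\zpz)$ satisfying local conditions at $S$ and $T_k$ correspond to the desired extensions $\tilde{L}/L$) matches the spirit of the paper, but the mechanism you propose for counting them does not work and hides the entire technical content of the theorem. First, there is no Poitou--Tate nine-term sequence available here: the module is $\zpz$ in characteristic $p$ (you note $\mu_p$ is trivial, but the problem is worse than a bookkeeping adjustment --- local and global duality for $p$-torsion coefficients in characteristic $p$ fail in the form you need), and even if a substitute existed it would compute cohomology of the full Galois group $\fG_{S,p}$ of the maximal $p$-extension unramified outside $S$, not of the finite quotient $G=\Gal(L/k)$ whose $H^2$ is what $r_p(G)$ measures. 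The only duality-flavored input the paper actually uses is the vanishing $H^2(\fG_{S,p},\zpz)=1$, which serves solely to solve embedding problems (Proposition \ref{prop81}); everything else is done by hand.

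Second, and more seriously, your definition of $\mathrm{Sha}^2$ imposes \emph{vanishing} of the restriction at each $\fp\in S$, which would only yield the bound $\delta' = r_p(G)-\sum_{\fp\in S}r_p(G_{\fp})-\cdots$, strictly weaker than the claimed $\delta$ with $r_p(G_{\fp})-d_p(G_{\fp})$. The correct local condition is that the restriction lie in the \emph{unramified} cohomology $H^2_{nr}(G_{\fp},\zpz)$ (Proposition \ref{prop633}), which only forces the resulting extensions to have small residual ramification over $L$; recovering unramifiedness while losing only $d_p(G_{\fp})$ per prime is exactly Steps 2 and 3 of the paper's proof, and it requires the Hasse--Arf theorem, the fact that upper ramification jumps of elementary abelian $p$-extensions are prime to $p$ (Lemma \ref{lem518}), the inertia bound of Lemma \ref{lem523}, and the generator-rank formula of Theorem \ref{thm626}, through which $d_p(\Delta_S)$ enters (via the telescoping of the groups $\Delta_{S,\nu}$), not through an $S$-unit term in a Selmer group. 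Your assertion that ``an Euler-characteristic calculation should show'' these contributions is precisely the statement being proved. Finally, the $-(\abs{T_k}-1)$ does not arise from the reciprocity relation $\sum_{\fp}\mathrm{inv}_{\fp}=0$: in the actual argument one kills the Frobenius elements at $T_k$ at the very end, losing $\abs{T_k}$, and the $+1$ comes from the constant-field term in Proposition \ref{prop626}. As written, the proposal is a plausible-sounding reduction to machinery that is not available in this setting, and it would need to be replaced by the constructive embedding-problem and ramification-removal argument to close the gap.
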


We first recall some preliminary results.

\subsection{The embedding problem}

The embedding problem is the induction step in the construction of field extensions with prescribed Galois group. We will outline the necessary background in this subsection. For details, see \cite{Neu73,NSW08}.

\begin{definition}
Let $\fG$ be a profinite group. An \textit{embedding problem} $\CE(\fG)$ is a diagram
\begin{equation*}
\xymatrix{
&&& \fG \ar@{->}[d]^{\phi} \\
\epsilon:0\ar@{->}[r] & A\ar@{->}[r] & E\ar@{->}[r]^-j & G\ar@{->}[r] & 1
}
\end{equation*}
where $A$, $E$, $G$ are finite groups, $A$ is abelian, $\epsilon$ is a short exact sequence and $\phi$ is surjective. A solution to the problem is a continuous homomorphism $\chi:\fG\rightarrow E$ making the above diagram commutative.
\begin{equation*}
\xymatrix{
&&& \fG \ar@{->}[d]^{\phi} \ar@{.>}[dl]_{\chi} \\
\epsilon:0\ar@{->}[r] & A\ar@{->}[r] & E\ar@{->}[r]^-j & G\ar@{->}[r] & 1
}
\end{equation*}
The solution $\chi$ is called proper if it is surjective. The group $A$ is called the \textit{kernel} of the embedding problem $\CE(\fG)$.
\end{definition}

The following proposition is useful to determine if an embedding problem has a solution. Let $inf_{\fG}^G:H^2(G,A)\rightarrow H^2(\fG,A)$ be the inflation map induced from $\phi:\fG\rightarrow G$.
\begin{prop}[Hoechsmann]\label{propHoech}
The embedding problem $\CE(\fG)$ has a solution if and only if $inf_{\fG}^G(\epsilon)=1$.
\end{prop}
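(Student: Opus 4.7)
The plan is to reduce the question to the classical $H^2$-classification of group extensions. Recall that the extension class $[\epsilon]\in H^2(G,A)$ attached to the short exact sequence $\epsilon$ is trivial precisely when $\epsilon$ admits a group-theoretic splitting of $j$. By naturality of this classification with respect to the surjection $\phi\colon \fG\to G$, the inflated class $inf_{\fG}^G([\epsilon])\in H^2(\fG,A)$ is represented by the pulled-back extension
\begin{equation*}
\tilde{\epsilon}\colon 0 \to A \to \fG\times_G E \to \fG \to 1,
\end{equation*}
where $\fG\times_G E=\{(g,e)\in\fG\times E : \phi(g)=j(e)\}$ and the right-hand map is the projection onto $\fG$. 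Consequently, $inf_{\fG}^G(\epsilon)=1$ if and only if $\tilde{\epsilon}$ splits.

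The second step is to set up a natural bijection between solutions of $\CE(\fG)$ and continuous splittings of $\tilde{\epsilon}$. Given a solution $\chi\colon \fG\to E$ with $j\circ\chi=\phi$, define $s\colon\fG\to\fG\times_G E$ by $s(g)=(g,\chi(g))$; the relation $\phi(g)=j(\chi(g))$ guarantees that $s$ takes values in the fiber product, and $s$ is manifestly a continuous homomorphism sectioning the first projection. Conversely, a continuous section $s$ produces a continuous homomorphism $\chi=\pi_E\circ s\colon \fG\to E$, where $\pi_E$ denotes projection onto $E$, and the identity $j\circ\chi=\phi$ is built into the fiber-product definition. These two constructions are evidently mutually inverse.

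Putting the pieces together, a solution of $\CE(\fG)$ exists if and only if $\tilde{\epsilon}$ admits a continuous splitting, which occurs if and only if $inf_{\fG}^G(\epsilon)=1$ in continuous cohomology. The main point requiring care is the profinite bookkeeping: $\fG$ is profinite while $A$, $E$, $G$ are finite and discrete, so one must work with continuous cochains throughout and verify that the equivalence between splittings of a continuous extension and the vanishing of the corresponding class in continuous $H^2(\fG,A)$ really does hold in this setting. Once this standard framework (as developed in \cite{NSW08}) is in place, the proof is formal and no real computation is required.
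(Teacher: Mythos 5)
Your argument is correct, and it is essentially the standard proof underlying the cited result: the paper itself does not prove the proposition, but simply refers to \cite[Prop.\ 3.5.9]{NSW08} and Hoechsmann's original paper. What you wrote is a faithful reconstruction of that argument: the inflation $inf_{\fG}^G(\epsilon)$ is represented by the pulled-back (fiber-product) extension of $\fG$ by $A$; solutions $\chi$ of $\CE(\fG)$ are in natural bijection with continuous homomorphic sections $s(g)=(g,\chi(g))$ of that extension; and a (topological) group extension splits if and only if its class in $H^2$ vanishes. You are also right to flag the profinite bookkeeping as the one nontrivial point: since $A$, $E$, $G$ are finite (with the discrete topology) and $\fG\times_G E$ is a closed, finite-index subgroup of the profinite group $\fG\times E$, the pullback is a bona fide profinite extension, continuous cochain cohomology applies, and the usual dictionary between continuous $H^2$ and continuous extensions holds; this is exactly the framework the cited chapter of \cite{NSW08} sets up. So your proof supplies the details that the paper leaves to the reference, using the same approach as that reference.
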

\begin{proof}
See \cite[Prop. 3.5.9]{NSW08}. The original proof by Hoechsmann can be found in \cite{Hoe68}.
\end{proof}

Now let $\fG_K$ be the absolute Galois group of $K$, let $L/K$ be a finite Galois extension, and let $G=\Gal(L/K)$ be its Galois group. Let $S$ be a set of primes in $K$ and $S_L$ the set of primes in $L$ lying above $S$. The global embedding problem $(L/K,\epsilon,S_L)$ is defined as follows.

\begin{definition}
Let $L/K$ be a finite extension, and $G=\Gal(L/K)$ its Galois group. Let
\begin{equation*}
\xymatrix@1{
\epsilon:0\ar@{->}[r] & A\ar@{->}[r] & E\ar@{->}[r]^-j & G\ar@{->}[r] & 1
}
\end{equation*}
be an exact sequence, where $A$, $E$ are finite groups with $A$ abelian. A \textit{global embedding problem} $(L/K,\epsilon,S_L)$ is a diagram
\begin{equation*}
\xymatrix{
&&& \fG_K \ar@{->}[d]^{\phi} \\
\epsilon:0\ar@{->}[r] & A\ar@{->}[r] & E\ar@{->}[r]^-j & G\ar@{->}[r] & 1
}
\end{equation*}
where $\phi$ is the restriction map.

A \textit{solution} of the embedding problem is a continuous homomorphism $\chi:\fG_K\rightarrow E$ making the diagram
\begin{equation*}
\xymatrix{
&&& \fG_K \ar@{->}[d]^{\phi} \ar@{.>}[dl]_{\chi} \\
\epsilon:0\ar@{->}[r] & A\ar@{->}[r] & E\ar@{->}[r]^-j & G\ar@{->}[r] & 1
}
\end{equation*}
commutative, and such that $M/L$ is unramified outside $S_L$, where $M$ is the fixed field of the kernel of $\chi$. The solution $\chi$ is called \textit{proper} if it is surjective.
\end{definition}

Note that in the above setting, a proper solution of the embedding problem $(L/K,\epsilon,S_L)$ corresponds to a Galois extension $M/K$, with $\Gal(M/L)=A$ and is unramified outside $S_L$. By abuse of notation we also say that $M$ is a solution of $(L/K,\epsilon,S_L)$. One important fact we need is that if $L/K$ is a $p$-extension, the embedding problem with kernel $\zpz$ is solvable. More precisely, we have the following.

\begin{prop}[Theorem 8.1 of \cite{Kuh02}] \label{prop81}
Let $L/K$ be a Galois $p$-extension, and let $S_L$ be a finite, non-empty set of primes in $L$ containing the ramified primes of $L/K$. Let
\begin{equation*}
\xymatrix@1{
\epsilon:0\ar@{->}[r] & \zpz\ar@{->}[r] & E\ar@{->}[r] & G(L/K) \ar@{->}[r] & 1
}
\end{equation*}
be a non-split extension (meaning that $\epsilon$ is a non-split exact sequence). If $S_L\cap K=S_K$, then the global embedding problem $(L/K,\epsilon,S_L)$ has a proper solution $M$ with the same constant field as $L$.
\end{prop}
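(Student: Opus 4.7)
The plan is to apply Hoechsmann's criterion (Proposition \ref{propHoech}) with $\fG=\Gal(\Omega/K)$, where $\Omega$ is the maximal Galois extension of $K$ such that $\Omega/L$ is a $p$-extension, is unramified outside $S_L$, and has the same constant field as $L$. A proper solution $\chi:\fG\to E$ of the resulting embedding problem $\CE(\fG)$ then cuts out the desired field $M$, since its kernel fixes an extension $M/L$ with $\Gal(M/L)=\zpz$ sitting inside $\Omega$ and therefore automatically satisfies all the required conditions.

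First I would establish solvability, i.e.\ that the inflation of $\epsilon$ to $H^2(\fG,\zpz)$ is trivial. The tool is the five-term inflation-restriction sequence
\begin{equation*}
0\to H^1(G,\zpz)\to H^1(\fG,\zpz)\to H^1(N,\zpz)^G\xrightarrow{\mathrm{trg}} H^2(G,\zpz)\to H^2(\fG,\zpz),
\end{equation*}
where $N=\Gal(\Omega/L)$. It suffices to produce a class in $H^1(N,\zpz)^G$ whose transgression equals the class of $\epsilon$. Such a class corresponds to a $G$-equivariant surjection $N\twoheadrightarrow\zpz$, and by class field theory for $L$ this amounts to a $G$-stable $\zpz$-quotient of the appropriate ray class group of $L$, cut down by the ramification and constant-field conditions built into $\Omega$. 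The hypothesis $S_L\cap K=S_K$ is used precisely here: it guarantees that the primes which carry the ramification data of $L/K$ pull back to primes in $S_L$, making the local-global matching work at the level of characters of $N$.

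Second, properness follows for free from the non-split hypothesis on $\epsilon$: given any solution $\chi:\fG\to E$, the image $\chi(\fG)$ surjects onto $G$ via $j$; if it were a proper subgroup it would necessarily meet $\zpz$ trivially and hence give a splitting of $\epsilon$, contradicting the assumption. Thus every solution is automatically surjective. The constant-field preservation is encoded in the definition of $\fG$: solutions factoring through $\fG$ correspond to extensions $M/L$ with the same constant field as $L$. If one starts with a solution living in a larger Galois group that might enlarge the constant field, one twists $\chi$ by a suitable character coming from the arithmetic $\zpz$-extension of $L$ to push it into $\fG$; this twisting is possible because the set of solutions of a solvable embedding problem is a torsor over $H^1(G,\zpz)$.

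The main obstacle is the first step: constructing the preimage under the transgression in the constrained setting where $\fG$ has been cut down by both a ramification condition and a constant-field condition. One must verify that the character of $N$ realising $\epsilon$ can be chosen to respect both constraints simultaneously, and that no local obstruction at a prime in $S_K$ survives. This is the technical heart of Kuhnt's proof and is where the hypothesis $S_L\cap K=S_K$ plays its decisive role, by aligning the ramification data at the $K$-level with that at the $L$-level so that the inflation of $\epsilon$ already lies in the image of the transgression.
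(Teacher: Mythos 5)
Your approach is genuinely different from the paper's, but it has a gap at exactly the step you yourself flag as ``the main obstacle,'' and the gap is fatal.

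The paper's proof works with the \emph{unconstrained} group $\fG_{S_{K,p}}=\Gal(K_{S_K}(p)/K)$, the Galois group of the maximal pro-$p$-extension of $K$ unramified outside $S_K$ (no condition on the constant field, no descent to $S_L$). The decisive input is the fact, cited from \cite[Chapter~VIII]{NSW08}, that for a global function field of characteristic $p$ and a non-empty $S_K$ one has $H^2(\fG_{S_{K,p}},\zpz)=0$. Once the target of the inflation map is the zero group, $\inf_{\fG_{S_{K,p}}}^{G}(\epsilon)=1$ is automatic, and Hoechsmann's criterion (Proposition~\ref{propHoech}) yields a solution. Properness follows from the non-split hypothesis as you correctly observe, and the remaining normalizations (staying unramified above $S_L$, not enlarging the constant field) are handled by twisting the solution by a character in $H^1(\fG_{S_{K,p}},\zpz)$, a step that is available precisely because $S_K\ne\varnothing$ and $S_L\cap K=S_K$.

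Your plan, by contrast, cuts $\fG$ down to $\Gal(\Omega/K)$ where $\Omega$ is already constrained to preserve the constant field and to be unramified outside $S_L$, and then tries to exhibit $\epsilon$ in the image of the transgression map from $H^1(N,\zpz)^G$. But in the five-term inflation-restriction sequence the image of transgression equals the kernel of inflation, so ``produce a preimage under transgression'' is not a reduction of the problem: it is a restatement of $\inf(\epsilon)=1$, the very thing to be shown. You then write that this restated goal is ``the technical heart of Kuhnt's proof'' and leave it undone. That is the gap: solvability is never established. Moreover, by building the constant-field and ramification constraints into $\fG$ up front you deprive yourself of the one fact that makes the argument trivial, namely the vanishing of $H^2$ for the unconstrained pro-$p$ group $\fG_{S_{K,p}}$; the constrained $\Gal(\Omega/K)$ need not have vanishing $H^2$, so the local-to-global bookkeeping you allude to would actually have to be carried out, and you do not carry it out. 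Your second part (properness from non-splitness, twisting to fix the constant field) is fine, but it presupposes a solution, which is exactly what is missing.
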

\begin{proof}
Let $\fG_{S_{K,p}}$ be the Galois group of the maximal pro-$p$-extension of $K$ unramified outside $S_K$. Then we have the diagram
\begin{equation*}
\xymatrix{
&&& \fG_{S_{K,p}} \ar@{->}[d]^{\phi} \ar@{.>}[dl]_{\chi} \\
\epsilon:0\ar@{->}[r] & \zpz \ar@{->}[r] & E \ar@{->}[r]^-j & G(L/K) \ar@{->}[r] & 1
}
\end{equation*}
where $\phi$ is the restriction map. Since $H^2(\fG_{S_{K,p}},\zpz)$ is trivial (see \cite[Chapter VIII]{NSW08}), we must have $inf^{G(L/K)}_{\fG_{S_{K,p}}}(\epsilon)=1$. The proposition now follows from Proposition \ref{propHoech}.
\end{proof}

\subsection{Independence of solutions to the global embedding problems}

Let $L/K$ be a Galois $p$-extension with Galois group $G=\Gal(L/K)$. Fix a finite, non-empty set $S_L$ of places in $L$ containing the ramified places of $L/K$. For each $\epsilon\in H^2(G,\zpz)$, the corresponding global embedding problem $(L/K,\epsilon,S_L)$ has a proper solution by Proposition \ref{prop81}. The main result in this section is the following.

\begin{prop}\label{prop318}
Let $\epsilon_1, \ldots, \epsilon_n$ be $n$ linearly independent elements in $H^2(G,\zpz)$, and let $M_1, \ldots, M_n$ be proper solutions to the global embedding problems corresponding to $\epsilon_i$. Let $M=M_1\ldots M_n$ be the compositum of the solutions. Then $d_p(\Gal(M/L))=n$.
\end{prop}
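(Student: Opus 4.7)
The strategy is to translate the statement into group cohomology: realize each proper solution $M_i$ via a character of a suitable subgroup of a restricted pro-$p$ Galois group, and use the linear independence of the $\epsilon_i$ in $H^2$ to force linear independence of these characters, from which the generator rank $n$ will follow.

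I would first realize each proper solution Galois-theoretically. Since $L/K$ is a $p$-extension unramified outside $S_K$ and each $M_i/K$ is a $p$-extension (as $E_i$ is a $p$-group, being an extension of the $p$-group $G$ by $\zpz$) unramified outside $S_K$, the surjection $\chi_i\colon \fG_K\to E_i$ defining $M_i$ factors through $\fG_{S_K,p}$, the Galois group of the maximal pro-$p$ extension of $K$ unramified outside $S_K$. Setting $H=\Gal(K_{S_K,p}/L)$, the restriction $\chi_i|_H\colon H\to \zpz$ is a surjective character whose kernel is normal in $\fG_{S_K,p}$. The induced action of $G=\fG_{S_K,p}/H$ on $\zpz$ is trivial (as $\mathrm{Aut}(\zpz)$ has order coprime to $p$), so $\chi_i|_H\in H^1(H,\zpz)^G$.

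Next, I would apply the Hochschild--Serre five-term exact sequence
\begin{equation*}
H^1(G,\zpz)\to H^1(\fG_{S_K,p},\zpz)\to H^1(H,\zpz)^G \xrightarrow{\mathrm{tg}} H^2(G,\zpz)\to H^2(\fG_{S_K,p},\zpz).
\end{equation*}
The key identification is that the transgression of $\chi_i|_H$ equals $\epsilon_i$, since the extension class of the quotient $\fG_{S_K,p}/\ker\chi_i\cong E_i$ over $G$ is by construction $\epsilon_i$. Hence any relation $\sum_i a_i\chi_i|_H=0$ in $H^1(H,\zpz)$ transgresses to $\sum_i a_i\epsilon_i=0$, forcing all $a_i=0$ by hypothesis. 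So $\chi_1|_H,\dots,\chi_n|_H$ are $\zpz$-linearly independent in $H^1(H,\zpz)$.

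Finally, I would convert this into a statement about $\Gal(M/L)$. Each $\chi_i$ factors through the common quotient $\Gal(M/L)=\fG_L/\bigcap_i\fG_{M_i}$, and together they define a map $\phi\colon \Gal(M/L)\to (\zpz)^n$ which is injective by construction of $M$. Surjectivity of $\phi$ is equivalent to linear independence of the $\chi_i$ in $\mathrm{Hom}(\Gal(M/L),\zpz)$, and this is inherited from their independence in $H^1(H,\zpz)$ via the natural surjection $H\twoheadrightarrow \Gal(M/L)$. Thus $\Gal(M/L)\cong (\zpz)^n$ and $d_p(\Gal(M/L))=n$. The main technical point will be carefully verifying the identification $\mathrm{tg}(\chi_i|_H)=\epsilon_i$; this is standard in profinite group cohomology but requires one to match sign conventions with the specific $\epsilon_i$ chosen for the embedding problems.
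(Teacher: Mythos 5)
Your proof is correct, but it follows a genuinely different route from the paper's. The paper argues by contradiction: it sets $\tilde{M}_i = M_1\cdots M_i$, picks the smallest $j$ with $d_p(\Gal(\tilde{M}_{j+1}/L)) = d_p(\Gal(\tilde{M}_j/L)) = j$, deduces $M_{j+1}\subseteq\tilde{M}_j$, invokes Lemmas~\ref{lem314} and \ref{lem315} (identifying $\Gal(\tilde{M}_j/K)$ with the fibered product $\Gal(M_1/K)\times_G\cdots\times_G\Gal(M_j/K)$ and hence $\tilde{M}_j$ with $(\epsilon_1,\ldots,\epsilon_j)\in H^2(G,(\zpz)^j)$), and then uses functoriality of the correspondence between group extensions and $H^2$ under linear maps $(\zpz)^j\to\zpz$ to conclude that $\epsilon_{j+1}$ lies in the span of $\epsilon_1,\ldots,\epsilon_j$, a contradiction. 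You instead pass to $H=\Gal(K_{S_K,p}/L)$, realize each $M_i$ via $\chi_i|_H\in H^1(H,\zpz)^G$, and use the transgression map in the Lyndon--Hochschild--Serre five-term exact sequence, verifying $\mathrm{tg}(\chi_i|_H)=\epsilon_i$ by identifying $\fG_{S_K,p}/\ker\chi_i\cong E_i$ as the pushout (or, concretely, by computing the transgression cocycle from a set-theoretic section of $\fG_{S_K,p}\to G$). Linearity of $\mathrm{tg}$ then transfers independence of the $\epsilon_i$ to independence of the $\chi_i|_H$, and the pullback along the surjection $H\twoheadrightarrow\Gal(M/L)$ finishes. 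Your route is more conceptual and avoids the fibered-product bookkeeping of Lemmas~\ref{lem314}--\ref{lem315}, at the cost of invoking the spectral-sequence machinery and the transgression identity; the paper's route is more elementary and self-contained, explicitly exhibiting $\tilde M_j$ as a fibered-product realization of the tuple class. One point worth spelling out in your write-up: the $G$-invariance of $\chi_i|_H$ rests on the fact that $\zpz=\ker j$ is \emph{central} in $E_i$ (equivalently, that the embedding problem is a central extension, which is how the coefficient module $\zpz$ is made a trivial $G$-module); ``$\mathrm{Aut}(\zpz)$ has order coprime to $p$'' only shows the coefficient action is trivial, not directly that conjugation by lifts of $G$ fixes $\chi_i|_H$ pointwise, so the centrality should be stated.
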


To prove Proposition \ref{prop318}, we start with some observations. Let $E(G,A)$ be the set of equivalence classes of central extensions of a group $G$ by an abelian group $A$. It is well-known that there is a bijection
\begin{equation}\label{eqnHEcor}
\xymatrix@1{
E(G,A) \ar@{<->}[r]^{\sim} & H^2(G,A)
}
\end{equation}
functorial at the slot of $A$. See \cite[Sec. 6.6]{Wei94} for details.

\begin{lemma}\label{lem314}
Let $G$ be a group, and let $A_1, \ldots, A_n$ be abelian groups. For each $1\leq i \leq n$, let $\epsilon_i\in H^2(G,A_i)$ be the element corresponding to the extension
\begin{equation*}
\xymatrix@1{
\epsilon_i:0\ar@{->}[r] & A_i \ar@{->}[r] & E_i\ar@{->}[r] & G \ar@{->}[r] & 1
}
\end{equation*}
under the bijection \eqref{eqnHEcor}. Then the element $(\epsilon_1,\ldots,\epsilon_n)\in H^2(G,\oplus_{i=1}^n A_i)=\oplus_{i=1}^n H^2(G,A_i)$ corresponds to the extension
\begin{equation*}
\xymatrix@1{
\epsilon_i:0\ar@{->}[r] & \oplus_{i=1}^n A_i \ar@{->}[r] & E_1\times_G E_2 \times_G \ldots \times_G E_n \ar@{->}[r] & G \ar@{->}[r] & 1.
}
\end{equation*}
\end{lemma}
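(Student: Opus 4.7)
The plan is to exploit the functoriality of the bijection \eqref{eqnHEcor} in its abelian-group argument, under which a pushforward $H^2(G,A)\to H^2(G,A')$ along a homomorphism $A\to A'$ corresponds to forming the pushout of the central extension along the same homomorphism. Granted this, I would proceed by induction on $n$, with $n=1$ trivial. For the inductive step, the associativity
\[E_1 \times_G \cdots \times_G E_n = (E_1 \times_G \cdots \times_G E_{n-1}) \times_G E_n\]
combined with the identification $H^2(G,\oplus_{i=1}^{n}A_i)=H^2(G,\oplus_{i=1}^{n-1}A_i)\oplus H^2(G,A_n)$ reduces everything to the case $n=2$.

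For $n=2$, I would first verify directly that the fiber product $E:=E_1\times_G E_2$, equipped with the common projection to $G$, fits into a central extension
\[0 \to A_1\oplus A_2 \to E \to G \to 1,\]
where $A_1\oplus A_2$ embeds as the pairs in the respective kernels. Let $\epsilon\in H^2(G,A_1\oplus A_2)$ denote its class. Since the isomorphism $H^2(G,A_1\oplus A_2)\cong H^2(G,A_1)\oplus H^2(G,A_2)$ is realized by pushforward along the two projections $p_i\colon A_1\oplus A_2\to A_i$, it suffices to check $(p_i)_*\epsilon=\epsilon_i$ for each $i$. By the functoriality noted above, $(p_1)_*\epsilon$ is the class of the pushout of $E$ along $p_1$; since $p_1$ is surjective with kernel $\{0\}\oplus A_2$, this pushout is simply the quotient $E/(\{0\}\oplus A_2)$. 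The first-coordinate projection $(e_1,e_2)\mapsto e_1$ then descends to an isomorphism $E/(\{0\}\oplus A_2) \cong E_1$: it is surjective because $E_2\to G$ is surjective, its kernel is exactly $\{0\}\oplus A_2$, and it is compatible with the maps to $G$ and with the inclusion of $A_1$ on the kernel side. Hence $(p_1)_*\epsilon=\epsilon_1$, and the case $i=2$ is symmetric.

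The main obstacle here is purely bookkeeping: translating the abstract naturality of \eqref{eqnHEcor} into the concrete statement that pushforward in $H^2$ is pushout of extensions, and then verifying that $E/(\{0\}\oplus A_2)$ is isomorphic to $E_1$ \emph{as an extension of $G$ by $A_1$}, with every identification matching up. No deep input enters the argument; the only slightly non-formal step is the initial check that $E_1\times_G E_2$ really is a central extension of $G$ by $A_1\oplus A_2$, which is a straightforward diagram chase.
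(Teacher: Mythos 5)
Your proof is correct and takes essentially the same approach as the paper: the paper likewise reduces to $n=2$ by induction and then invokes a ``detailed tracing of the bijection'' for that base case, of which your argument via the naturality of the correspondence (pushforward on $H^2$ corresponds to pushout of central extensions) together with the identification $E/(\{0\}\oplus A_2)\cong E_1$ is a clean and complete realization.
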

\begin{proof}
For $n=2$ the lemma follows from a detailed tracing of the bijection in \eqref{eqnHEcor}. The case for general $n$ then follows by induction.
\end{proof}

\begin{lemma}\label{lem315}
Let $L, K, S_L$ and $G$ be as in the first paragraph of this subsection. For $\epsilon_1, \ldots, \epsilon_n\in H^2(G,\zpz)$, let $M_1,\ldots,M_n$ be solutions to the global embedding problems $(L/K,\epsilon_i, S_L)$. If we have $M_i\cap\prod_{j\neq i}M_j = L$ for all $i$, then $M$ is a solution to the global embedding problem $(L/K, (\epsilon_1,\ldots,\epsilon_n) , S_L)$.
\end{lemma}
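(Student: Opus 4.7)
The plan is to verify step-by-step that the compositum $M=M_1\cdots M_n$ realizes precisely the extension produced in Lemma \ref{lem314}. First I would analyze the Galois structure of $M/K$. Since each $M_i/K$ is Galois with group $E_i$ (it solves the embedding problem with class $\epsilon_i$), the compositum $M/K$ is Galois, and the restriction maps assemble into an injection
\[
\rho:\Gal(M/K)\hookrightarrow\Gal(M_1/K)\times\cdots\times\Gal(M_n/K)\cong E_1\times\cdots\times E_n.
\]
Because any automorphism of $M/K$ restricts to a single automorphism of $L$, the image of $\rho$ lands in the fibered product $E_1\times_G\cdots\times_G E_n$ over $G=\Gal(L/K)$.

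Next I would use the disjointness hypothesis to control degrees. From $M_i\cap\prod_{j\ne i}M_j=L$ and $[M_i:L]=p$, an induction on $n$ using the standard linear-disjointness isomorphism $\Gal(FF'/F)\cong\Gal(F'/F\cap F')$ yields $[M:L]=p^n$ and $\Gal(M/L)\cong\bigoplus_{i=1}^{n}\Gal(M_i/L)\cong(\zpz)^n$. Since $|E_1\times_G\cdots\times_G E_n|=|G|\cdot p^n=[M:K]$, the injection $\rho$ is a bijection. Unwinding what $\rho$ does on the kernel $\Gal(M/L)$ and on the quotient $G$ yields a commutative diagram of short exact sequences with identical kernel and quotient, which, combined with Lemma \ref{lem314}, identifies the class in $H^2(G,\bigoplus_i\zpz)$ of the extension $\Gal(M/K)\to G$ with $(\epsilon_1,\ldots,\epsilon_n)$.

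Finally I would verify the ramification requirement. For a place $\fP$ of $L$ not in $S_L$, each $M_i/L$ is unramified at $\fP$ by hypothesis; the ramification index of any place of $M$ above $\fP$ divides the product of the corresponding ramification indices in the factors $M_i/L$, each of which is $1$. Hence $M/L$ is unramified outside $S_L$, so $M$ is indeed a solution to $(L/K,(\epsilon_1,\ldots,\epsilon_n),S_L)$.

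The main obstacle, as I see it, is the bookkeeping in the middle step: one must check that $\rho$ is compatible with the kernel-quotient splitting on both sides, i.e.\ that under $\rho$ the subgroup $\Gal(M/L)\subset\Gal(M/K)$ corresponds to $\bigoplus_i\zpz\subset E_1\times_G\cdots\times_G E_n$ and that the induced map to $G$ on quotients is the identity. This is formal from the definition of the fibered product and the naturality of the bijection \eqref{eqnHEcor}, but it is precisely what pins down the correct cohomology class $(\epsilon_1,\ldots,\epsilon_n)$ rather than some other element of $H^2(G,\bigoplus_i\zpz)$ with the same kernel and quotient.
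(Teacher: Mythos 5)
Your argument is correct and is essentially the paper's own proof: the paper simply observes that the disjointness hypothesis gives $\Gal(M/K)\cong\Gal(M_1/K)\times_G\cdots\times_G\Gal(M_n/K)$ and then invokes Lemma \ref{lem314}, which is exactly your middle step spelled out in more detail. Your additional care with the degree count identifying $\rho$ as an isomorphism onto the fibered product, and the explicit check that $M/L$ is unramified outside $S_L$, are details the paper leaves implicit but are welcome.
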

\begin{proof}
If $M_i\cap\prod_{j\neq i}M_j = L$ for all $i$, then we know from Galois theory that
\begin{equation*}
\Gal(M/K) = \Gal(M_1/K) \times_G \ldots \times_G \Gal(M_n/K).
\end{equation*}
The lemma now follows from Lemma \ref{lem314} directly.
\end{proof}

\begin{proof}[Proof of Proposition \ref{prop318}]
Let $\tilde{M}_i = M_1M_2\ldots M_i$, thus $M=\tilde{M}_n$. Clearly $d_p(\Gal(M/L))\leq n$. Suppose that $d_p(\Gal(M/K))<n$. Let $j$ be the smallest integer $j$ such that
\begin{equation}\label{eqncond318}
d_p(\Gal(\tilde{M}_j)/L)=d_p(\Gal(\tilde{M}_{j+1})/L)=j.
\end{equation}
In this case we have $M_{j+1}\subseteq \tilde{M}_j$ since each $\Gal(M_i/L)\cong \zpz$. The extensions $M_i/L$ are Galois of degree $p$, so they are Artin-Schreier extensions \cite[Theorem VI.6.4]{Lan02}. Hence $\Gal(\tilde{M}_j/L)\cong(\zpz)^j$. The degree $p$ subfields of $\tilde{M}_j$ are the fixed fields of the kernels of the non-trivial homomorphisms $(\zpz)^j\rightarrow\zpz$, which are linear maps. These maps induce linear maps $H^2(G,(\zpz)^j)\rightarrow H^2(G,\zpz)$ on the cohomology groups. In particular, the homomorphism $\phi:\Gal(\tilde{M}_j/L)\rightarrow \Gal(M_{j+1}/L)$ that realizes the extension $\tilde{M}_j/M_{j+1}/L$ corresponds to the map of extensions
\begin{equation*}
\xymatrix{
\epsilon:1\ar@{->}[r] & \Gal(\tilde{M}_j/L) \ar@{->}[r] \ar@{->>}[d]^{\phi} & \Gal(\tilde{M}_j/K) \ar@{->}[r] \ar@{->>}[d]^{\tilde{\phi}} & G \ar@{->}[r] \ar@{=}[d] & 1 \\
\phi^{\ast}\epsilon:1\ar@{->}[r] & \Gal(M_{j+1}/L) \ar@{->}[r] & \Gal(M_{j+1}/K) \ar@{->}[r] & G \ar@{->}[r] & 1.
}
\end{equation*}

By \eqref{eqncond318}, the fields $M_1, \ldots, M_j$ satisfy $M_i\cap\prod_{l\neq i}M_l = L$ for all $i$. Therefore, $\tilde{M}_j$ corresponds to the element $\epsilon=(\epsilon_1,\ldots,\epsilon_j)$ by Lemma \ref{lem315}, and the $p$ subextension $M_{j+1}$ of $\tilde{M}_j$ corresponds to $\phi^{\ast}(\epsilon)$, which is a linear combination of $\epsilon_1,\ldots,\epsilon_j$ as $\phi$ is linear. On the other hand, we know that the $p$-extension $M_{j+1}$ corresponds to $\epsilon_{j+1}$. Thus $\epsilon_{j+1}$ is a linear combination of $\epsilon_1,\ldots,\epsilon_j$. This is a contradiction.
\end{proof}

\subsection{Unramified cohomology}

In this subsection we describe the necessary background on unramified cohomology. For details, see \cite{NSW08,Ser02}.

\begin{definition}
Let $L/K$ be a Galois extension of global function fields with Galois group $G$. For a prime $\fp$ in $K$, and a $G$-module $A$, define 
\begin{equation*}
\xymatrix@1{
H^i_{nr}(G_{\fp},A)=\text{im}(H^i(G_{\fp}/I_{\fp},A^{I_{\fp}})\ar@{->}[r]^-{inf} & H^i(G_{\fp},A)),
}
\end{equation*}
where $inf$ is the inflation map.
\end{definition}

We are interested in the unramified cohomology $H^2_{nr}(G_{\fp},\zpz)$. If $L/K$ is a $p$-extension, and $\fp$ is unramified in $L/K$, then
\begin{equation*}
H^2_{nr}(G_{\fp},\zpz)=H^2(G_{\fp},\zpz)\cong\zpz
\end{equation*}
if $G_{\fp}\neq 1$. Let $L/K$ be a $p$-extension of global fields with Galois group $G$ unramified outside a set $S$ of primes of $K$. Define $S_{nr}=\{ \fp\in S|d_p(I_{\fp}G'_{\fp}/G'_{\fp})=d_p(G_{\fp})-1 \}$, where $G'$ is the commutator subgroup of $G$.
\begin{remark}\label{rem632}
Suppose $L/K$ is as above, and $\fp$ is a prime in $S$. Let $L_{\fp}^{ab}/K_{\fp}$ be the maximal abelian subextension of $L_{\fp}/K_{\fp}$. Then the set $S_{nr}$ consists of exactly those primes in $S$ such that $d_p(G_{\fp})=d_p(G_{\fp}^{ab})=d_p(I_{\fp}^{ab})+1$. The other primes $\fp\in S\backslash S_{nr}$ satisfy $d_p(G_{\fp})=d_p(I_{\fp}^{ab})$.
\end{remark}
If $\fp$ is ramified, the unramified cohomology is given by the following proposition.
\begin{prop}[Theorem 6.33 of \cite{Kuh02}] \label{prop633} 
Let $L/K$ be a $p$-extension with Galois group $G$ unramified outside a set $S$ of primes in $K$. Let $\fp\in S$, then
\begin{equation*}
H^2_{nr}(G_{\fp},\zpz)=
\begin{cases}
\zpz &,~\fp\in S_{nr}, \\
1 &, \text{~otherwise}.
\end{cases}
\end{equation*}
\end{prop}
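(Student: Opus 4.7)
The plan is as follows. First, I would translate the defining condition of $S_{nr}$ into an intrinsic structural statement about $G_\fp$. The quotient $G_\fp/I_\fp$ is generated by Frobenius, hence procyclic, and being a $p$-group it is cyclic of order $p^m$ for some $m\ge 0$. Since $G_\fp/I_\fp$ is abelian, $G_\fp'\subseteq I_\fp$, whence $G_\fp^{ab}/I_\fp^{ab}\cong G_\fp/I_\fp$. Combined with the Burnside basis theorem ($d_p(G)=d_p(G^{ab})$ for a finite $p$-group $G$), Remark \ref{rem632} yields the dichotomy: $\fp\notin S_{nr}$ iff $G_\fp=I_\fp$ (i.e., $m=0$), while $\fp\in S_{nr}$ iff $G_\fp/I_\fp$ is a nontrivial cyclic $p$-group (i.e., $m\ge 1$).

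The case $\fp\notin S_{nr}$ is then immediate: $H^2(G_\fp/I_\fp,\zpz)=H^2(1,\zpz)=0$, so the image of inflation vanishes and $H^2_{nr}(G_\fp,\zpz)=0$.

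In the case $\fp\in S_{nr}$, a standard computation of cohomology of a nontrivial cyclic $p$-group with trivial coefficients gives $H^2(G_\fp/I_\fp,\zpz)\cong\zpz$. Since the source has order $p$, proving $H^2_{nr}(G_\fp,\zpz)\cong\zpz$ reduces to showing the inflation map is nonzero, equivalently injective. Applying the Hochschild-Serre five-term exact sequence
\begin{equation*}
H^1(G_\fp,\zpz)\xrightarrow{\mathrm{res}}H^1(I_\fp,\zpz)^{G_\fp/I_\fp}\xrightarrow{\mathrm{tg}}H^2(G_\fp/I_\fp,\zpz)\xrightarrow{\mathrm{inf}}H^2(G_\fp,\zpz),
\end{equation*}
this injectivity is equivalent to surjectivity of the restriction map, i.e., that every Frobenius-invariant character $\phi:I_\fp\to\zpz$ extends to a character of $G_\fp$.

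The main obstacle is establishing this extension property. Picking a Frobenius lift $\tilde\sigma\in G_\fp$, one attempts to define $\tilde\phi$ by prescribing $\tilde\phi|_{I_\fp}=\phi$ and choosing $\tilde\phi(\tilde\sigma)\in\zpz$ freely. The conjugation relations $\tilde\sigma i\tilde\sigma^{-1}$ are handled automatically by the Frobenius-invariance of $\phi$, so the only remaining compatibility is the power relation $\tilde\sigma^{p^m}=i_0\in I_\fp$, which forces $\phi(i_0)=0$ in $\zpz$ (since $p^m\tilde\phi(\tilde\sigma)$ vanishes when $m\ge 1$). The vanishing of $\phi(i_0)$ I would derive from local class field theory: under local reciprocity the image of $\tilde\sigma$ in $G_\fp^{ab}$ corresponds to a uniformizer $\pi\in K_\fp^\ast$, so $\tilde\sigma^{p^m}$ corresponds to the $p$-th power class $\pi^{p^m}=(\pi^{p^{m-1}})^p$, which lies in $pG_\fp^{ab}$. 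The remaining technical point, and in my view the real heart of the proof, is to upgrade this divisibility from $pG_\fp^{ab}$ to $pI_\fp^{ab}$, since these need not agree for a general abelian $p$-group extension; this is where the specific arithmetic of a local $p$-extension in residue characteristic $p$ is essential, and one needs the explicit description of the filtration of $G_\fp^{ab}$ coming from higher unit groups in $K_\fp^\ast$ to conclude.
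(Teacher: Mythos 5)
Your translation of the definition of $S_{nr}$ is incorrect, and this breaks the proof. You claim $\fp\notin S_{nr}$ iff $G_{\fp}=I_{\fp}$, but the defining condition $d_p(I_{\fp}G'_{\fp}/G'_{\fp})=d_p(G_{\fp})-1$ is a genuine rank condition, not merely the statement that Frobenius is nontrivial. Concretely, take $G_{\fp}\cong\mathbb{Z}/p^2\mathbb{Z}$ with $I_{\fp}$ the unique index-$p$ subgroup. Then $G_{\fp}/I_{\fp}\cong\zpz$ is nontrivial (so $m=1\geq 1$), yet $G'_{\fp}=1$ and $d_p(I_{\fp}G'_{\fp}/G'_{\fp})=d_p(\zpz)=1=d_p(G_{\fp})$, so $\fp\notin S_{nr}$. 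Your dichotomy would place this prime in $S_{nr}$ and have you attempt to prove $H^2_{nr}(G_{\fp},\zpz)\cong\zpz$; but here the restriction $H^1(G_{\fp},\zpz)\to H^1(I_{\fp},\zpz)$ is the zero map (every character of $\mathbb{Z}/p^2\mathbb{Z}$ into $\zpz$ kills $p\mathbb{Z}/p^2\mathbb{Z}$), so the transgression is injective, the inflation $H^2(G_{\fp}/I_{\fp},\zpz)\to H^2(G_{\fp},\zpz)$ is zero, and $H^2_{nr}(G_{\fp},\zpz)=0$. So the statement you set out to prove in your ``$\fp\in S_{nr}$'' branch is false for such $\fp$; no amount of local class field theory will rescue it, and the ``technical point'' you flag at the end is not an obstacle to overcome but a genuine failure of the claimed extension property precisely outside $S_{nr}$.

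Your framing via the Hochschild--Serre five-term sequence and the reduction to surjectivity of the restriction map is exactly the paper's, and up to that point the two arguments coincide. But the paper then finishes purely by rank-counting: surjectivity of $\mathrm{res}_1$ is equivalent to $\dim\,\mathrm{im}(\mathrm{res}_1)=d_p(G_{\fp})-1=\dim H^1(I_{\fp},\zpz)^{G_{\fp}/I_{\fp}}$, and the latter dimension is identified with $d_p(I_{\fp}G'_{\fp}/G'_{\fp})$, so the conclusion is literally the definition of $S_{nr}$. In other words, $S_{nr}$ is \emph{defined} to make the restriction map surjective; there is nothing arithmetic left to prove once you read the definition correctly, and the detour through the local reciprocity map and the filtration by higher unit groups is both unnecessary and, in the problematic cases, misleading.
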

\begin{proof}
Define $f$ by $G_{\fp}/I_{\fp}\cong\mathbb{Z}/p^f\mathbb{Z}$. From the exact sequence
\begin{equation*}
1\rightarrow I_{\fp}\rightarrow G_{\fp}\rightarrow G_{\fp}/I_{\fp}\rightarrow 1,
\end{equation*}
we obtain by the Lyndon-Hochschild-Serre spectral sequence (see for example \cite{Wei94}) the following exact sequence.
\begin{equation*}
\xymatrix{
1 \ar@{->}[r] & H^1(G_{\fp}/I_{\fp},\zpz) \ar@{->}[r]^-{inf} & H^1(G_{\fp},\zpz) \ar@{->}[r]^-{res_1} & H^1(I_{\fp},\zpz)^{G_{\fp}/I_{\fp}} \\
& \ar@{->}[r] & H^2(G_{\fp}/I_{\fp},\zpz) \ar@{->}[r]^-{inf_2} & H^2(G_{\fp},\zpz).
}
\end{equation*}
Since $H^2(\mathbb{Z}/p^f\mathbb{Z},\zpz)\cong\zpz$, we have $H^2_{nr}(G_{\fp},\zpz)\cong \zpz$ if and only if $inf_2$ is injective, and $H^2_{nr}(G_{\fp},\zpz)=1$ otherwise. By exactness, $inf_2$ is injective if and only if $res_1$ is surjective. Looking at the $p$-ranks we see that this is equivalent to $d_p(H^1(I_{\fp},\zpz))=d_{\fp}(G_{\fp})-1$. By a little computation we see that $d_p(H^1(I_{\fp},\zpz))=d_p(I_{\fp}G'_{\fp}/G'_{\fp})$. This finishes the proof of the proposition.
\end{proof}

\subsection{Class field theory of wildly ramified extensions}

The following lemmas on class field theory of wildly ramified extensions are used in the proof of Theorem \ref{thm810}.

\begin{lemma}[Lemma 5.13 of \cite{Kuh02}] \label{lem513}
Let $L/K$ be a Galois extension of global fields, and let $K'/K$ be a Galois extension linearly disjoint from $L$. Let $\fp$ be a prime of $K$, and let $\fP$, $\fP'$, $\fp'$ be compatible prolongations of $\fp$ to $L$, $LK'$ and $K'$ respectively. If $K'/K$ is ramified of depth at most $t$ at $\fp$, then $LK'/L$ is ramified of depth at most $s=\psi_{L/K,\fp}(t)$ at $\fP$, where $\psi$ is given by the equation $G^s_{\fP}=G_{\psi_{\fP/\fp}(s),\fP}$ relating the upper and lower numberings of the ramification groups.
\begin{equation*}
\xymatrix{
& LK' \ar@{-}[dl]_{\textup{depth} \leq \psi_{L/K,\fp}(t)} \ar@{-}[dr] \\
L \ar@{-}[dr] & & K' \ar@{-}[dl]^{\textup{depth} \leq t} \\
& K
}
\end{equation*}
\end{lemma}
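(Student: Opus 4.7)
The plan is to work inside the full compositum $M = LK'$ and track ramification in $G = \Gal(M/K)$ at the fixed prime $\fP'$ of $M$. Set $N = \Gal(M/L)$ and $H = \Gal(M/K')$; the only property actually used is $N \cap H = \Gal(M/LK') = 1$. Three standard ingredients will be combined: (i) lower-numbering ramification groups pass to subgroups by intersection, giving $N_{i,\fP'} = N \cap G_{i,\fP'}$ for the subgroup $N \subseteq G$; (ii) the local Herbrand functions are transitive along the tower $M/L/K$, so $\psi_{M/K,\fp} = \psi_{M/L,\fP} \circ \psi_{L/K,\fp}$; (iii) Herbrand's theorem for the normal quotient $G \twoheadrightarrow G/H = \Gal(K'/K)$ gives $\Gal(K'/K)^u_{\fp'} = G^u_{\fP'} H / H$.

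With these, the computation will be short. Unwinding the definition of upper numbering for the extension $M/L$ and applying (i) and (ii),
\begin{equation*}
\Gal(M/L)^s_{\fP'} = N_{\psi_{M/L,\fP}(s),\fP'} = N \cap G_{\psi_{M/L,\fP}(s),\fP'} = N \cap G_{\psi_{M/K,\fp}(t),\fP'} = N \cap G^t_{\fP'},
\end{equation*}
where the third equality uses $s = \psi_{L/K,\fp}(t)$ together with transitivity, and the last is the definition of the upper numbering filtration on $G$ at $\fP'$. The hypothesis $\Gal(K'/K)^t_{\fp'} = 1$ combined with (iii) then yields $G^t_{\fP'} \subseteq H$, after which intersecting with $N$ and using $N \cap H = 1$ gives $\Gal(M/L)^s_{\fP'} = 1$. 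This is precisely the assertion that $LK'/L$ has depth at most $s$ at $\fP$.

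There is no conceptual obstacle; the only care required will be keeping the different Herbrand functions attached to the correct step of the tower. Lower numbering is well-behaved under passage to the subgroup $N \subseteq G$, while upper numbering is well-behaved under passage to the quotient $G \twoheadrightarrow G/H$, and the transitivity in (ii) is exactly the bridge that converts the upper parameter $t$ for $K'/K$ into the upper parameter $s$ for $M/L$ via the common lower parameter $\psi_{M/K,\fp}(t) = \psi_{M/L,\fP}(s)$ inside $G$. Notably, the global linear disjointness of $L$ and $K'$ over $K$ is not used in the argument; only the tautological identity $\Gal(M/LK') = 1$ is needed.
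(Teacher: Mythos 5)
Your proof is correct and is exactly what the paper's one-line ``tracing the definitions'' proof amounts to: pass from the upper-numbering condition on the quotient $\Gal(K'/K)$ to a lower-numbering condition on $G=\Gal(LK'/K)$ via $\psi_{M/K}$, restrict to the subgroup $N=\Gal(LK'/L)$ where lower numbering is preserved, and reinterpret via $\psi_{M/L}$ and the transitivity $\psi_{M/K}=\psi_{M/L}\circ\psi_{L/K}$ to land on the parameter $s=\psi_{L/K,\fp}(t)$. Your remark that linear disjointness is not actually used (only $\Gal(M/LK')=1$) is also accurate, and the bookkeeping of the three standard facts---subgroup compatibility of lower numbering, Herbrand's theorem for quotients, and transitivity of $\psi$---is handled correctly.
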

\begin{proof}
The proof is by tracing the definitions of the ramification groups.
\end{proof}

\begin{lemma}[Theorem 5.18 of \cite{Kuh02}] \label{lem518}
Let $L/K$ be an \textit{elementary abelian} $p$-extension of global fields of characteristic $p$ with Galois group $G$, and let $\fp$ be a prime of $K$. Then the upper ramification jumps at $\fp$ are prime to $p$.
\end{lemma}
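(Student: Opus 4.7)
The strategy is to localize at $\fp$ and reduce to a cyclic degree-$p$ Artin-Schreier quotient. First I would pass to the completion: the upper ramification filtration at $\fp$ depends only on $L_{\fP}/K_{\fp}$ for some prime $\fP$ above $\fp$, and the local Galois group $G_{\fp} = \Gal(L_{\fP}/K_{\fp})$, being a subgroup of $G$, remains elementary abelian.

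Suppose $s \geq 0$ is an upper ramification jump, so that $G_{\fp}^{s} \supsetneq G_{\fp}^{s+\varepsilon}$ for all sufficiently small $\varepsilon > 0$. Since $G_{\fp}^{s}/G_{\fp}^{s+\varepsilon}$ is a nontrivial elementary abelian $p$-group, I can choose a character $\chi: G_{\fp} \rightarrow \zpz$ with $\chi|_{G_{\fp}^{s+\varepsilon}} = 0$ but $\chi|_{G_{\fp}^{s}} \neq 0$. Let $H = \ker \chi$ and let $M$ be its fixed field, a cyclic extension of $K_{\fp}$ of degree $p$.

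By Herbrand's compatibility of the upper numbering with quotients (\cite[Chapter IV, \S 3]{Ser79}), $(G_{\fp}/H)^{u} = G_{\fp}^{u} H/H$ for all $u \geq 0$; the choice of $\chi$ forces $(G_{\fp}/H)^{s} = G_{\fp}/H$ and $(G_{\fp}/H)^{s+\varepsilon} = 1$, so $s$ is the unique upper jump of $M/K_{\fp}$. Since $M/K_{\fp}$ is a cyclic $p$-extension in characteristic $p$, it is an Artin-Schreier extension $K_{\fp}(y)$ with $y^{p}-y = a$; after the classical reduction $a \mapsto a - (b^{p}-b)$ one may assume the pole order $m = -v_{\fp}(a)$ is coprime to $p$, and standard local Artin-Schreier theory then identifies the unique upper jump with $m$. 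Hence $s = m$ is prime to $p$.

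The one point requiring care is verifying that Herbrand's formula, combined with the chosen $\chi$, really picks out $s$ as the unique upper jump of the cyclic quotient (rather than some smaller integer), and that the Artin-Schreier reduction terminates at a pole order coprime to $p$. Both are standard results in the local theory, but must be matched with the normalization of the upper numbering used throughout the paper (that of \cite{Ser79}).
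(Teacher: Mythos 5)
Your proof is correct, and it follows a genuinely different route from the paper's one-line sketch. The paper invokes the Hasse--Arf theorem (so that ``prime to $p$'' is well-posed, the jumps being integers) and then asserts ``induction on the number of jumps,'' leaving the mechanism implicit. You instead localize at $\fp$, and for each jump $s$ you choose a character $\chi : G_\fp \to \zpz$ that separates $G_\fp^s$ from $G_\fp^{s+}$; passing to the quotient $G_\fp/\ker\chi$, Herbrand's compatibility of the upper numbering with quotients (\cite[IV, \S 3, Prop.~14]{Ser79}) forces $s$ to be the unique jump of the resulting cyclic degree-$p$ subextension $M/K_\fp$, and Artin--Schreier theory identifies that jump with a pole order coprime to $p$. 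This avoids induction entirely: each jump is handled independently by projecting onto a rank-one quotient. A side benefit is that integrality of the jumps falls out of the Artin--Schreier computation, so you do not need Hasse--Arf as an external input for this special case. Two small points worth recording in a final write-up: (i) you tacitly restrict to $s \geq 0$; for a $p$-extension the only possible non-positive jump is at $-1$ (the unramified part), which is coprime to $p$ for trivial reasons, and there is never a jump at $0$ since tame inertia is trivial in a $p$-group; (ii) when you invoke Artin--Schreier you should note that for $s \geq 1$ the quotient $M/K_\fp$ is necessarily totally ramified of degree $p$, so the classical reduction applies and the unique lower break $m$ coincides with the upper break.
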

\begin{proof}
The statement is well-defined by the Hasse-Arf theorem (see \cite[p.76]{Ser79}). The lemma is then proved by induction on the number of jumps.
\end{proof}


\begin{lemma}[Corollary 5.23 of \cite{Kuh02}] \label{lem523}
Let $L/K$ be a $p$-extension of global function fields of characteristic $p$, with exact constant field $\mathbb{F}_{p^e}$. Let $\fP$ be a prime of $L$ ramified in $L/K$ and let $\fp=\fP\cap K$. Assume $L/K$ is ramified of depth at most $\nu_{\fp}$ at $\fp$, and let $I_{\fp}$ be the inertia group at $\fp$. If $L_{\fP}/K_{\fp}$ is abelian, then
\begin{equation*}
d_p(I_{\fp}) \leq e\cdot\deg\fp\cdot(\nu_{\fp}-1-[(\nu_{\fp}-1)/p]).
\end{equation*}
\end{lemma}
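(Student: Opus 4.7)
The plan is to apply local class field theory to the abelian local $p$-extension $L_{\fP}/K_{\fp}$, and then invoke Proposition \ref{prop627} to control the relevant $p$-rank of a quotient of higher unit groups.

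First I would recall the local reciprocity isomorphism
\[
K_{\fp}^{\ast}/N_{L_{\fP}/K_{\fp}}(L_{\fP}^{\ast}) \xrightarrow{\sim} \Gal(L_{\fP}/K_{\fp}),
\]
which is available because $L_{\fP}/K_{\fp}$ is assumed abelian. Under this isomorphism, the unit subgroup $U_{\fp} \subset K_{\fp}^{\ast}$ maps onto the inertia group $I_{\fp}$, and the key compatibility property is that the higher unit filtration $\{U_{\fp}^{(n)}\}$ matches up with the ramification filtration in upper numbering: the image of $U_{\fp}^{(n)}$ under the reciprocity map is the upper-numbered ramification group $G^{n}_{\fP}$ for every $n \geq 0$ (this is the classical statement from e.g. Serre, \emph{Local Fields}).

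Next, the assumption that $L/K$ has ramification of depth at most $\nu_{\fp}$ at $\fp$ means precisely $G^{\nu_{\fp}}_{\fP} = 1$, so $U_{\fp}^{(\nu_{\fp})}$ is contained in $N_{L_{\fP}/K_{\fp}}(L_{\fP}^{\ast})$. Consequently there is a surjection
\[
U_{\fp}/U_{\fp}^{(\nu_{\fp})} \twoheadrightarrow I_{\fp}.
\]
The top quotient of this filtration, $U_{\fp}/U_{\fp}^{(1)}$, is isomorphic to the multiplicative group $\mathbb{F}_{p^{e \deg\fp}}^{\ast}$, whose order is prime to $p$ and so contributes nothing to the $p$-rank. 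Therefore
\[
d_p(I_{\fp}) \leq d_p\!\left(U_{\fp}^{(1)}/U_{\fp}^{(\nu_{\fp})}\right).
\]

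The final step is to substitute the explicit formula from Proposition \ref{prop627}, which gives
\[
d_p\!\left(U_{\fp}^{(1)}/U_{\fp}^{(\nu_{\fp})}\right) = e\cdot \deg\fp \cdot \bigl(\nu_{\fp} - 1 - [(\nu_{\fp}-1)/p]\bigr),
\]
yielding the claimed inequality. The only nontrivial input is the first step: the precise matching of the upper-numbered ramification filtration with the higher-unit filtration under local reciprocity. Since this is a classical theorem of local class field theory for abelian extensions (and in particular uses the Hasse--Arf theorem implicitly to make the upper-numbering jumps well behaved, as noted in Lemma \ref{lem518}), I would simply cite it rather than reprove it. The hypothesis that $L_{\fP}/K_{\fp}$ be abelian is essential for this input and is the reason the lemma is stated under that assumption.
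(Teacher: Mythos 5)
Your proof is correct, but it takes a genuinely different route from the paper's. The paper argues entirely within ramification theory: it first reduces to the case where $I_{\fp}$ is elementary abelian (harmless for $p$-ranks), then invokes the Hasse--Arf theorem to make the upper-numbering jumps integers, Lemma \ref{lem518} to make them prime to $p$, and \cite[Prop.~IV.7]{Ser79} to bound the drop in $p$-rank at each jump by $e\cdot\deg\fp$; the bound follows by counting the integers in $\{1,\dots,\nu_{\fp}-1\}$ prime to $p$. You instead route everything through local class field theory: the reciprocity map carries the higher-unit filtration onto the upper-numbering ramification filtration, so the depth hypothesis $G^{\nu_{\fp}}_{\fP}=1$ puts $U_{\fp}^{(\nu_{\fp})}$ inside the norm group, exhibits $I_{\fp}$ as a quotient of $U_{\fp}/U_{\fp}^{(\nu_{\fp})}$, and (after discarding the prime-to-$p$ quotient $U_{\fp}/U_{\fp}^{(1)}$) reduces the bound to the formula of Proposition \ref{prop627}. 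Both arguments are sound and both ultimately rest on Hasse--Arf (yours implicitly, via the filtration-compatibility theorem). Your version is shorter and reuses Proposition \ref{prop627} rather than redoing the jump count, and it needs no reduction to the elementary abelian case; the paper's version is more self-contained in that it uses only the ramification-group machinery already developed (Lemma \ref{lem518} and \cite[Prop.~IV.7]{Ser79}) and does not require the full local reciprocity isomorphism. One small point worth making explicit in your write-up: the passage from the surjection $U_{\fp}/U_{\fp}^{(\nu_{\fp})}\twoheadrightarrow I_{\fp}$ to the inequality of $p$-ranks uses that the $p$-rank of a quotient of a finite abelian group is at most that of the group, and that the finite abelian group $U_{\fp}/U_{\fp}^{(\nu_{\fp})}$ splits as the product of its prime-to-$p$ part $U_{\fp}/U_{\fp}^{(1)}$ and its $p$-part $U_{\fp}^{(1)}/U_{\fp}^{(\nu_{\fp})}$; as stated this is fine but deserves a sentence.
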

\begin{proof}
Since we are only interested in $p$-ranks, it suffices to prove the lemma when $I_{\fp}$ is elementary abelian. By the Hasse-Arf theorem, all jumps in the upper filtration are integers. By Lemma \ref{lem518}, the jumps are prime to $p$, and by \cite[Prop. IV.7]{Ser79}, the $p$-rank is decreased by at most $e\cdot\deg{\fp}$ for each jump. The lemma now follows from a simple counting argument.
\end{proof}

\subsection{Proof of Theorem \ref{thm810}}

\begin{proof}
\textbf{(Step 1: Find proper solutions of local embedding problems.)} Recall that $G=\Gal(L/k)$. Define
\begin{equation*}
H^2_{nr}(G,\zpz)=\{ \epsilon\in H^2(G,\zpz) |~ res^G_{G_\fp}(\epsilon)\in H^2_{nr}(G_\fp,\zpz) \,\,\forall\fp\in S \}.
\end{equation*}
By Proposition \ref{prop81}, for each nonzero $\epsilon\in H^2_{nr}(G,\zpz)$ we can find a proper solution $M_{\epsilon}$ of the embedding problem $(L/K,\epsilon,S_L)$. Choose a basis $\epsilon_1,\ldots,\epsilon_n$ of $H^2_{nr}(G,\zpz)$ and let $N$ be the compositum of all the $M_{\epsilon_i}$.  Note that $N$ is Galois over $k$ since each $M_{\epsilon_i}$ is, and $\Gal(N/L)$ is central in $\Gal(N/k)$ because the $G$-action on $\zpz$ (which is the kernel of each embedding problem) is trivial. By Proposition \ref{prop318}, we have
\begin{equation}\label{dpNL}
d_p \Gal(N/L)=d_p H^2_{nr}(G,\zpz).
\end{equation}
To calculate the $p$-rank of $H^2_{nr}(G,\zpz)$, consider the restriction map
\begin{equation*}
\xymatrix@1{
H^2(G,\zpz) \ar@{->}[r]^-{\prod res} & \prod_{\fp\in S}\left(H^2_{nr}(G_{\fp},\zpz)\oplus H^2_{nr}(G_{\fp},\zpz)^{comp}\right),
}
\end{equation*}
where $H^2_{nr}(G_{\fp},\zpz)^{comp}$ is a complement (as a $\mathbb{F}_p$-vector space) of $H^2_{nr}(G_{\fp},\zpz)$ in $H^2(G_{\fp},\zpz)$. By Proposition \ref{prop633}, we have
\begin{equation*}
d_p H^2_{nr}(G_{\fp},\zpz)^{comp}=
\begin{cases}
d_p H^2_{nr}(G_{\fp},\zpz)-1 &, \fp\in S_{nr}, \\
d_p H^2_{nr}(G_{\fp},\zpz) &, \fp\notin S_{nr}.
\end{cases}
\end{equation*}
Therefore, we have
\begin{align*}
& d_p H^2_{nr}(G,\zpz) \\
\geq~ &d_p H^2(G,\zpz)-\sum_{\fp\in S\backslash S_{nr}} d_p H^2(G_{\fp},\zpz)-\sum_{\fp\in S_{nr}} \left(d_p H^2(G_{\fp},\zpz)-1\right) \\
=~ &r_p(G)-\sum_{\fp\in S\backslash S_{nr}} r_p(G_{\fp})-\sum_{\fp\in S_{nr}} (r_p(G_{\fp})-1).
\end{align*}
Combining this with \eqref{dpNL}, we have
\begin{equation}\label{dpstep1}
d_p \Gal(N/L) \geq r_p(G)-\sum_{\fp\in S\backslash S_{nr}} r_p(G_{\fp})-\sum_{\fp\in S_{nr}} (r_p(G_{\fp})-1).
\end{equation}

\textbf{(Step 2: Remove the ramification exceeding the ramification depth of $L/k$.)} Here, by ``removing'' the ramification of an extension $N/L$ exceeding certain ramification depth $\nu$, we mean to replace $N$ by a subextension $N_1$ of $N$ over $L$ so that the ramification depth of $N_1/L$ is bounded by $\nu$.

For the extension $L/k$ and $\fp$ a prime in $k$, let $\nu_{\fp}$ be the ramification depth at $\fp$. Let $S_L$ be the primes in $L$ lying above $S$, and $\nu_L$ be the lift of $\nu$ in $S_L$ using \eqref{eqnnuL}. If the ramification of $N$ obtained in Step $1$ over $L$ is still bounded by $\nu_L$, go to step 3.

If not, let $\fq$ be a prime in $L$ so that $N/L$ has ramification depth at $\fq$ exceeding $\nu_{L,\fq}$. For any prime $\fp$ in $S$, let $L_{\fp}^{nr}$ be the maximal unramified subextension of $L_{\fp}/k_{\fp}$. Since the extension $N/L$ obtained in Step $1$ comes from $H^2_{nr}(G,\zpz)$, the ramification index of any place $\fp\in S$ over $L$ is at most $p$. Hence, we have $N_{\fp}/L_{\fp}=N_{\fp}'L_{\fp}/L_{\fp}$ for some $N_{\fp}'$ elementary abelian over $L_{\fp}^{nr}$. Let $n_{\fp}$ be the ramification depth of $N_{\fp}'/L_{\fp}^{nr}$. Then $n_{\fp}-1$ is the highest ramification break of the extension. Since $L_{\fp}^{nr}/k_{\fp}$ is unramified, $n_{\fp}-1$ is also the highest ramification break of $N_{\fp}'/k_{\fp}$.

Let $M_1$ be the compositum of $N$ with all elementary abelian $p$-extensions of $K$ ramified at the same primes as $N/L$ of depth at most $n_{\fp}$ for all $\fp\neq\fq$ and of exact depth $n_{\fq}$ at $\fq$. By Lemma \ref{lem513}, this does not change the ramification depth of any local extensions. In particular, we have $M_{1,\fp}/L_{\fp}=M'_{1,\fp}L_{\fp}/L_{\fp}$, with $M'_{1,\fp}/L_{\fp}^{nr}$ elementary abelian. The tower of the local fields are as shown in Figure \ref{figstep2}.

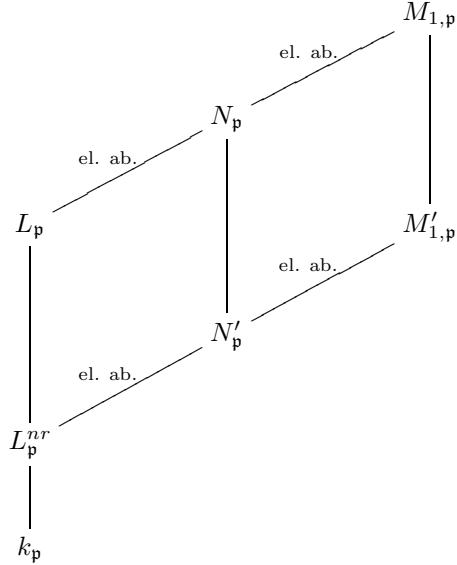
\begin{figure}[H]
\begin{equation*}
\xymatrix{
&&&& M_{1,\fp} \ar@{-}[dll]_{\text{el. ab.}} \ar@{-}[dd] \\
&& N_{\fp} \ar@{-}[dll]_{\text{el. ab.}} \ar@{-}[dd] \\
L_{\fp} \ar@{-}[dd] &&&& M'_{1,\fp} \ar@{-}[dll]_{\text{el. ab.}} \\
&& N'_{\fp} \ar@{-}[dll]_{\text{el. ab.}} \\
L_{\fp}^{nr} \ar@{-}[d] \\
k_{\fp}
}
\end{equation*}
\caption{The tower of fields in Step 2. \label{figstep2}}
\end{figure}

Since $H=\Gal(M_{1,\fp}'/L^{nr}_{\fp})$ is abelian, the upper ramification group $H^{n_{\fq}-1}$ at $\fq$ has $p$-rank at most $e\cdot f_{\fq}$ by \cite[Prop. IV.7]{Ser79} (here $e$ is defined as in \ref{thm626}, and $f_{\fq}$ is the degree of $\fq$). There exists a subgroup $H'$ of $G_1=\Gal(M_1/L)$ which restricts isomorphically to $H^{n_{\fq}-1}$. Since $G_1$ is central in $\Gal(M_1/k)$, the subgroup $H'$ is normal in $\Gal(M_1/k)$. By Theorem \ref{thm626}, the fixed field under $H'$ is an extension $M_2$ of $L$ with $p$-rank
\begin{align}
&d_p(\Gal(M_2/L) \nonumber \\
=~&d_p(\Gal(N/L))+d_p G_{(S,\{n_{\fp}\}_{\fp})}-d_p G_{(S,\{\{n_{\fp}\}_{\fp\neq\fq},n_{\fq}-1\})}-d_p(H') \nonumber\\
\geq~&d_p(\Gal(N/L))+1+\Delta_{(S,\{n_{\fp}\}_{\fp})}+e\sum_{\fp\in S} f_{\fp}(n_{\fp}-1-[(n_{\fp}-1)/p]) \nonumber\\
&\qquad -1-\Delta_{(S,\{\{n_{\fp}\}_{\fp\neq\fq},n_{\fq}-1\})}-e\sum_{\fp\neq\fq} f_{\fp}(n_{\fp}-1-[(n_{\fp}-1)/p]) \nonumber\\
&\qquad\qquad -e\cdot f_{\fq}((n_{\fq}-1)+1+[(n_{\fq}-2)/p])-e\cdot f_{\fq} \label{dpstep2a}\\
=~&d_p(\Gal(N/L))+\Delta_{(S,\{n_{\fp}\}_{\fp})}-\Delta_{(S,\{\{n_{\fp}\}_{\fp\neq\fq},n_{\fq}-1\})} \nonumber\\
&\qquad +[(n_{\fq}-2)/p]-[(n_{\fq}-1)/p] \nonumber\\
=~&d_p(\Gal(N/L))+\Delta_{(S,\{n_{\fp}\}_{\fp})}-\Delta_{(S,\{\{n_{\fp}\}_{\fp\neq\fq},n_{\fq}-1\})}. \nonumber
\end{align}
The reason for the last step is as follows. As $N_{\fp}'/L_{\fp}$ is elementary abelian, Lemma \ref{lem518} shows that $p\nmid (n_{\fp}-1)$. Hence $[(n_{\fq}-1)/p]=[(n_{\fq}-2)/p]$.

By our construction, $M_2$ has a lower ramification depth than $N$ at $\fq$, and the ramification depths at other $\fp\neq\fq$ in $M_2$ is at most that of $N$. Repeat the process from the beginning of step 2 with $M_2$ in place of $N$, until all ramification of $N/L$ exceeding the depth $\nu$ has been removed. Call the resulting extension $N_1$.

Using the filtration
\begin{equation*}
\Delta_S\subseteq\cdots\subseteq\Delta_{S,\{n_{\fp}\}_{\fp}}\subseteq\Delta_{(S,\{\{n_{\fp}\}_{\fp\neq\fq},n_{\fq}-1\})}\cdots\subseteq\Delta_{S_k,\nu},
\end{equation*}
we get
\begin{equation}\label{dpstep2b}
d_p(\Gal(N_1/L))\geq d_p(\Gal(N/L))-(d_p(\Delta_{S_k,\nu})-d_p(\Delta_{S})).
\end{equation}
Finally, take the compositum $M$ of $N_1$ with extensions of $K$ ramified of depth bounded by $\nu$ and not already contained in $L/k$. The depth of the ramification in $M$ is the same as that of $N_1$, and
\begin{equation}\label{dpstep2c}
d_p(\Gal(M/L))\geq d_p(\Gal(N_1/L))+d_p(G_{S,\nu})-d_p(G).
\end{equation}

Combining \eqref{dpstep2a}, \eqref{dpstep2b}, \eqref{dpstep2c} above and \eqref{dpstep1} in step $1$, we get
\begin{multline}\label{dpstep2}
d_p(\Gal(M/L))\geq r_p(G)-d_p(G)-\sum_{\fp\in S\backslash S_{nr}}r_p(G_{\fp})-\sum_{\fp\in S_{nr}}(r_p(G_{\fp})-1) \\
+d_p(G_{S,\nu})+d_p\Delta_{S}-d_p\Delta_{S,\nu}.
\end{multline}
Note that by our construction, the extension $M/L$ is central over $L/k$. \\

\textbf{(Step 3: Remove the remaining ramification above $L$.)} Let $L_{\fp}^{ab}$ be the maximal abelian subextension of $L_{\fp}/k_{\fp}$, and let $G_{\fp}^{ab}$ be its Galois group. Let $I_{\fp}$ and $I_{\fp}^{ab}$ be the inertia group at $\fp$ of $L_{\fp}/k_{\fp}$ and $L_{\fp}^{ab}/k_{\fp}$ respectively. Let $I_{\fp}^{(p)}=I_{\fp}^{ab}/(I_{\fp}^{ab})^p$ be the maximal elementary abelian quotient of $I_{\fp}^{ab}$. Since $M/L$ is central over $L/k$, we have $M_{\fp}/L_{\fp}=M'_{\fp}L_{\fp}/L_{\fp}$ with $M_{\fp}'/k_{\fp}$ abelian. Let $L_{\fp}^{nr}$ be the maximal unramified subextension of $L_{\fp}^{ab}/k_{\fp}$, and let $L_{\fp}^{(p)}$ be the extension of $L_{\fp}^{nr}$ corresponding to $I_{\fp}^{(p)}$. Then $M'_{\fp}L_{\fp}^{(p)}/L_{\fp}^{nr}$ is elementary abelian. Since this is Galois, we can take the fixed field $M_{\fp}^{'r}$ under a complement of the inertia group of $M'_{\fp}L_{\fp}^{(p)}/L_{\fp}^{nr}$. The tower of fields is shown in Figure~\ref{figstep3}.

\begin{figure}[H]
\begin{equation*}
\xymatrix{
L_{\fp}^{ab} \ar@{-}[d] && M'_{\fp}L_{\fp}^{(p)} \ar@{-}[dl]_{\text{unr.}} \ar@{-}[dd] \\
L_{\fp}^{(p)} \ar@{-}[d]_{\text{el. ab.}} \ar@{-}[r]^{\subseteq} & M_{\fp}^{'r} \ar@{-}[dl] \\
L_{\fp}^{nr} \ar@{-}[d]_{\text{unr.}} && M_{\fp}' \ar@{-}[dll]^{\text{ab.}} \\
k_{\fp}
}
\end{equation*}
\caption{The tower of fields in Step 3. \label{figstep3}}
\end{figure}

The extension $M_{\fp}^{'r}/L_{\fp}^{nr}$ is totally ramified and elementary abelian. By Lemma \ref{lem523}, we have
\begin{equation} \label{dpstep3a}
d_p(\Gal(M_{\fp}^{'r}/L_{\fp}^{nr})) \leq e\cdot\deg{\fp}\cdot(\nu_{\fp}-1-[(\nu_{\fp}-1)/p]).
\end{equation}

Next, let $N'$ be the field extension of $L$ which remains after removing the ramification above $K$ at all $\fp$ (by taking the fixed fields of the preimages of $\Gal(M^{'r}_{\fp}/L_{\fp}^{(p)})$ for all $p$). We can estimate the drop in global $p$-rank using the formula
\begin{equation} \label{dpstep3b}
d_p(\Gal(M_{\fp}^{'r}/L_{\fp}{(p)}))=d_p(\Gal(M_{\fp}^{'r}/L_{\fp}^{nr}))-d_p(I_{\fp}^{(p)}),
\end{equation}
and
\begin{align} \label{dpstep3c}
d_p(I_{\fp}^{(p)})=d_p(I_{\fp}^{ab}/(I_{\fp}^{ab})^p) &= d_p(I_{\fp}^{ab}) \nonumber \\
&= \begin{cases}
d_p(G_{\fp})-1 &, \fp\in S_{nr}, \\
d_p(G_{\fp}) &, \fp\in S\backslash S_{nr}.
\end{cases}
\end{align}
The last formula is by Remark \ref{rem632}. We have
\begin{align*}
&d_p(\Gal(N'/L)) \\  \geq~ &d_p(\Gal(M/L))-\sum_{\fp\in S} d_p(\Gal(M^{'r}_{\fp}/L_{\fp}^{(p)})) \\
=~ &d_p(\Gal(M/L))-\sum_{\fp\in S} d_p(\Gal(M_{\fp}^{'r}/L_{\fp}^{nr}))-d_p(I_{\fp}^{(p)}) \text{\quad (by \eqref{dpstep3b})} \\
\geq~&d_p(\Gal(M/L))-e\cdot\sum_{\fp\in S} \deg{\fp}\cdot(\nu_{\fp}-1-[(\nu_{\fp}-1)/p]) \\
&\qquad +\sum_{\fp\in S}d_p(I_{\fp}^{(p)}) \text{\quad (by \eqref{dpstep3a})} \\
=~&d_p(\Gal(M/L))-(d_p(G_{S,\nu})-1-d_p(\Delta_{S,\nu}))+\sum_{\fp\in S}d_p(I_{\fp}^{(p)}) \\
\geq~&r_p(G)-d_p(G)-\sum_{\fp\in S\backslash S_{nr}}r_p(G_{\fp})-\sum_{\fp\in S_{nr}}(r_p(G_{\fp})-1) \\
& \qquad +d_p\Delta_{S}+1+\sum_{\fp\in S}d_p(I_{\fp}^{(p)}) \text{\quad (by \eqref{dpstep2})} \\
=~&r_p(G)-d_p(G)-\sum_{\fp\in S\backslash S_{nr}}(r_p(G_{\fp})-d_p(G_{\fp})) \\
& \qquad -\sum_{\fp\in S_{nr}}(r_p(G_{\fp})-d_p(G_{\fp}))+d_p\Delta_{S}+1 \text{\quad (by \eqref{dpstep3c})}\\
=~&r_p(G)-d_p(G)-\sum_{\fp\in S}(r_p(G_{\fp})-d_p(G_{\fp}))+d_p\Delta_{S}+1.
\end{align*}

Finally, to ensure that the primes in $T_k$ split completely, we replace $N'$ by the fixed field $\tilde{L}$ of the Frobenius of the primes in $T_k$. We have $d_p(\Gal(\tilde{L}/L)\geq d_p(\Gal(N'/L))-\abs{T_k}$. The theorem follows.
\end{proof}

\section{New lower bounds for the Ihara constants $A(2)$ and $A(3)$} \label{secA23}

\subsection{Preliminaries}

Before building our towers and proving Theorem \ref{thmtower}, we need some preliminary results on class field theory. The first result we need is an estimation of $r_p(G_{\fp})-d_p(G_{\fp})$.

\begin{prop} \label{propr-d}
Let $L/K$ be an abelian $p$-extension of global function fields over $\mathbb{F}_q$ of characteristic $p$, and let $\fp$ be a prime of degree $f_{\fp}$ in $K$. Assume that $L/K$ is ramified of depth at most $\nu_{\fp}$. Then
\begin{equation*}
r_p(G_{\fp})-d_p(G_{\fp}) \leq \binom{ef_{\fp}(\nu_{\fp}-1)+1}{2} = \frac{(ef_{\fp}(\nu_{\fp}-1))(ef_{\fp}(\nu_{\fp}-1)+1)}{2}.
\end{equation*}
Here $e$ is defined by $q=p^e$.
\end{prop}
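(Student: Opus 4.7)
The plan is to reduce the problem to a cohomological computation for finite abelian $p$-groups combined with local class field theory applied to the completion $K_\fp$. Since $L/K$ is an abelian $p$-extension, the decomposition group $G_\fp = \Gal(L_\fP/K_\fp)$ is a finite abelian $p$-group. For any such group $A$ with generator rank $d_p(A) = d$, write $A \cong \bigoplus_{i=1}^d \mathbb{Z}/p^{a_i}\mathbb{Z}$; the K\"unneth formula applied to $H^*(-,\mathbb{F}_p)$, together with the standard fact $\dim_{\mathbb{F}_p} H^2(\mathbb{Z}/p^a\mathbb{Z},\mathbb{F}_p) = 1$ for every $a \geq 1$, yields $\dim_{\mathbb{F}_p} H^2(A,\mathbb{F}_p) = d + \binom{d}{2}$ (one class from each of the $d$ factors and one mixed class from each of the $\binom{d}{2}$ unordered pairs of factors). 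Hence $r_p(A) - d_p(A) = \binom{d}{2}$ for every finite abelian $p$-group, and the proposition reduces to the bound $d_p(G_\fp) \leq ef_\fp(\nu_\fp - 1) + 1$.

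For this bound, the hypothesis that $L/K$ is ramified of depth at most $\nu_\fp$ at $\fp$ means $G_\fP^{\nu_\fp} = 1$ in upper numbering, and by the standard compatibility of local class field theory with the upper ramification filtration this forces $U_\fp^{(\nu_\fp)}$ into the local norm group $N_{L_\fP/K_\fp}(L_\fP^*)$. The reciprocity map then descends to a surjection $K_\fp^*/U_\fp^{(\nu_\fp)} \twoheadrightarrow G_\fp$. Using the standard decomposition $K_\fp^* \cong \pi^{\mathbb{Z}} \times \mathbb{F}_{q^{f_\fp}}^* \times U_\fp^{(1)}$ via a uniformizer and the Teichm\"uller lift, and noting that $\mathbb{F}_{q^{f_\fp}}^*$ has order $q^{f_\fp}-1$ prime to $p$, the maximal pro-$p$ quotient of $K_\fp^*/U_\fp^{(\nu_\fp)}$ is $\mathbb{Z}_p \times U_\fp^{(1)}/U_\fp^{(\nu_\fp)}$. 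Since $G_\fp$ is a finite $p$-group it is a quotient of this, so $d_p(G_\fp) \leq 1 + d_p(U_\fp^{(1)}/U_\fp^{(\nu_\fp)})$. The filtration $U_\fp^{(n)}/U_\fp^{(n+1)} \cong \mathbb{F}_{q^{f_\fp}}$ for $n \geq 1$ gives $\abs{U_\fp^{(1)}/U_\fp^{(\nu_\fp)}} = q^{f_\fp(\nu_\fp - 1)} = p^{ef_\fp(\nu_\fp - 1)}$, so the $p$-rank of this group is at most $N := ef_\fp(\nu_\fp - 1)$, whence $d_p(G_\fp) \leq N + 1$.

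Combining the two paragraphs gives
\[
r_p(G_\fp) - d_p(G_\fp) = \binom{d_p(G_\fp)}{2} \;\leq\; \binom{N+1}{2},
\]
which is the asserted inequality. The only step that is not pure bookkeeping is the passage from the vanishing of $G_\fP^{\nu_\fp}$ in upper numbering to the conductor bound $U_\fp^{(\nu_\fp)} \subseteq N_{L_\fP/K_\fp}(L_\fP^*)$, but this is a classical consequence of local class field theory in the abelian case. One could tighten the argument by substituting the sharper equality $d_p(U_\fp^{(1)}/U_\fp^{(\nu_\fp)}) = ef_\fp(\nu_\fp - 1 - [(\nu_\fp - 1)/p])$ from Proposition \ref{prop627}, but the crude order-based estimate already suffices for the stated bound.
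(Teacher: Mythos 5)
Your proof is correct, but it follows a genuinely different route from the paper's. The paper passes through the inertia subgroup: from the exact sequence $1 \to I_{\fp} \to G_{\fp} \to G_{\fp}/I_{\fp} \to 1$ (with cyclic quotient) it deduces $r_p(G_{\fp}) - d_p(G_{\fp}) \leq r_p(I_{\fp})$, then invokes \cite[Prop.\ IV.7]{Ser79} to get $\lvert I_{\fp} \rvert \leq p^{e f_{\fp}(\nu_{\fp}-1)}$ and hence $r_p(I_{\fp}) \leq \binom{e f_{\fp}(\nu_{\fp}-1)+1}{2}$. You instead compute the quantity of interest exactly --- $r_p(A) - d_p(A) = \binom{d_p(A)}{2}$ for any finite abelian $p$-group $A$, via K\"unneth --- and then bound the generator rank of the \emph{whole} decomposition group by local class field theory: the depth hypothesis forces $U_{\fp}^{(\nu_{\fp})}$ into the local norm group, so $G_{\fp}$ is a $p$-quotient of $K_{\fp}^*/U_{\fp}^{(\nu_{\fp})}$ and hence $d_p(G_{\fp}) \leq 1 + e f_{\fp}(\nu_{\fp}-1)$. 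The two arguments are about equally short and give the same constant; yours is a bit more self-contained (it avoids the citation to Serre's ramification-group estimate and replaces the crude bound $r_p(G_\fp)-d_p(G_\fp)\le r_p(I_\fp)$ by an identity), while the paper's stays entirely within the inertia/ramification language already set up in Section 4 and reuses \cite[Prop.\ IV.7]{Ser79}, which appears again in Step~2 of the proof of Theorem~\ref{thm810}. One cosmetic note: the uniformizer contributes a copy of $\mathbb{Z}$ (or $\widehat{\mathbb{Z}}$ after completion), not $\mathbb{Z}_p$, to $K_{\fp}^*/U_{\fp}^{(\nu_{\fp})}$, but this does not affect the $p$-rank count.
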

\begin{proof}
Using the exact sequence
\begin{equation*}
\xymatrix@1{
1 \ar@{->}[r] & I_{\fp} \ar@{->}[r] & G_{\fp} \ar@{->}[r] & \mathbb{Z}/p^{f_{\fp}}\mathbb{Z} \ar@{->}[r] & 1,
}
\end{equation*}
we easily get $r_p(G_{\fp})-d_p(G_{\fp}) \leq r_p(I_{\fp})$. By \cite[Prop. IV.7]{Ser79}, $I_{\fp}$ is a $p$-group of order at most $p^{e\cdot f_{\fp}(\nu_{\fp}-1)}$. Hence
\begin{equation*}
r_p(I_{\fp}) \leq \binom{ef_{\fp}(\nu_{\fp}-1)+1}{2}.
\end{equation*}
This completes the proof.
\end{proof}

Next, we turn our attention to ray class fields. Let $K$ be a global field, $T$ be a finite set of primes in $K$ and $\fm=\sum_{\fp}m_{\fp}\fp$ be a ray modulus with support disjoint from $T$. Let $K_{\fm}^T$ be the $T$-ray class field of conductor $\fm$ in $K$, i.e. the maximal subfield of the ray class field $K_{\fm}$ of conductor $\fm$ such that all primes in $T$ split completely.
To compute the genus of the ray class fields, we will use the ``F\"{u}hrerproduktdiskriminantformel'' (see \cite[Chapter 5]{Wei48}).
\begin{prop}[F\"{u}hrerproduktdiskriminantformel] \label{propcdf}
Let $K/F$ be a geometric extension of function fields over $\mathbb{F}_q$ with an abelian Galois group $G$, then
\begin{equation*}
2g(K)-2=[K:F](2g(F)-2)+\sum_{\chi}\deg\ff_\chi.
\end{equation*}
Here $\ff_\chi$ is the Artin-conductor of $\chi$, and the sum runs through the characters $\chi$ of $G$.
\end{prop}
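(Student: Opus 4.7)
The plan is to combine the Hurwitz genus formula with the classical conductor-discriminant identity of Artin-Hasse. First I would apply Hurwitz to the geometric extension $K/F$, obtaining
\[ 2g(K)-2 = [K:F](2g(F)-2) + \deg \mathfrak{D}_{K/F}, \]
where $\mathfrak{D}_{K/F}$ denotes the different of $K/F$. Because the extension is geometric, the constant field $\mathbb{F}_q$ does not grow, so no correction to the formula is needed and degrees of divisors are computed over $\mathbb{F}_q$ on both sides.

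Next I would translate the degree of the different into the degree of the discriminant. For any finite separable extension of function fields one has the relation $\mathfrak{d}_{K/F} = N_{K/F}(\mathfrak{D}_{K/F})$, and the norm preserves divisor degrees: for a place $\fP$ of $K$ lying above $\fp$ of $F$, the identity $\deg_F N_{K/F}(\fP) = f(\fP|\fp)\,\deg_F \fp = \deg_K \fP$ holds place by place. Summing over places therefore gives $\deg \mathfrak{D}_{K/F} = \deg \mathfrak{d}_{K/F}$.

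The substantive input is the conductor-discriminant formula for abelian extensions (see \cite[Ch. 5]{Wei48}):
\[ \mathfrak{d}_{K/F} = \prod_{\chi} \ff_\chi \]
as divisors on $F$, the product ranging over the characters $\chi$ of $G = \Gal(K/F)$. This is proved by decomposing the regular representation of $G$ into its one-dimensional constituents, invoking additivity of the Artin conductor on direct sums, and identifying $\mathfrak{d}_{K/F}$ with the Artin conductor of the regular representation. Passing to degrees yields $\deg \mathfrak{d}_{K/F} = \sum_\chi \deg \ff_\chi$, and substituting this back into Hurwitz produces the stated identity.

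The only non-routine ingredient is the conductor-discriminant formula itself, which is classical and is cited in the source; the rest is bookkeeping of divisor degrees. I expect the only real subtlety to be confirming that the normalization of the Artin conductor (a divisor on $F$ with degree measured over $\mathbb{F}_q$) is consistent with the version of Hurwitz being used, but this is automatic since $K/F$ is both geometric and abelian, so no constant field or noncommutative effects intervene.
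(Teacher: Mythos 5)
The paper does not supply its own proof of this proposition: it cites Chapter 5 of Weil's \emph{Sur les courbes alg\'ebriques} and takes the statement as known. Your argument is the standard derivation and it is correct. You apply the Hurwitz genus formula to the geometric extension $K/F$ to express $2g(K)-2$ in terms of the degree of the different $\mathfrak{D}_{K/F}$, pass from different to discriminant via $\mathfrak{d}_{K/F}=N_{K/F}(\mathfrak{D}_{K/F})$ together with the place-by-place identity $\deg_F N_{K/F}(\fP)=f(\fP|\fp)\deg_F\fp=\deg_K\fP$ (valid precisely because the extension is geometric, so both degrees are measured over the same $\mathbb{F}_q$), and then invoke the Artin conductor-discriminant relation $\mathfrak{d}_{K/F}=\prod_\chi\ff_\chi$, which for abelian $G$ follows from decomposing the regular representation into one-dimensional characters and using additivity of the Artin conductor. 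This is essentially the content that the cited source establishes; you have simply reassembled it, and the bookkeeping checks out. One cosmetic remark: the genus identity in the statement is really a corollary of the conductor-discriminant formula rather than the formula itself, a distinction your proof makes cleaner than the paper's naming does.
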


The next proposition gives a lower bound for the $p$-rank of the ray class group $\Gal(k_{\fm}^T / k)$.
\begin{prop} \label{propdpK}
The $p$-rank of the extension $k_{\fm}^T / k$ is at least
\[
d_p(\Gal(k_{\fm}^T / k)) \geq 1+\sum_{\fp\in S} ef_{\fp} \cdot (\nu_{\fp}-1-[(\nu_{\fp}-1)/p])-t.
\]
\end{prop}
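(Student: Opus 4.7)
The plan is to realize $d_p(\Gal(k_\fm^T/k))$ as the generator rank of a natural quotient of $G_{S,\nu}$ and then invoke Theorem~\ref{thm626}.

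First, since the $p$-rank of a finite abelian group equals that of its $p$-Sylow subgroup, I would replace $k_\fm^T$ by its maximal $p$-subextension $M/k$. The field $M$ is then an abelian $p$-extension of $k$ which is unramified outside $S$, has ramification depth bounded by $\nu$, and in which every prime of $T$ splits completely; conversely, every such extension sits inside $k_\fm^T$. Hence $M$ is the maximal abelian quotient of the Galois group $G_{S,\nu}^T$ of the maximal pro-$p$-extension of $k$ unramified outside $S$, of ramification depth bounded by $\nu$, in which $T$ splits completely. This gives
\[
d_p(\Gal(k_\fm^T/k)) \;=\; d_p((G_{S,\nu}^T)^{ab}) \;=\; d_p(G_{S,\nu}^T).
\]

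Next, $G_{S,\nu}^T$ is obtained from $G_{S,\nu}$ by demanding that the Frobenius at each $\fp \in T$ act trivially. Since $T \cap S = \varnothing$, every $\fp \in T$ is unramified in $k_{S,\nu}/k$, so its decomposition subgroup in $G_{S,\nu}$ is procyclic, topologically generated by a single Frobenius element. Thus $G_{S,\nu}^T$ is the quotient of $G_{S,\nu}$ by the normal closure of $t$ elements, and the standard estimate for the generator rank of such a quotient yields
\[
d_p(G_{S,\nu}^T) \;\geq\; d_p(G_{S,\nu}) - t.
\]

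Finally, Theorem~\ref{thm626} together with the trivial lower bound $d_p(\Delta_{S,\nu}) \geq 0$ gives
\[
d_p(G_{S,\nu}) \;=\; 1 + d_p(\Delta_{S,\nu}) + \sum_{\fp\in S} ef_\fp\bigl(\nu_\fp - 1 - [(\nu_\fp-1)/p]\bigr) \;\geq\; 1 + \sum_{\fp\in S} ef_\fp\bigl(\nu_\fp - 1 - [(\nu_\fp-1)/p]\bigr),
\]
and combining the three displays finishes the proof.

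The only subtle point is the identification performed in the first step: the ``$+1$'' in $d_p(G_{S,\nu})$ comes from the constant field $p$-extension of $k$, which is typically \emph{absent} from $k_\fm^T$ (no rational place of $k$ splits in a nontrivial constant field extension). I do not expect this to cause a real obstacle, because the coarse inequality $d_p(G_{S,\nu}^T) \geq d_p(G_{S,\nu}) - t$ already absorbs the loss: the Frobenius of any rational $\fp \in T$ generates the $\mathbb{Z}_p$-quotient coming from constants, so this ``$+1$'' is silently paid for by one of the $t$ splitting conditions.
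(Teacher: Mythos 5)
Your argument is correct, and it lands on exactly the same inequality, but it takes a slightly different route from the paper's one-line proof. The paper argues directly on the ray class group: for a singleton $T$, the group $\Gal(k_\fm^T/k)$ contains the factor $\prod_{\fp\in S}U_\fp^{(1)}/U_\fp^{(\nu_\fp)}$, whose $p$-rank is computed by Proposition~\ref{prop627}; enlarging $T$ to size $t$ costs at most $t-1$ more generators, which is where the paper's $t-1$ and your $1+\cdots-t$ reconcile. You instead pass through the (a priori non-abelian) group $G_{S,\nu}$ and its $T$-split quotient $G_{S,\nu}^T$, identify the maximal $p$-subextension of $k_\fm^T$ with $(k_{S,\nu}^T)^{\mathrm{ab}}$, and then invoke Theorem~\ref{thm626} together with the crude bound $d_p(G_{S,\nu}^T)\geq d_p(G_{S,\nu})-t$. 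This is a valid detour: since each $\fp\in T$ is disjoint from $S$ and hence unramified in $k_{S,\nu}/k$, the decomposition groups above it are procyclic and conjugate, so $G_{S,\nu}^T$ really is the quotient of $G_{S,\nu}$ by the normal closure of $t$ Frobenius elements, and dropping the nonnegative term $d_p(\Delta_{S,\nu})$ does no harm for a lower bound. The worry you raise about the constant-field $+1$ is a genuine observation but, as you say, it is harmless because the bookkeeping is already coarse enough to absorb it; the paper sidesteps this by never forming $G_{S,\nu}$ at all and instead anchoring the count at $|T|=1$. In short, your proof is correct and spelled out in more detail; the paper's is shorter because it works directly with the ray class group and Proposition~\ref{prop627} rather than routing through Theorem~\ref{thm626}, but both rest on the same local computation.
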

\begin{proof}
For a singleton $T$, the group contains the factor $\prod_{\fp \in S} U_{\fp}^{(1)} / U_{\fp}^{(\nu_{\fp})}$. Now use Proposition \ref{prop627}
and subtract $t-1$ for a set $T$ of size $t$.
\end{proof}

\subsection{Construction of the tower and the proof of Theorem \ref{thmtower}}

Let $k$ be a function field over $\mathbb{F}_q$, called the \textit{base field}. Let $g$ and $N$ be the genus and the number of rational places of $k$ respectively. Let $S$ be a set of places which we allow to ramify, and let $T$ be a set of degree one places disjoint from $S$, of cardinalities $s=\abs{S}$ and $t=\abs{T}$. Clearly we have $t\leq N$. Let $\fm=\sum_{\fp\in S}\nu_{\fp}\fp$, where $\nu_{\fp}\in\mathbb{N}$ for each $\fp$. For any prime $\fp$ of $k$, denote by $f_{\fp}$ its degree. Let $K'$ be the ray class field of conductor $\fm$, and let $K=k_{\fm}^T$ be the maximal subfield of $K'$ such that all places in $T$ splits completely.

Let $T_K$ be the primes above $T$ in $K$. Now we build the $(T_K,p)$-class field tower on top of $K$, and let $L$ be the union of the tower. Let $G=\Gal(L/k)$. The extension of fields is shown in the following diagram.

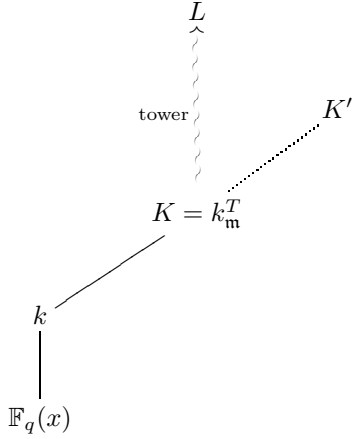
\begin{figure}[H]
\begin{equation*}
\xymatrix{
& L \ar@{<~}[dd]_{\text{tower}} \\
&& K' \ar@{.}[dl] \\
& K=k_{\fm}^T \ar@{-}[dl] \\
k \ar@{-}[d] \\
\mathbb{F}_q(x)
}
\end{equation*}
\caption{The tower construction \label{figtower}}
\end{figure}

We determine the conditions to be met in order for the tower to be infinite. Suppose that $G$ is finite, nontrivial and let $d=d_p(G)$, $r=r_p(G)$. From Proposition \ref{propdpK},
we have
\begin{equation}\label{eqndp}
d \geq 1+\sum_{\fp\in S}ef_{\fp}(\nu_{\fp}-1-[(\nu_{\fp}-1)/p])-t.
\end{equation}
From Theorem \ref{thm810} and Proposition \ref{propr-d}, we have
\begin{equation}\label{eqnr-d}
r-d \leq \sum_{\fp\in S}\frac{(ef_{\fp}(\nu_{\fp}-1))(ef_{\fp}(\nu_{\fp}-1)+1)}{2} +(t-1).
\end{equation}
Now $G$ is infinite if \eqref{eqndp} and \eqref{eqnr-d} together yield a contradiction
in the Golod-Shafarevich inequality
\begin{equation*}
r-d > \frac{d^2}{4}-d. 
\end{equation*}
This happens when
\begin{multline*}
\frac{(1+\sum_{\fp\in S}ef_{\fp}(\nu_{\fp}-1-[(\nu_{\fp}-1)/p])-t)^2}{4}-(1+ \sum_{\fp\in S}ef_{\fp}(\nu_{\fp}-1-[(\nu_{\fp}-1)/p])-t) \\
\geq \frac{(\sum_{\fp\in S}ef_{\fp}(\nu_{\fp}-1))(ef_{\fp}(\nu_{\fp}-1)+1)}{2} +(t-1),
\end{multline*}
which simplifies to
\begin{multline*}
(1+\sum_{\fp\in S}ef_{\fp}(\nu_{\fp}-1-[(\nu_{\fp}-1)/p])-t)^2 \\
- 2\sum_{\fp\in S}ef_{\fp}(\nu_{\fp}-1)(ef_{\fp}(\nu_{\fp}-1)+1) - 4\sum_{\fp\in S}ef_{\fp}(\nu_{\fp}-1-[(\nu_{\fp}-1)/p]) \geq 0.
\end{multline*}
This is the condition \eqref{ineqr-d} in Theorem \ref{thmtower}.

Now suppose our tower is infinite (by a suitable choice of $S$, $T$ and $\nu=(\nu_{\fp} : \fp \in S)$ so that \eqref{ineqr-d} is satisfied), then we can calculate the lower bound of $A(q)$ given by this tower as follows. By Proposition \ref{propcdf}, we have
\begin{equation*}
2g(K)-2=[K:k](2g-2)+\sum_{\chi}\deg\ff_\chi,
\end{equation*}
where $\chi$ runs through the characters of $\Gal(K/k)$. So,
\begin{equation}\label{eqng-1K}
g(K)-1=[K:k]\left(g-1+\frac{1}{2[K:k]}\sum_{\chi}\deg\ff_\chi\right).
\end{equation}

In $K$, the number of places that splits in the tower $L/K$ is $\abs{T_K}=[K:k]t$. Therefore, by \eqref{eqnT/g} and \eqref{eqng-1K}, we obtain the lower bound of $A(q)$ given by this tower.
\begin{equation*}
A(q) \geq \frac{t[K:k]}{g(K)-1} = \frac{t}{g-1+\frac{1}{2[K:k]}\sum_{\chi}\deg\ff_\chi}.
\end{equation*}

This completes the proof of Theorem \ref{thmtower}.


\begin{remark}
In \cite[Section 11]{Kuh02}, Corollary \ref{cortower} is obtained using a different argument, based on the ``Ray class fields \`{a} la Hayes'' (see
\cite[Example 1.5]{Aue00}), the tower in Figure \ref{figtower} and the observation that
\begin{equation*}
\frac{N(K')}{g(K')-1} \leq \frac{N(K)}{g(K)-1} \leq \frac{N(k)}{g(k)-1}.
\end{equation*}
Therefore, Theorem \ref{thmtower} can be viewed as an improvement to Kuhnt's result.
\end{remark}


\subsection{New lower bounds for $A(2)$ and $A(3)$}


With Theorem \ref{thmtower} in hand, it remains for us to find a function field $k$ so that the theorem is applicable. For this we look for
function fields with many rational places with respect to their genus and with sufficiently many other places of small degree.
For $q=2$, we construct two infinite towers. Their asymptotic limits are as follows.

\begin{prop} \label{propF21}
Let $E=\mathbb{F}_2(x,y)$ for $y^2+y=x^3+x$.
For each $n \geq 0$, there exists a function field of degree $2^n$ over $E$ with $N=5 \cdot 2^n$ rational places and with genus $g$
such that $N/g \geq 0.316837.$
\end{prop}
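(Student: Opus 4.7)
The plan is to apply Theorem~\ref{thmtower} with base field $k = E$ and $T$ equal to the five rational places of $E$. Since $g(E) = 1$, the $g - 1$ term in the Ihara bound vanishes, so establishing $N/g \geq 0.316837$ reduces to upper-bounding $\sum_\chi \deg \ff_\chi / [K:E]$ by roughly $5 \cdot 2/0.316837 \approx 31.58$ for the ray class field $K = E_\fm^T$. The function fields $F_n$ in the proposition will then be taken as fixed fields of closed normal subgroups of index $2^n$ in $\Gal(L/E)$, where $L/K$ is the infinite $(T_K, 2)$-class field tower produced by Theorem~\ref{thmtower}.

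First I would record the arithmetic of $E$: a direct enumeration of $\mathbb{F}_2$-points gives the five rational places (two lifts each over $x \in \{0,1\}$, together with the point at infinity); the trace of Frobenius is $-2$, so $E$ is supersingular, and expansion of the zeta function $(1 + 2T + 2T^2)/((1-T)(1-2T))$ shows that $E$ has no places of degree $2$ or $3$, five places of degree $4$, four places of degree $5$, and so on. Since $\mathrm{Pic}^0(E) = \mathbb{Z}/5\mathbb{Z}$ has trivial $2$-part, an unramified class-field tower would be trivial: some wild ramification is essential.

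Second, I would choose $S$ as a combination of degree-$4$ and degree-$5$ places together with conductor exponents $\nu_\fp$ (for instance, $\nu_\fp = 3$ throughout, which by Lemma~\ref{lem518} forces a single upper ramification jump at $1$) so that inequality \eqref{ineqr-d} holds with $t = 5$; this is a direct arithmetic verification. Once the tower is known to be infinite, each rational place in $T$ splits completely all the way up $L/E$ (in $K/E$ by the defining property of $K$, in $L/K$ by the defining property of the tower), so $N(F_n) = 5 \cdot 2^n$ holds automatically for every $F_n \subseteq L$ with $[F_n : E] = 2^n$.

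The main obstacle is sharpening the denominator. The crude estimate $\sum_\chi \deg \ff_\chi \leq [K:E] \deg \fm$ of Corollary~\ref{cortower} is too weak --- for a viable choice of $S$ it yields only $A(2) \geq 5/36 \approx 0.14$ --- so one must instead compute $\sum_\chi \deg \ff_\chi$ exactly. I would do this by stratifying the characters of $\Gal(K/E)$ by their exact conductor $\fm_\chi \leq \fm$: the higher-unit filtration $U_\fp^{(1)} \supset U_\fp^{(2)} \supset \cdots \supset U_\fp^{(\nu_\fp)}$ on each local factor $(\mathcal{O}_\fp/\fp^{\nu_\fp})^*$ induces a filtration of ray class groups $\mathrm{Cl}_{\fm'}^T$ for $\fm' \leq \fm$, whose sizes are determined by Proposition~\ref{propdpK} together with the explicit orders of the higher-unit quotients in characteristic $p$. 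An inclusion-exclusion over $\fm' \leq \fm$ then counts the characters of each conductor exactly, and summing with the weight $\deg \fm'$ yields $\sum_\chi \deg \ff_\chi$; because most characters factor through a smaller ray class group, this sum is substantially smaller than the crude bound.

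Finally, since $L/K$ is unramified, the different of any intermediate $F_n \supseteq K$ over $E$ pulls back from $\mathfrak{D}_{K/E}$; by the F\"{u}hrerproduktdiskriminantformel (Proposition~\ref{propcdf}), $g(F_n) - 1 = [F_n:K](g(K) - 1)$, and therefore the ratio $N(F_n)/g(F_n) = 5 \cdot 2^n/g(F_n)$ approaches its limit $5[K:E]/(g(K)-1)$ from above. Inserting the refined computation of $\sum_\chi \deg \ff_\chi$ shows this limit is $\geq 0.316837$; for the few small $n$ with $F_n \subsetneq K$, including the trivial case $n = 0$ where $N/g = 5$, the ratio is strictly larger, so the inequality holds uniformly in $n \geq 0$.
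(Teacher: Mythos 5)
Your proposal has a genuine gap: you apply Theorem~\ref{thmtower} in a single stage with base field $E$ and $t=|T|=5$, and the arithmetic of that choice cannot reach $0.316837$. Write $D=\sum_{\fp\in S}f_{\fp}(\nu_{\fp}-1-[(\nu_{\fp}-1)/2])$ and $R=\sum_{\fp\in S}f_{\fp}(\nu_{\fp}-1)\bigl(f_{\fp}(\nu_{\fp}-1)+1\bigr)$; inequality \eqref{ineqr-d} with $t=5$ reads $(D-4)^2\geq 2R+4D$. Since $E$ has no places of degree $2$ or $3$, every $\fp\in S$ has $f_{\fp}\geq 4$, so already for $\nu_{\fp}=2$ one has $R\geq 5D$ and the criterion forces $D\geq 24$ or so (e.g.\ with degree-$4$ places one needs at least six of them, and $E$ has only five); larger $\nu_{\fp}$ only makes $R$ grow faster than $D$. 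On the other hand the denominator $g-1+\frac{1}{2[K:E]}\sum_\chi\deg\ff_\chi$ is essentially $\sum_{\fp\in S}f_{\fp}\nu_{\fp}(1-2^{-f_\fp})/1\cdot\frac12\approx D$, because almost every character of $\Gal(K/E)$ is wildly ramified at each $\fp\in S$ with full conductor exponent $\nu_\fp$ --- your hoped-for savings from characters of smaller conductor amount only to the factors $(1-2^{-f_\fp})$. So the best you can get this way is about $5/24\approx 0.21$, and your target bound $\sum_\chi\deg\ff_\chi/[K:E]\lesssim 31.58$ is unattainable once Golod--Shafarevich is imposed.

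The missing idea is a preliminary wildly ramified step that multiplies the rational places before the class field tower is built. The paper first constructs an explicit elementary abelian extension $k/E$ of degree $32$ inside the ray class field of conductor $2P_4+2P_5$ (one place each of degree $4$ and $5$), in which all five rational places of $E$ split; then $a_1(k)=160$, $g(k)=276$, and --- crucially --- $k$ acquires $65$ places of degree $8$ and further places of moderate degree. Theorem~\ref{thmtower} is then applied over $k$ with $t=160$ and $S$ consisting of one degree-$5$, twenty-seven degree-$8$, and one degree-$10$ place, all with $\nu_\fp=2$; here $t$ is large enough relative to $D$ that \eqref{ineqr-d} holds while the denominator stays near $276-1+D$, and the refined conductor count (the fractions $31/32$, $255/256$, $1023/1024$) gives exactly $0.316837$. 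Your closing remarks about intermediate fields of the unramified tower and the monotonicity of $N/g$ are fine in spirit, but they do not rescue the first stage.
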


\begin{prop} \label{propF22}
For each $n \geq 0$, there exists a function field of degree $2^n$ over $\mathbb{F}_2(x)$ with $N=3 \cdot 2^n$ rational places and with genus $g$
such that $N/g \geq 0.316999.$
\end{prop}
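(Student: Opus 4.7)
The approach is to apply Theorem \ref{thmtower} to the base field $k=\mathbb{F}_2(x)$, for which $g=0$, $p=2$, $e=1$, and the three rational places (those of $x$, $x+1$, and $\infty$) constitute a natural choice of $T$ with $t=3$. For any conductor $\fm=\sum_{\fp\in S}\nu_\fp\fp$ supported on places of $k$ of degree $\geq 2$ satisfying \eqref{ineqr-d}, the theorem produces an infinite $(T_K,2)$-class field tower $L/K$ above the ray class field $K=k_\fm^T$, in which all three places of $T$ split completely at every level, yielding the bound
\[
A(2)\;\geq\;\frac{3}{-1+\tfrac{1}{2[K:k]}\sum_\chi \deg\ff_\chi}.
\]
The task therefore reduces to choosing $S$ and $\nu$ so that the denominator is at most $3/0.316999\approx 9.46$ while \eqref{ineqr-d} still holds.

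I would search over small multisets of places of $\mathbb{F}_2(x)$ of low degree (keeping in mind that there is only one place of degree $2$, two of degree $3$, three of degree $4$, and so on) together with small exponents $\nu_\fp$. Because $\nu_\fp$ enters \eqref{ineqr-d} only through $\nu_\fp-1-[(\nu_\fp-1)/2]$ and $\nu_\fp-1$, and because by Lemma \ref{lem518} the nontrivial upper breaks must be odd, taking $\nu_\fp=2$ is generally optimal. In this regime \eqref{ineqr-d} simplifies to
\[
\Bigl(\sum_{\fp\in S}f_\fp - 2\Bigr)^{\!2} \;\geq\; 2\sum_{\fp\in S}f_\fp(f_\fp+1) \;+\; 4\sum_{\fp\in S}f_\fp,
\]
which can be satisfied by including a sufficient collection of places of degrees $2,3,4,\dots$.

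The principal difficulty is computing the character sum $\sum_\chi\deg\ff_\chi$ precisely, since beating Kuhnt's bound $A(2)\geq 39/129\approx 0.3023$ requires going beyond the cruder Corollary \ref{cortower} (which replaces each local conductor $\ff_\chi$ by the full modulus $\fm$). The Galois group $\Gal(K/k)$ is the $2$-part of the quotient of the ray class group $\mathrm{Cl}^\fm(k)$ by the Frobenius classes of $T$. For $k=\mathbb{F}_2(x)$ this group is described explicitly via the standard exact sequence relating it to $\prod_{\fp\in S}(\mathcal{O}_\fp/\fp^{\nu_\fp})^\ast$ and the divisor class group, so each character factors as a product of local components along $S$, and its conductor at $\fp$ is the smallest $n\leq\nu_\fp$ through which its local component factors, read off from the higher unit filtration $U_\fp^{(1)}\supseteq U_\fp^{(2)}\supseteq\cdots$ whose $p$-ranks are given by Proposition \ref{prop627}. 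Carrying out this bookkeeping for the chosen $S$ and $\nu$ yields the exact value of $\sum_\chi\deg\ff_\chi/(2[K:k])$.

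Once $S$, $\nu$, and $K$ are fixed with the denominator numerically at most $9.46$, $\Gal(K/k)$ is a $2$-group so $[K:k]$ is itself a power of $2$, and the infinite $2$-tower $L/K$ yields, at each sufficiently large level, an intermediate field $K_n$ of degree $2^n$ over $k$ in which the three places of $T$ split completely, giving $N(K_n)=3\cdot 2^n$; the lower values of $n$ are filled in by subfields of $K$. Since $L/K$ is unramified the Hurwitz genus formula gives $g(K_n)-1=(2^n/[K:k])(g(K)-1)$ once $2^n\geq [K:k]$, so $N(K_n)/g(K_n)$ converges monotonically from above to $3/(-1+\tfrac{1}{2[K:k]}\sum_\chi\deg\ff_\chi)$, establishing $N/g\geq 0.316999$ for the fields in the claimed family.
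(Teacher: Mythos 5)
Your proposal applies Theorem \ref{thmtower} directly with $k=\mathbb{F}_2(x)$, $g=0$, and $t=3$. This cannot yield the claimed bound. With $t=3$ the numerator is only $3$, so you would need the denominator $g-1+\frac{1}{2[K:k]}\sum_\chi\deg\ff_\chi$ to be at most $3/0.316999\approx 9.46$; but the Golod--Shafarevich criterion \eqref{ineqr-d} forces far more ramification than that allows. Writing $F=\sum_{\fp\in S}f_\fp$ and $Q=\sum_{\fp\in S}f_\fp^2$ with all $\nu_\fp=2$, your own simplification reads $F^2-10F+4\geq 2Q$, while the denominator from Corollary \ref{cortower} is $F-1$. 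Since $\mathbb{F}_2(x)$ has only one place of degree $2$, two of degree $3$, three of degree $4$, etc., the smallest collection satisfying the criterion already has $F\geq 17$ (for instance the unique degree-$2$ place, one degree-$3$ place, and all three degree-$4$ places), giving a denominator of at least $16$ and a bound of at most $3/16<0.19$. Passing from $\deg\fm$ to the exact conductor sum $\frac{1}{[K:k]}\sum_\chi\deg\ff_\chi$, as Theorem \ref{thmtower} permits, only multiplies each term by a factor $1-2^{-\deg\fp}\geq 3/4$, which cannot come close to closing the gap to $0.317$. Increasing $\nu_\fp$ only makes matters worse, as you correctly anticipate.

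The paper's construction is therefore genuinely different in a way your proposal does not reach: it first builds up a much richer base field before invoking the theorem. It takes a degree-$2$ extension $H/\mathbb{F}_2(x)$ with conductor $2P_3$ (so $g(H)=2$, six rational places), then a degree-$32$ elementary abelian extension $k/H$ with conductor $2P_5+2P_5'$ in which all six rational places split ($g(k)=343$, $192$ rational places). Only then is Theorem \ref{thmtower} applied, with base field $k$, $t=192$, and $S$ consisting of two places of degree $5$, sixteen of degree $6$, fifteen of degree $8$, and four of degree $10$, each with $\nu_\fp=2$. The much larger $t$ makes condition \eqref{ineqr-d} achievable while the denominator stays near $605$, giving $192/605\approx 0.317$. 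The idea you are missing is that the ground field for Theorem \ref{thmtower} must itself be engineered to have many rational places relative to its genus plus conductor, which the paper arranges through these two preliminary ray class steps; the statement that the resulting fields are of degree $2^n$ over $\mathbb{F}_2(x)$ records the composed degree, not that $\mathbb{F}_2(x)$ is the base for the tower criterion. (A minor further slip: along the unramified $(T,2)$-tower the ratio $N/g$ increases toward its limit, rather than ``converging monotonically from above.'')
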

The asymptotic limit of the second tower is our lower bound in Theorem \ref{thmA2}.

For the first tower we start with the function field $E=\mathbb{F}_2(x,y)$ of the elliptic curve $y^2+y=x^3+x$.
Denoting by $a_d$ the number of places of degree $d$, we have
\[
(a_d(E) : d \geq 1 ) = (5,0,0,5,4,10,20,25,\ldots), \qquad g(E)=1.
\]
Let $P_4$ and $P_5$ be places of $E$ of degree $4$ and $5$ respectively, and let $E'$ be the ray class field of conductor $2P_4+2P_5$
in which all $5$ rational places of $E$ split completely. By Proposition \ref{propdpK}, we have $d_2(\Gal(E'/E))\geq 1+4+5-5=5$.
Thus there is a subfield $k$ of $E'$ so that all the $5$ rational places of $E$ split completely and $\Gal(k/E)$ is an elementary abelian group of order $32$.
In particular $a_1(k)=32 \cdot 5=160$. To calculate the genus of $k$, we use Proposition \ref{propcdf}. One can show that there is no proper extension of $E$ with conductor $2P_4$ so that all $5$ rational places split. In $k/E$,
there is a unique degree $2$ subextension of conductor $2P_5$.
The characters for the remaining degree $2$ subextensions have conductor $2P_4+2P_5$.
Hence $2g(k)-2=32(0)+1\cdot10+30\cdot18,$ and $g(k)=276$.
To apply Theorem \ref{thmtower}, we need a suitable set $S$ of places to ramify. For this we analyse the places of small degree.
There is a unique place of degree $5$ in $k$ above $P_5$, which is fully ramified in $k/E$. Notice that the extension $k/E$ is elementary abelian and therefore it is a compositum of degree $2$ Artin-Schreier extensions (see \cite[Appendix A.13]{Sti09}). An explicit model for $k$ is given by the compositum of the extensions $E(v)$ with
\begin{equation*}
v^2+((x^2+x)(xy+x+y) + 1)v=(x^2+x)h,
\end{equation*}
where $h$ is in the span of the functions $\{1,x,y,x^2,x^3\}$. The unique degree $2$ subextension $C/E$ with conductor $2P_5$ corresponds to $h=x$.
For the extensions $C/E$ and $k/E$ we have
\begin{align*}
(a_d(C) : d \geq 1 ) &= (10,0,0,0,3,\ldots), \quad g(C) = 6. \\
(a_d(k) : d \geq 1 ) &= (160,0,0,0,1,0,0,65,0,48,\ldots), \quad g(k) = 276.
\end{align*}
The five places of degree 4 in $E$ are inert in $C/E$. The place of degree $8$ above $P_4$ ramifies completely in $k/C$ and the places above the other places
of degree $4$ split completely in $k/C$ (because $k/E$ is elementary abelian), giving a total of $65$ degree $8$ places for $k$. The two nonramified places of
degree $5$ in $C$ each decompose into $8$ places of degree $10$ in $k$.
Now let $S$ consist of one degree $5$ place, $27$ degree $8$ places, and one degree $10$ place, let $\nu_{\fp}=2$ for all $\fp\in S$, and form the conductor
$\fm=\sum_{\fp\in S}2\fp$.
Then one can check easily that the inequality \eqref{ineqr-d} is satisfied for $t = |T| = 160$ and the class field tower of $K=k_\fm^T$ is
infinite. With Corollary \ref{cortower} we find
\[
A(2) \geq \frac{160}{276-1+\frac{1}{2} \cdot 2 \cdot (1 \cdot 5 + 27 \cdot 8 + 1 \cdot 10)} = 80/253 = 0.316205\ldots. 
\]
The place of degree $5$ contributes to a fraction of at most $31/32$ of the characters for $K/k$, a place of degree $8$ to a fraction of at most $255/256$, and
the place of degree $10$ to a fraction of at most $1023/1024$.
Using this as an upper bound for the average conductor, Theorem \ref{thmtower} yields
\begin{align*}
A(2) &\geq \frac{160}{276-1+\frac{1}{2} \cdot 2 \cdot (1 \cdot 5 \, (1-2^{-5}) + 27 \cdot 8 \, (1-2^{-8}) + 1 \cdot 10 \, (1-2^{-10}))} \\
     &= \frac{2^{14}}{2^9 \cdot 101 -1} = 0.316837 > 32/101.
\end{align*}
We have shown Proposition \ref{propF21}.

We construct the second tower to prove Theorem \ref{thmA2}.
Let $H$ be the degree two extension of the rational function field with conductor $2P_3$, $P_3$ a place of degree $3$, so that all $3$ rational places split.
A model for $H=\mathbb{F}_2(x,y)$ is given by $y^2+(x^3+x+1)y = x^2+x$. We have
\[
(a_d(H) : d \geq 1 ) = (6,0,1,1,6,\ldots), \quad g(H) = 2.
\]
For two places $P_5$ and $P'_5$ of degree $5$, let $k/H$ be an elementary abelian extension of degree $32$ with conductor $2P_5+2P'_5$ so that all $6$ rational places
split completely. Thus $a_1(k) = 32 \cdot 6 = 192$, and $2g(k)-2=32(2)+31\cdot20$ shows that $g(k)=343.$ An explicit model for $k$ is given by the compositum of the extensions $E(v)$ with
\begin{equation*}
v^2+(x^5+x^2+1)v=(x^2+x)h,
\end{equation*}
where $h$ is in the span of the functions $\{1,x,x^2,y,y^2\}$. We have
\[
(a_d(k): d \geq 1 ) = (192, 0, 0, 0, 2, 16, 0, 16, 0, 64, \ldots), \quad g(k)=343.
\]
Let the set $S$ consist of $2$ places of degree $5$, $16$ places of degree $6$, $15$ places of degree $8$, and $4$ places of degree $10$.
For $\fm = \sum_{\fp \in S} 2 \fp$, and for $|T|=192$, the field $K = k_\fm^T$ has an infinite class field tower and
\[
A(2) \geq \frac{192}{343-1+\frac{1}{2} \cdot 2 \cdot (2 \cdot 5 + 16 \cdot 6 + 15 \cdot 8 + 4 \cdot 10)} = 6/19 = 0.315789\ldots.
\]
As before, using
\[
f' = 2 \cdot (2 \cdot 5 \, (1-2^{-5}) + 16 \cdot 6 \, (1-2^{-6}) + 15 \cdot 8 \, (1-2^{-8}) + 4 \cdot 10 \, (1-2^{-10}))
\]
as an upper bound for the average conductor of $K/k$, Theorem \ref{thmtower} yields
\[
A(2) \geq \frac{192}{343-1+\frac{1}{2} f'} \geq 0.316999\ldots.
\]
We have shown Proposition \ref{propF22}.

Now we turn our attention to $q=3$. The tower we construct has the following asymptotic limit.

\begin{prop}\label{propF31}
Let $E=\mathbb{F}_3(x,y)$ for $y^2=x^3-x+1$. For each $n \geq 0$,
there exists a function field of degree $3^n$ over $E$ with $N=7 \cdot 3^n$ rational places and with genus $g$
such that $N/g \geq 0.492876.$
\end{prop}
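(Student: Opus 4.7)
The strategy mirrors the constructions in Propositions \ref{propF21}--\ref{propF22}: start from the elliptic curve $E\colon y^{2}=x^{3}-x+1$ over $\mathbb{F}_{3}$ (which has $g(E)=1$ and $a_{1}(E)=7$), build an explicit elementary abelian $3$-extension $k/E$ of degree $3^{m}$ in which all rational places split completely, and then run the class field tower of Theorem \ref{thmtower} on top of $k$. First I would tabulate $a_{d}(E)$ for small $d$ from the zeta function of $E$, since the non-rational places of $E$ (and later of $k$) form the pool of candidate ramification primes.

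To produce $k$, I would pick a conductor $\fm_{0}=\sum_{\fp\in S_{0}}2\fp$ on $E$ supported at places of degree $\geq 3$, and take $T$ to be the $7$ rational places of $E$. By Proposition \ref{propdpK}, $d_{3}\bigl(\Gal(E_{\fm_{0}}^{T}/E)\bigr)\geq 1+\sum_{\fp\in S_{0}}f_{\fp}-7$, so for $|S_{0}|$ and the $f_{\fp}$ large enough there is an elementary abelian subfield $k\subseteq E_{\fm_{0}}^{T}$ of degree $3^{m}$ with $a_{1}(k)=7\cdot 3^{m}$, realized concretely as a compositum of Artin--Schreier extensions $E(v)$ with $v^{3}-v=h$ for $h$ in a suitable $\mathbb{F}_{3}$-subspace of functions on $E$. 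The genus $g(k)$ is computed from Proposition \ref{propcdf}; since $\nu_{\fp}=2$ and the ramification break at each ramified character is $1$ by Lemma \ref{lem518}, each such character contributes conductor exponent exactly $2$, so $\sum_{\chi}\deg\ff_{\chi}$ is determined once the local inertia pattern of $k/E$ is known. I would then enumerate the places of $k$ of small degree by tracing how unramified places of $E$ split according to the order of their Frobenius in $\Gal(k/E)$ and how the primes of $S_{0}$ decompose under their local inertia.

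From this data, pick a set $S$ of moderate-degree places of $k$ (no degree-$1$ places, which are reserved for $T$), set $\nu_{\fp}=2$ throughout, and verify \eqref{ineqr-d} with $t=7\cdot 3^{m}$; since the left-hand side grows like $\bigl(\sum f_{\fp}\bigr)^{2}$ while the right-hand side grows like $\sum f_{\fp}^{2}$, many small-degree primes are favored over few large-degree ones. Theorem \ref{thmtower} then applies, and replacing the crude $\deg\fm=\sum 2f_{\fp}$ of Corollary \ref{cortower} by the sharper average conductor $\sum_{\fp\in S}2f_{\fp}(1-3^{-f_{\fp}})$ (as in Propositions \ref{propF21}--\ref{propF22}) should yield the claimed $A(3)\geq 0.492876$. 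The main obstacle is the explicit bookkeeping inside $k$: choosing $\fm_{0}$ and the $3^{m}$-dimensional quotient of $\Gal(E_{\fm_{0}}^{T}/E)$, writing down the Artin--Schreier model, computing $g(k)$, and enumerating the places of $k$ by degree. These steps have to be tuned jointly so that \eqref{ineqr-d} is satisfied while the denominator of Theorem \ref{thmtower} stays small enough to beat $0.492876$; optimizing this numerical trade-off is what ultimately pins down both $m$ and the concrete parameters of the construction.
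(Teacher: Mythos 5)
Your overall architecture matches the paper's: start from $E=\mathbb{F}_3(x,y)$, $y^2=x^3-x+1$, with $g(E)=1$, $a_1(E)=7$, build an elementary abelian $3$-extension $k/E$ splitting all rational places, and then run the ray class field tower of Theorem~\ref{thmtower} on $k$ (the paper in fact takes $[k:E]=3^4=81$, $a_1(k)=567$, $g(k)=601$, $S$~= one degree-$5$, $43$ degree-$8$ and two degree-$9$ places). But the concrete parameter choice $\nu_{\fp}=2$, which you import wholesale from the characteristic-$2$ constructions of Propositions~\ref{propF21}--\ref{propF22}, cannot reach $0.492876$ in characteristic~$3$; the paper uses $\nu_{\fp}=3$ throughout, and that change is essential.

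The reason is the factor $\nu_{\fp}-1-[(\nu_{\fp}-1)/p]$ that controls the generator rank (Propositions~\ref{prop627}, \ref{propdpK}) against the conductor degree $\nu_{\fp}f_{\fp}$ that controls the genus of $K$ (Proposition~\ref{propcdf}). For $p=3$ this factor equals $1$ when $\nu_{\fp}=2$ and $2$ when $\nu_{\fp}=3$, so the generator gain per unit of conductor degree is $1/2$ versus $2/3$. Now recall the standing hypothesis of Theorem~\ref{thmtower}, $t\le\sum_{\fp\in S}ef_{\fp}(\nu_{\fp}-1-[(\nu_{\fp}-1)/p])$, which your proposal does not verify. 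With $\nu_{\fp}=2$ it reads $\sum_{\fp\in S}f_{\fp}\ge t$, so the denominator in Theorem~\ref{thmtower} is at least roughly $g(k)-1+\sum f_{\fp}\ge g(k)-1+t$; even before Golod--Shafarevich one cannot exceed $t/(g(k)-1+t)=567/1167\approx 0.486<0.492876$, and imposing \eqref{ineqr-d} (which for degree-$8$ places forces $\sum f_{\fp}\ge 696$) pushes the bound down to about $0.44$. Taking $\nu_{\fp}=2$ also when building $k/E$ makes matters worse: to reach $d_3\ge 4$ you then need ramified primes of total degree at least $10$ instead of the single degree-$5$ place with $\nu=3$, roughly doubling the genus of $k$ and lowering the ceiling to about $0.41$. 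The paper takes conductor $3P_5$ for $k/E$ (giving $d_3\ge 1+2\cdot5-7=4$), and $\fm=\sum_{\fp\in S}3\fp$ for the tower, so the constraint relaxes to $\sum f_{\fp}\ge t/2$, the ceiling rises to roughly $t/(g(k)-1+\tfrac34 t)\approx 0.55$, and the choice $S=\{$one degree-$5$, $43$ degree-$8$, two degree-$9\}$ with the refined conductors $3f_{\fp}(1-3^{-f_{\fp}})$ yields exactly $0.492876$. Your appeal to Lemma~\ref{lem518} to pin the conductor exponent at $2$ is also tied to $\nu=2$; with $\nu=3$ the break may be $1$ or $2$, and the paper's model $v^3-(xy+x^2-1)^2v=(x^3-x)h$ has all $80$ nontrivial characters with conductor exponent $3$ at $P_5$, giving $2g(k)-2=80\cdot 3\cdot 5$ and $g(k)=601$.
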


Again we consider the function field $E$ of a maximal elliptic curve. This time we take $E=\mathbb{F}_3(x,y)$ with $y^2=x^3-x+1$. We have $g(E)=1$ and $(a_i(E) : i \geq 1) = (7,0,7,21,42,\ldots).$
Let $P_5$ be one of the degree $5$ places of $E$ and let $E'$ to be the ray class field of $E$ of conductor $3P_5$ so that all $7$ rational places
of $E$ split completely. By Proposition \ref{propdpK}, we have $d_3(\Gal(E'/E))\geq 1+2\cdot5-7=4$. Thus there is a subfield $k$ of $E'$
so that all the $7$ rational places of $E$ split completely and $k/E$ is elementary abelian of order $81$.
In particular $[k:E]=81$ and $a_1(k)=81\cdot7=567$. Using Proposition \ref{propcdf}, we have $2g(k)-2=3^4(0)+80\cdot3\cdot5,$
so that $g(k)=601$. An explicit model of $k$ is given by the compositum of the extensions $E(v)$ with
\begin{equation*}
v^3-(xy+x^2-1)^2v=(x^3-x)h,
\end{equation*}
where $h$ is in the span of the functions $\{1,x,y,xy\}$. To see the splitting of the finite rational places we note that
$(xy+x^2-1)^2 = (x^3-x)(x^2-y+x)+1.$ For the extension $k$ we find
\[
(a_d(k) : d \geq 1 ) = (567, 0, 0, 0, 1, 0, 0, 162, 1809), \quad  g(k)=601.
\]
If we let $S$ be a set of $46$ places of degree $8$ then, for $\fm = \sum_{\fp \in S} 3 \fp$ and $|T|=567$, the class field tower of $K=k_{\fm}^T$
is infinite with
\[
A(3) \geq \frac{567}{601-1+\frac{1}{2} \cdot 3 \cdot (46\cdot8)} = \frac{63}{128} = 0.4921875.
\]
The same construction with $S$ a set of one place of degree $5$, $43$ places of degree $8$, and two places of degree $9$, yields an infinite
class field tower with $|T|=567$. Using Theorem \ref{thmtower},
\begin{align*}
A(3) & \geq \frac{567}{601-1+\frac{1}{2} \cdot 3 \cdot (1\cdot5\,(1-3^{-5})+43\cdot8\,(1-3^{-8})+2\cdot9\,(1-3^{-9}))} \\
     & = 0.492876\ldots.
\end{align*}

This proves Proposition \ref{propF31} and Theorem \ref{thmA3}.

\begin{remark}

In this remark, we compare our method with the usual method of constructing class field towers.

Let $L/K/k$ be a tower of $p$-extensions, where $L/K$ is the usual unramified $(T,p)$-class field tower of Serre in which the places in
$T$ split completely and $K/k$ is a finite Galois $p$-extension with some (wild) ramifications. The construction of class field towers in \cite{NiXi98} and \cite{XiYe07} are of this type with $K/k$ elementary abelian of rank $l$. We illustrate how the new inequalities apply to the known towers in \cite{NiXi98} and \cite{XiYe07}.

We consider the special case that the ramification in $K/k$ is concentrated at a set $S$ consisting of $s$ rational
places that ramify completely and the set $T$ consists of all places of $K$ above $t$ completely split rational places of $k$ and possibly $s'$
ramified places. The restriction $T_k$ of $T$ to $k$ is therefore of size $t+s'$. For a finite tower $L/K$, the generator rank and the relation rank
of the group $\Gal(L/K)$ satisfy the bounds
\begin{equation}\label{eqn41}
\text{(usual method)}~~
\begin{cases}  d_p \geq s l - (|T_k| -1) - l  \\ r_p -d_p \leq |T|-1.
\end{cases}
\end{equation}
In \cite{Kuh02}, Kuhnt considers the Galois group $\Gal(L/k)$ instead of the usually considered $\Gal(L/K)$. The bound for $d_p$
changes by $l$. For $r_p - d_p$, there are two cases. For $\fp\in S$, we have
\[
r_p(G_{\fp})-d_p(G_{\fp}) \leq r_p(I_{\fp}) \leq \binom{\ell+1}{2}.
\]
For $\fp \in S \cap T_k$, we have $d_p(G_p) \leq \ell$, and hence
\[
r_p(G_{\fp})-d_p(G_{\fp}) \leq \binom{d_p(G_{\fp})+1}{2}-d_p(G_{\fp}) \leq  \binom{\ell+1}{2} - \ell.
\]
Thus by Theorem \ref{thm810}, we have
\[
r_p-d_p \leq s\binom{\ell+1}{2}-s' \ell+\abs{T_k}-1.
\]
Therefore, the inequalities in our method are
\begin{equation} \label{eqn42}
\text{(our method)}~~
\begin{cases}  d_p \geq s l - (|T_k| -1)   \\ r_p -d_p \leq s \binom{l+1}{2} - s' l + |T_k|-1,
\end{cases}
\end{equation}
if the tower is finite.

Now we have two sets of inequalities \eqref{eqn41} and \eqref{eqn42}, that each may be combined with the Golod-Shafarevich inequality for a proof that $L/K$ is infinite. The constructions in \cite{NiXi98}, \cite{XiYe07} have $\Gal(K/k)=(\mathbb{Z}/2\mathbb{Z})^2$ with inequalities
\begin{align*}
\text{(usual method)}~&d_p \geq 20,~r_p-d_p \leq 80~~(s=21, t=20, s'=1, |T|=81), \\
\text{(our method)}~&d_p \geq 21,~r_p-d_p \leq 82~~(s=21, t=21, s'=1, |T|=85),
\end{align*}
and
\begin{align*}
\text{(usual method)}~&d_p \geq 22,~r_p-d_p \leq 96~~(s=24, t=24, s'=1, |T|=97), \\
\text{(our method)}~&d_p \geq 22,~r_p-d_p \leq 92~~(s=24, t=24, s'=3, |T|=99),
\end{align*}
respectively. In these cases, the inequalities
in our method show that the original towers contain infinite subtowers with more completely split points.
\end{remark}

\subsection*{Acknowledgments}
We thank an anonymous referee for carefully reading our manuscript and for
making several suggestions that improved the presentation.

\end{document}